\documentclass[11pt]{article}
\usepackage{amsmath, amssymb, amscd, amsthm, amsfonts}
\usepackage{graphicx}
\usepackage{hyperref}
\usepackage{color}
\usepackage{dsfont}

\def\be{\begin{eqnarray}}
\def\ee{\end{eqnarray}}
\def\b*{\begin{eqnarray*}}
\def\e*{\end{eqnarray*}}


%
\newcommand{\rmi}{{\rm (i)$\>\>$}}
\newcommand{\rmii}{{\rm (ii)$\>\>$}}
\newcommand{\rmiii}{{\rm (iii)$\>\>$}}


\def \E{\mathbb{E}}
\def \F{\mathbb{F}}

\def \M{\mathbb{M}}
\def \N{\mathbb{N}}

\def \P{\mathbb{P}}
\def \Q{\mathbb{Q}}
\def \R{\mathbb{R}}


\def\Cc{{\cal C}}
\def\Dc{{\cal D}}

\def\Fc{{\cal F}}

\def\Lc{{\cal L}}

\def\Nc{{\cal N}}

\def\Pc{{\cal P}}

\def\Sc{{\cal S}}



\def \de {\delta}
\def \eps {\varepsilon}

\def \dX{\underline{X}}

\def \dL{\underline{L}}
\def \dLa{\underline{\Lambda}}

\def \Pl{\Pc_{\le}}
\def \hr{\hat{d}}
\def \dS{\underline{S}}

\def \bin{{\rm Bin}}

                    
\def \sb {\subset}

\def \d{\mathrm{d}}


\def \0{\bf 0}

\oddsidemargin 0pt
\evensidemargin 0pt
\marginparwidth 40pt
\marginparsep 10pt
\topmargin -20pt
\headsep 10pt
\textheight 8.7in
\textwidth 6.65in
\linespread{1.2}

\title{Systemic robustness: a mean-field particle system approach}
\author{Erhan Bayraktar\footnote{Department of Mathematics, University of Michigan. Email: erhan@umich.edu. E. Bayraktar is partially supported by the National Science Foundation under grant DMS-2106556 and by the Susan M. Smith chair.} \and Gaoyue Guo\footnote{Université Paris-Saclay, CentraleSupélec, MICS and CNRS FR-3487. Email: gaoyue.guo@centralesupelec.fr.} \and Wenpin Tang\footnote{Department of Industrial Engineering and Operations Research, Columbia University. Email: wt2319@columbia.edu. W. Tang gratefully acknowledges financial support through NSF grants DMS-2113779 and DMS-2206038, and through a start-up grant at Columbia University.} \and Yuming Paul Zhang\footnote{Department of Mathematics, Auburn University. Email: paulzhangyuming@gmail.com.}}
\date{\today}

\newtheorem{theorem}{Theorem}
\newtheorem{lemma}[theorem]{Lemma}
\newtheorem{proposition}[theorem]{Proposition}

\newtheorem{assumption}[theorem]{Assumption}

\begin{document}

\maketitle

\begin{abstract}
This paper is concerned with the problem of budget control in a large particle system 
modeled by stochastic differential equations involving hitting times,
which arises from considerations of systemic risk in a regional financial network. 
Motivated by Tang and Tsai \cite{{TT18}} (Ann. Probab., 46 (2018), pp. 1597--1650),
we focus on the number or proportion of surviving entities that never default to measure the systemic robustness. 
First we show that both the mean-field particle system and its limit McKean-Vlasov equation are well-posed 
by virtue of the notion of minimal solutions.
We then establish a connection between the proportion of surviving entities in the large particle system
and the probability of default in the McKean-Vlasov equation
as the size of the interacting particle system $N$ tends to infinity. 
Finally, we study the asymptotic efficiency of budget control in different economy regimes:
the expected number of surviving entities is of constant order in a negative economy;
it is of order $\sqrt{N}$ in a neutral economy;
and it is of order $N$ in a positive economy where the budget's effect is negligible. 
\end{abstract}

\textit{Key words:} Budget control, drifted Brownian motion, hitting times, interacting particle systems, large system limits, McKean-Vlasov equation, mean-field interactions, systemic risk. 

\medskip
\textit{AMS 2010 Mathematics Subject Classification:} 35K10, 60H30, 60K35, 91G80.

\section{Introduction}
\label{sec:intro}

\quad Systemic risk in financial markets has emerged as a major research topic since the 2008 financial crisis, see e.g. \cite{FL13}.
It refers to the risk that small losses and defaults can escalate to cause an event affecting large parts of the financial industry. 
The existing literature on modeling systemic risk relies on network models \cite{ACM13, AMS13, BGGGS12, CLY16, Glass16, LBS09, MA10}, quantitative risk measures \cite{APPR17, CIM13}, and particle systems \cite{CFS15, DIRT2015a, DIRT2015b, GSSS15, HLS2019, NS2019, DNS2019, CRS2020, CCS2020}. 

This paper aims to analyze systemic risk in a large financial network and adopts the particle system approach with mean-field interactions which is intimately to Stefan's problem (and its variants), see e.g. \cite{ HLS2019, NS2019, CRS2020, BGTZ20, CCS2020}.

Consider an interconnected financial system of $N$ homogeneous banks,
and let $X^{N,1}_t, \ldots, X^{N,N}_t$ be their respective capital levels at time $t \ge 0$.
A bank becomes insolvent if its capital drops below some threshold, 
which is set to be zero without loss of generality.
In the absence of defaults, 
the capital processes $(X_t^{N,i}, \, t \ge 0)$ are supposed to follow
the stochastic differential equations (SDEs): for $i=1,\ldots, N$,
\be
\label{eq:org}
d X^{N,i}_t =  \beta \, d t + d B^i_t, \quad t \ge 0,
\ee
where $\beta\in\R$ stands for the regional economy (see e.g. \cite{HG20}),
and $B^1, \ldots, B^N$ are independent Brownian motions. 

If $k\ge 1$ banks default simultaneously at some time,
we assume that the capital of each remaining bank suffers an immediate loss equal to $G(k/N)$,
which may cause further defaults. 
Here $G:[-1,1]\to\R$\footnote{The existing research focuses on the case $\beta=0$ and $G(x)=\alpha x$ for $\alpha>0$, while \cite{NS2019} considers the alternative case $G(x)=-\alpha\log(1-x)$ for $\alpha > 0$.} is  a continuous and non-decreasing function such that  $G(0)=0$.
After the default event, the remaining capital processes continue to follow the SDEs \eqref{eq:org}
until one of them hits zero, and so on. 
Rephrasing in mathematical language, the capitals evolve as follows:
for $i=1,\ldots, N$,
\be
\label{dyn:particle}
X^{N,i}_t=Z^{N,i}+\beta t +B^i_t - G(L^N_t) &\mbox{and}& L^N_t := \frac{1}{N}\sum_{i=1}^N\mathds{1}_{\{\tau^N_i\le t\}},\quad t\ge 0, 
\ee
where $Z^{N,1},\ldots, Z^{N,N}$ are the initial capitals as i.i.d. random variables that are independent of Brownian motions $B^1,\ldots, B^N$,
and $\tau^N_i := \inf\{t\ge 0: X^{N,i}_t\le 0\}$ is the default time of bank $i$.
Here $L_t^N$ denotes the fraction of insolvent banks up to and including time $t$\footnote{From the practical viewpoint, we are interested in the evolution of $X^{N,i}$ only until time $\tau^N_i$, i.e. $(X^{N,i}_t, \, 0 \le t \le \tau^N_i)$, while we let the process $X^{N,i}$  diffuse freely after its default time for technical purposes.}.
Letting $N\to\infty$, we get formally the McKean-Vlasov limit as follows:
\be
\label{dyn:lim}
 X_t=Z+\beta t +B_t -  G(\Lambda_t) &\mbox{and}& \Lambda_t :=\P(\tau\le t),\quad t\ge 0, 
 \ee
where $Z$ is a random variable (distributed as $Z^{N,1}$),
$B$ is an independent Brownian motion,
and $\tau := \inf\{t\ge 0: X_t\le 0\}$. 
It is known that the case $G(x)=\alpha x$ for $\alpha>0$ provides a probabilistic interpretation
of the supercooled Stefan problem and has given rise to a rich body of literature,
see Delarue et al. \cite{DIRT2015a, DIRT2015b, DNS2019}, Hambly et al. \cite{HLS2019},
Cuchiero et al. \cite{CRS2020, CCS2020}, and Bayraktar et al. \cite{BGTZ20}.

The system transitions between two regimes: 
\begin{itemize}
\item
the {\em well-behaved regime}, in which the system spends most of its time,
and during which the default cascades are very small or do not appear at all;
\item
the {\em systemic crisis regime}, which occurs rarely, 
and which is characterized by a large group of banks defaulting in a short period of time. 
\end{itemize}
In our setting, the \emph{systemic failure} occurs at $t_{sys}:=\{t\ge 0: \Delta \Lambda_t:=\Lambda_t-\Lambda_{t-} >0\}$
when a non-negligible fraction of defaults occur in a short period 
and the system passes abruptly from the well-behaved regime to the systemic crisis regime. 
The systemic event has gained significant attention in the wake of financial crisis (see e.g. \cite{DIRT2015a, NS2019}), 
while we focus on the \emph{systemic robustness}, i.e. the proportion of  banks surviving forever. 
For $t\ge 0$, we denote by
\b*
S^{N}_t :=\sum_{i=1}^N\mathds{1}_{\{\tau^N_i>t\}},
\e*
the number of banks that survive until time $t$. 
Thus, $S^{N}_{\infty}$ (number of banks surviving forever) 
or $1-L^{N}_{\infty}$ (proportion of banks surviving forever) 
encodes the system’s robustness.

Our first goal is to study the expected surviving proportion $\E[S^{N}_{\infty}]/N$, 
which boils down to showing under suitable conditions,
\be
\label{eq:lim_frac}
\lim_{N\to\infty} \frac{\E[S^{N}_{\infty}]}{N} =\P(\tau=\infty).
 \ee
For $\beta\le 0$, the identity \eqref{eq:lim_frac} follows from a simple observation. 
As Brownian motion hits each level in finite time with probability one, 
we have for all $t\ge 0$, 
\b*
   X^{N,i}_t\le Z^{N,i} +B^i_t &\mbox{and}&
 X_t\le Z +B_t,  
 \e*
which implies that 
$\mathds{1}_{\{\tau^N_i=\infty\}} \le \mathds{1}_{\{Z^{N,i} +B^i_t>0, \, \forall t\ge 0\}}=0$ 
and  $\mathds{1}_{\{\tau=\infty\}} \le \mathds{1}_{\{Z+B_t>0, \, \forall t\ge 0\}}=0$ with probability one. 
Hence, $\E [S^{N}_{\infty}] = 0 =\P(\tau=\infty)$, and \eqref{eq:lim_frac} holds. 
The case $\beta>0$ is less obvious, and will be treated in Section \ref{sc3}.
Note that both the particle system \eqref{dyn:particle} and the McKean-Vlasov equation \eqref{dyn:lim} 
may have more than one solution (see e.g. \cite[Section 3.1]{DIRT2015b}),
which renders the statement of our desired result \eqref{eq:lim_frac} more subtle.
Among the solutions to \eqref{dyn:particle} and  \eqref{dyn:lim}, we need to choose suitable ones. 
To this end, we adopt the \emph{minimal solution} as in \cite{CRS2020}, 
and more details will be provided in Section \ref{sc22}.

The next problem is to find a feasible way to compute the minimal solution to the McKean-Vlasov equation \eqref{dyn:lim}.
The idea is to approximate it by a sequence of regularized problems. 
Note that 
\b*
\exp\left(-\frac{1}{\eps}\int_0^t (X^{\eps}_s)^-\d s\right)\stackrel{\eps\to 0}{\longrightarrow} \mathds{1}_{\{\tau>t\}} &\Longrightarrow & \E\left[\exp\left(-\frac{1}{\eps}\int_0^t (X^{\eps}_s)^-\d s\right)\right] \stackrel{\eps\to 0}{\longrightarrow}  \Lambda_t.
\e*
For $\eps>0$, we consider the $\varepsilon$-regularized equation:
\be \label{dyn_reg}
X^{\eps}_t= Z + \beta t + B_t  - G\left( 1- \E\left[\exp\left(-\frac{1}{\eps}\int_0^t (X^{\eps}_s)^-\d s\right)\right]\right), \quad t\ge 0.
\ee  
We show in Theorem \ref{thm:reg} that the equation \eqref{dyn_reg} has a unique solution that converges to the minimal solution 
to the McKean-Vlasov equation \eqref{dyn:lim}. 

Finally, we consider how gouvernemental efforts may steady the banking system. Inspired by Amini, Minca and Sulem \cite{AMS13}, see also \cite{CLY16}, where \emph{equity injections} are applied to weaken the propagation of defaults in graph setting, we adopt the approach of particle systems, and analyse how equity injections may improve the number/proportion of banks surviving forever. By considering the corresponding  control problems, we investigate the budget's significance as $N\to\infty$. It is worth noting that, during the preparation of our manuscript, we learnt that Cuchiero, Reisinger and Rigger built in \cite{CCS2020} an alternative control problem on the same underlying framework. More precisely, our paper aims at maximising the expected number/proportion of banks that never default overall injection strategies subject to given resources, and their focus is to find out the minimal budget ensuring a given expected rate of banks surviving up to some finite time $T>0$, where they assume that all the banks have the same injection strategy.

Granted a budget that is normalized to one unit without loss of generality, 
the regulators aim to divide and allocate it among all surviving banks 
in order to maximize the expected number/proportion of banks that survive forever.
An $\F-$progressively measurable process $\phi \equiv  \big((\phi^1_t,\ldots,\phi^N_t), \, t \ge 0 \big)$ taking values in $[0,1]^N$ is called a strategy if 
\b*
\sum_{i=1}^N\phi^i_t \le 1,\quad  t\ge 0,
\e*
where $\F \equiv  \big(\Fc_t:=\sigma(B^1_s,\ldots, B^N_s,  \, s\in [0, t]), \, t \ge 0 \big)$. 
Here $\phi^i_t$ stands for the equity injected to bank $i$ at time $t$, 
and let $\Phi_N$  be the collection of all strategies.
So the capital processes subject to the strategy $\phi \in \Phi_N$ evolve as follows: for $i=1,\ldots, N$
\be\label{dyn:allo_particle}
   X^{\phi, N,i}_t=Z^{N,i} +\int_0^t (\beta+\phi^i_s)\d s +B^i_t - G(L^{\phi,N}_t) &\mbox{and}& L^{\phi,N}_t := \frac{1}{N}\sum_{i=1}^N\mathds{1}_{\{\tau^{\phi,N}_i\le t\}},\quad  t\ge 0, 
\ee
with $\tau^{\phi,N}_i := \inf\{t\ge 0: X^{\phi,N,i}_t\le 0\}$. Accordingly, the number of banks surviving up to time $t$ is 
\b*
S^{\phi,N}_t := \sum_{i=1}^N\mathds{1}_{\{\tau^{\phi,N}_i>t\}}.
\e*
Our goal is to examine the budget's significance in the large $N$ limit.
To be more precise, we show in Theorem \ref{thm:allo} that $\sup_{\phi \in \Phi_N} \E[S_\infty^{\phi, N}]$ 
has different scaling in $N$ depending on whether the regional economy $\beta < 0$, $\beta = 0$ or $\beta > 0$.

\paragraph{A word on notations.}
For ease of presentation, we drop the superscript $N$ when the context is clear, i.e.
\b*
Z^i\equiv Z^{N,i},~ X^i\equiv X^{N,i},~  \tau_i\equiv \tau^{N}_i,~ \mbox{etc.}
\e*
Furthermore, we write when there is no risk of confusion
\b*
X^i\equiv X^{\phi,i}\equiv X^{\phi, N,i},~ \tau_i\equiv \tau^{\phi}_i\equiv \tau^{\phi, N}_i,~ L^{\phi}\equiv L^{\phi,N},~ \mbox{etc.}
\e*

\medskip
The rest of the paper is organized as follows.
We state the main results in Section \ref{sec:result},
and the proofs are given in Sections \ref{sc3}--\ref{sc5}.

 \section{Preliminaries and main results}
 \label{sec:result}
 
\quad We provide measure theoretical background in Section \ref{ssec:pre}, 
and then present the main results in Section \ref{sc22}.
 
 \subsection{Preliminaries}
 \label{ssec:pre}

\quad Given a generic Polish space $E$,  let $\Pc(E)$ (resp. $\Pl(E)$) be the set of probability (sub-probability) measures on $E$. 
We set $\Pc:=\Pc(\R_+)$ and $\Pl:=\Pl(\R_+)$.   
$\Cc$ (resp. $\Dc$) stands for the space of continuous (resp. c\`adl\`ag) functions on $[-1,\infty)$. Let $\Cc$ be endowed with the compact convergence induced by the metric $d_u$, i.e. 
\b*
d_u(f,f'):=\sum_{n\ge 1}\frac{1}{2^n}\big(\|f-f'\|_{n}\wedge 1\big)\quad \mbox{with } \|f-f'\|_{n}:=\sup_{t\in [0,n]}|f_t-f'_t|.
\e*
As for $\Dc$, we use the Skorokhod $M_1$ topology which turns out to be more convenient for our purpose. 
Denote by $d_m$ the corresponding metric
(see e.g. Whitt \cite[Chapter 12]{Whitt2002} for a complete overview). 
Moreover, we introduce the set
\b*
\M := \big\{\ell\in\Dc    \mbox{ is non-decreasing:}\quad   \lim_{t\to\infty}\ell_t=:\ell_{\infty}\le 1 \mbox{ and } \ell_{t}=0, ~ \forall t\in [-1,0)\big\}
\e* 
and the  L\'evy distance $d$ defined by
\b*
d(\ell,\ell') := \inf\big\{\eps>0:~ \ell_{t+\eps}+\eps \ge \ell'_t\ge \ell_{t-\eps}-\eps,~ \forall t\ge 0\big\},\quad \ell,\ell'\in\M.
\e*
For each $t\ge 0$, define $d_t: \M\times \M\to \R_+$ by $d_t(\ell,\ell'):=d(\ell_{t\wedge},\ell'_{t\wedge})$, and further $\hr$ by
\b*
\hr(\ell,\ell') := \int_{0}^{\infty}e^{-t}\big(d_t(\ell,\ell')\wedge 1\big)\d t,
\e*
where $\ell_{t\wedge}\in\M$ is the function stopped at $t$, i.e. $\ell_{t\wedge s}:=\ell_{\min(t,s)}$ for all $s\ge -1$. 
Lemma \ref{lem:metrics} summarizes the properties of $d$ and $\hr$, and its proof can be read from \cite{Whitt2002}. 
 \begin{lemma}\label{lem:metrics}
 For every $\ell\in \M$, there exists a unique $\theta_{\ell}\in \Pl$ such that $\ell_t=\theta_{\ell}((-\infty,t])$ for all $t\ge 0$. For any $(\ell^n)_{n\ge 1}\sb \M$ and $\ell\in \M$, we have $(1)\Longleftrightarrow (2) \Longrightarrow (3)\Longleftrightarrow (4)$, where
\begin{itemize}
\item[\rm (1)] $\lim_{n\to\infty}d(\ell^n,\ell)=0$;
\item[\rm (2)] $\theta_{\ell^n}$ converges weakly  to $\theta_{\ell}$ in $\Pl$;
\item[\rm (3)] $\lim_{n\to\infty}\hr(\ell^n,\ell)=0$;
\item[\rm (4)]  $\lim_{n\to\infty}\ell^n_t=\ell_t$ for all points of continuity $t\ge 0$ of $\ell$.
\end{itemize}
We have $(\M,d)$ and $(\M,\hr)$ are Polish spaces. 
Moreover, $(\M,\hr)$ is compact.
\end{lemma}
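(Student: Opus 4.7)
The plan is to treat the four claims---existence and uniqueness of $\theta_\ell$, the equivalence chain, the Polish property, and compactness of $(\M,\hr)$---in order, leveraging the fact that elements of $\M$ are essentially sub-probability distribution functions on $\R_+$ so that $d$ is nothing but a L\'evy metric adapted to this setting. First, given $\ell \in \M$, I would construct $\theta_\ell$ via the standard Lebesgue--Stieltjes procedure: the non-decreasing, c\`adl\`ag character of $\ell$ defines a premeasure on half-open intervals that extends uniquely to a Borel measure on $\R_+$; since $\ell_\infty \le 1$ and $\ell$ vanishes on $[-1,0)$, the total mass is at most $1$, so $\theta_\ell \in \Pl$. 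Uniqueness follows from the $\pi$--$\lambda$ theorem since the sets $\{(-\infty,t] : t \ge 0\}$ form a $\pi$-system generating the Borel $\sigma$-algebra.

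For $(1) \La (2)$, I would reproduce the classical argument that the L\'evy metric metrizes weak convergence of distribution functions, paying close attention to the sub-probability setting. The key point is that the defining inequality $\ell_{t+\eps}+\eps \ge \ell'_t \ge \ell_{t-\eps}-\eps$ is required uniformly in $t \ge 0$ and hence, by sending $t \to \infty$, also controls $|\ell^n_\infty - \ell_\infty|$; this mass conservation together with pointwise convergence at continuity points is precisely weak convergence of $\theta_{\ell^n}$ tested against bounded continuous functions on $\R_+$. The reverse implication uses a standard covering argument together with the monotonicity of $\ell$ and the fact that $\ell$ has at most countably many jumps.

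For $(3) \La (4)$, I would observe that $d_t$ is a L\'evy distance on paths stopped at $t$, which live on a bounded window; on bounded intervals L\'evy convergence of monotone functions is equivalent to pointwise convergence at continuity points (a Helly-type fact, where no mass can escape). Hence $d_t(\ell^n,\ell) \to 0$ at every continuity point $t$ of $\ell$, and dominated convergence in the $e^{-t}\d t$ integral defining $\hr$ yields $(4) \Rightarrow (3)$; the converse follows because $\hr(\ell^n,\ell) \to 0$ forces $d_t(\ell^n,\ell) \to 0$ for a.e.\ $t$, which suffices by monotonicity of $t \mapsto d_t(\ell^n,\ell)$. The implication $(1) \Rightarrow (3)$ is then immediate since $d_t \le d$ for every $t$.

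Finally, both $(\M,d)$ and $(\M,\hr)$ are separable because step functions with finitely many rational jumps at rational times form a countable dense subset, and completeness is inherited from weak (resp.\ vague) convergence of sub-probability measures. Compactness of $(\M,\hr)$ reduces to Helly's selection theorem: any sequence in $\M$ is uniformly bounded in $[0,1]$ and non-decreasing, so a diagonal extraction on a countable dense set of times produces a candidate limit that extends by right-continuity to an element of $\M$ along which $(4)$ holds, hence $\hr$-convergence. The main subtlety---and the reason for introducing $\hr$ alongside $d$---is that $(\M,d)$ fails to be compact because mass can escape to infinity, whereas $\hr$ is insensitive to this phenomenon and recovers compactness; keeping these two topologies cleanly separated is the most delicate part of the proof and is precisely why $(2) \not\Rightarrow (3)$ cannot be improved to an equivalence.
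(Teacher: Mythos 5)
The paper itself does not prove this lemma (it is quoted with a pointer to Whitt's book), so a self-contained argument is welcome, and your overall architecture --- Lebesgue--Stieltjes construction of $\theta_\ell$ with uniqueness by a $\pi$--$\lambda$ argument, the Lévy metric metrizing weak convergence with the mass-at-infinity control obtained by letting $t\to\infty$ in the defining inequality, Helly selection for compactness of $(\M,\hr)$, and escape of mass explaining why $(\M,d)$ is not compact --- is the standard and correct route. However, two of your specific justifications rest on a false property of the stopped metrics $d_t$. Neither the inequality $d_t\le d$ (which you use for $(1)\Rightarrow(3)$) nor the monotonicity of $t\mapsto d_t(\ell^n,\ell)$ (which you use for $(3)\Rightarrow(4)$) holds. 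Take $\ell=\mathds 1_{[1.05,\infty)}$ and $\ell'=\mathds 1_{[1,\infty)}$: then $d(\ell,\ell')=d_2(\ell,\ell')=0.05$, while the paths stopped at $t=1$ are identically $0$ and $\mathds 1_{[1,\infty)}$, whose Lévy distance is $1$ (the additive $\eps$ cannot compensate a mismatch of the terminal values of the stopped paths). Hence $d_1(\ell,\ell')=1>d(\ell,\ell')$ and $d_1>d_2$: stopping is neither contractive nor monotone for $d$, because it can convert a small time-shift of a jump into a full discrepancy of the ``mass at infinity'' of the stopped paths.

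Both steps are repairable with material you already have, by routing everything through continuity points. For $(1)\Rightarrow(3)$: $(1)\Leftrightarrow(2)$ gives $\ell^n_u\to\ell_u$ at every continuity point $u$ of $\ell$, i.e. $(4)$, and your argument $(4)\Rightarrow(3)$ (Lévy convergence of the paths stopped at a continuity point $t$, then dominated convergence in $\int_0^\infty e^{-t}\big(d_t\wedge 1\big)\,\d t$) closes the loop; note that the stopped paths require $\ell^n_t\to\ell_t$, which is exactly why $t$ must be taken a continuity point. For $(3)\Rightarrow(4)$ two things need fixing: $\hr(\ell^n,\ell)\to 0$ yields $d_t(\ell^n,\ell)\to 0$ only in $e^{-t}\d t$-measure, hence a.e.\ only along subsequences; and monotonicity in $t$ is unavailable. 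Instead, fix a continuity point $u$ of $\ell$ and an arbitrary subsequence, extract a further subsequence and a continuity point $t>u$ of $\ell$ with $d_t(\ell^n,\ell)\to 0$ along it; Lévy convergence of the stopped paths then gives $\ell^n_u\to\ell_u$ along that subsequence, and since the limit value does not depend on the subsequence, the full sequence converges at $u$. Finally, your closing remark that ``$(2)\not\Rightarrow(3)$ cannot be improved to an equivalence'' is garbled: $(2)\Rightarrow(3)$ is true (it is part of the lemma); what fails, by exactly your escape-of-mass example, is the converse $(3)\Rightarrow(2)$.
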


\subsection{Main results}
 \label{sc22}
 
\quad We start with a few vocabularies. 
Given independent random variables $Z^{N,1},\ldots, Z^{N,N}$ and independent Brownian motions $B^1,\ldots, B^N$,  $(X^{N,1},\ldots, X^{N,N}, L^N)$ is said to be a solution to \eqref{dyn:particle} if 
$L^N$ is $\F-$adapted and takes values in $\M$,
and the equations of \eqref{dyn:particle} are satisfied with probability one.
Similarly, for a random variable $Z$ and an independent  Brownian motion $B$, the pair $(X, \Lambda)$ is said to be a solution to \eqref{dyn:lim} if $\Lambda\in\M$ and the equations of  \eqref{dyn:lim} hold. 
Note that $(X^{N,1},\ldots, X^{N,N})$ (resp. $X$) is fully determined if $L^N$ (resp. $\Lambda$) is fixed. 
Hence, we also say that $L^N$ (resp. $\Lambda$) is a solution to \eqref{dyn:particle} (resp. \eqref{dyn:lim}) for the sake of simplicity.  

As previously mentioned,  neither \eqref{dyn:particle} nor \eqref{dyn:lim} can guarantee the uniqueness of their solution. 
Thus, we introduce the minimal solution as follows. 
A solution $(\dX^{N,1},\ldots, \dX^{N,N}, \dL^N)$  (resp. $(\dX, \dLa)$) to \eqref{dyn:particle} (resp. \eqref{dyn:lim}) is called minimal if
\b* 
\dL^N_t\le L^N_t \quad \left(\mbox{resp. } \dLa_t\le \Lambda_t\right),\quad t\ge 0,
\e* 
holds for any solution $(X^{N,1},\ldots, X^{N,N}, L^N)$ (resp. $(X, \Lambda)$) to \eqref{dyn:particle} (resp.  \eqref{dyn:lim}). 
The minimal solutions, if exist, must be unique. Theorem \ref{thm:existence} ensures the existence of minimal solutions, 
which yields the wellposedness of \eqref{dyn:particle} and \eqref{dyn:lim}.
Its proof is given in Section \ref{sc3}.
\begin{theorem}\label{thm:existence}
Both \eqref{dyn:particle} and \eqref{dyn:lim} have a unique minimal solution. 
If $\beta>0$, we have $\dLa_{\infty}<\Lambda_{\infty}$ for any solution $(X, \Lambda)$ to \eqref{dyn:lim} that is different from the minimal solution.
\end{theorem}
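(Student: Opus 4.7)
The plan is in two stages: first construct the minimal solutions to \eqref{dyn:particle} and \eqref{dyn:lim} via a monotone Picard iteration starting from the zero loss process; second, for $\beta>0$, obtain the strict inequality $\dLa_\infty<\Lambda_\infty$ from a Brownian-support and strong Markov argument that exploits the positive drift.

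\emph{Monotone iteration and minimality.} For \eqref{dyn:particle}, set $L^{(0)}\equiv 0$ and inductively define
\begin{equation*}
X^{(n+1),i}_t := Z^i+\beta t+B^i_t-G(L^{(n)}_t),\quad \tau^{(n+1)}_i := \inf\{t\ge 0: X^{(n+1),i}_t\le 0\},\quad L^{(n+1)}_t := \frac{1}{N}\sum_{i=1}^N \mathds{1}_{\{\tau^{(n+1)}_i\le t\}}.
\end{equation*}
Since $G$ is non-decreasing, if $L^{(n-1)}\le L^{(n)}$ then $G(L^{(n-1)})\le G(L^{(n)})$, so $X^{(n),i}\ge X^{(n+1),i}$, $\tau^{(n)}_i\ge \tau^{(n+1)}_i$ and hence $L^{(n)}\le L^{(n+1)}$; by induction the iterates are pointwise non-decreasing. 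Define $\dL$ as the right-continuous envelope of $\lim_n L^{(n)}$, then verify that $\dL_t=\tfrac{1}{N}\sum_i \mathds{1}_{\{\underline{\tau}_i\le t\}}$ with $\dX^i_t := Z^i+\beta t+B^i_t-G(\dL_t)$ and $\underline{\tau}_i := \inf\{t: \dX^i_t\le 0\}$. The key point is that the monotone convergence $G(L^{(n)})\uparrow G(\dL)$ forces $X^{(n),i}\downarrow\dX^i$ pathwise, after which the $M_1$ framework of Lemma \ref{lem:metrics} identifies the limit of hitting times with those of the limit process. For any other solution $L$, an identical induction (comparing each iterate to the fixed point $L$) gives $L^{(n)}\le L$ and thus $\dL\le L$. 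The same scheme, with expectations in place of empirical averages, constructs and establishes minimality of $\dLa$ for \eqref{dyn:lim}; uniqueness of the minimal solution is then tautological.

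\emph{Strict inequality when $\beta>0$.} Let $(X,\Lambda)$ be any solution to \eqref{dyn:lim} with $\Lambda\ne\dLa$; by minimality $\Lambda\ge \dLa$. If $G(\Lambda_t)=G(\dLa_t)$ for every $t$, then the SDEs for $X$ and $\dX$ coincide pathwise, forcing $\Lambda=\dLa$ and a contradiction. Hence there exists $t_0\ge 0$ with $h(t_0):=G(\Lambda_{t_0})-G(\dLa_{t_0})>0$, and right-continuity of $\Lambda,\dLa$ combined with the continuity of $G$ yield $\eta,\delta>0$ such that $h(s)\ge\eta$ for all $s\in[t_0,t_0+\delta]$. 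Set $T:=t_0+\delta/2$ and consider the event
\begin{equation*}
E:=\{\dX_s>0 \ \forall s\in[0,T]\}\cap\{\dX_T\in(0,\eta]\}\cap\{\dX_s>0\ \forall s\ge T\}.
\end{equation*}
On $E$ one has $\underline{\tau}=\infty$ while $X_T=\dX_T-h(T)\le\eta-\eta=0$, so $\tau\le T<\infty$. The first two constraints cut out a set of positive probability by the support theorem for drifted Brownian motion (since $\dX = Z+\beta\cdot+B-G(\dLa)$ differs from a Brownian motion with drift $\beta$ by the bounded deterministic càdlàg function $G(\dLa)$), and by the strong Markov property at $T$ the last constraint has positive conditional probability under $\beta>0$ from any starting value $y>0$. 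Therefore $\P(E)>0$, which gives
\begin{equation*}
\Lambda_\infty-\dLa_\infty=\P(\tau<\infty)-\P(\underline{\tau}<\infty)\ge\P(\tau<\infty,\,\underline{\tau}=\infty)\ge\P(E)>0.
\end{equation*}

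\emph{Main obstacle.} The principal technical hurdle is verifying that the monotone limit is itself a solution, because hitting-time functionals are only lower semicontinuous under pointwise/$M_1$ convergence; this is precisely what motivates the $M_1$-topology and Lévy metric recorded in Section \ref{ssec:pre}, and the main work will be a careful passage to the limit in the expression $\frac{1}{N}\sum_i \mathds{1}_{\{\tau^{(n+1)}_i\le t\}}$ (resp.\ $\P(\tau^{(n+1)}\le t)$). The strict-inequality step is conceptually simpler but needs a clean handling of the jump structure of $G(\dLa)$ when invoking the Brownian support theorem on $[0,T]$.
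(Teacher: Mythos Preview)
Your plan is essentially the paper's: monotone Picard iteration of the loss map from the zero function to obtain the minimal fixed point, plus a Brownian support/Markov argument for the strict inequality when $\beta>0$. Two points are worth flagging.

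\medskip
\textbf{Particle system: the paper avoids the limit passage.} You propose an infinite iteration with a limiting step for \eqref{dyn:particle} just as for \eqref{dyn:lim}. The paper instead exploits that $L^N$ is $\{k/N:0\le k\le N\}$-valued and proves directly that the $N$-th iterate $\Gamma_N^{(N)}[0]$ is already a fixed point, via a short induction on the stopping times $\sigma_n:=\inf\{t:\Gamma_N^{(n)}[0]_t\ge n/N\}$ showing $\Gamma_N^{(n-1)}[0]=\Gamma_N^{(n)}[0]$ on $[0,\sigma_n)$. This sidesteps entirely the semicontinuity-of-hitting-times issue you flag as the ``main obstacle'' on the particle side; your route works too, but is heavier than needed there.

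\medskip
\textbf{McKean--Vlasov: the continuity lemma is the real content.} For \eqref{dyn:lim} your scheme is exactly the paper's, and you correctly isolate the hurdle: showing that the monotone limit of $\Gamma^{(n)}[0]$ is a fixed point. The paper does \emph{not} deduce this from Lemma \ref{lem:metrics} alone; the substantive input is Proposition \ref{prop:continuity}, which proves $\Gamma:(\M,\hr)\to(\M,d)$ is continuous. That in turn rests on Lemma \ref{lem:usc} (an upper-semicontinuity statement handled by a four-case analysis of the limiting hitting time, including a delicate treatment of the at-most-countable jump times of the limit) together with a separate lower bound obtained by conditioning on $\tau$ and using the reflection principle. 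Your sentence ``the $M_1$ framework of Lemma \ref{lem:metrics} identifies the limit of hitting times with those of the limit process'' understates what is needed; the honest version of your proposal should point to (or reproduce) this continuity result.

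\medskip
\textbf{Strict inequality.} Your argument is close to the paper's and is correct in outline. One small cleanup: to conclude $\P(\dX_s>0\ \forall s\ge T\mid \dX_T=y)>0$ for small $y$, you cannot simply invoke ``positive drift from any $y>0$'' because $G(\dLa)$ may jump after $T$; you should use right-continuity of $G(\dLa)$ at $T$ to find a short interval where the loss increment is $<y/2$, then push the Brownian path above $G(1)$ on that interval and use the drift thereafter. The paper's version of this step is terse but relies on the same Markov/coupling idea.
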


From the financial perspective, all banks will default in finite time when $\beta\le 0$.
If $\beta>0$, then $1-\dLa_{\infty}$ is the largest proportion of banks surviving forever in our setting \eqref{dyn:lim}. 

Next we turn to the asymptotic proportion of banks that survive forever. 
As $G$ is continuous, it is uniformly continuous restricted on any compact subset. Let $\alpha: [0,1]\to \R_+$ be the modulus of continuity of $G$, i.e. $|G(z)-G(z')|\le \alpha(|z-z'|)$ for all $z,z'\in [0,1]$. 
We also need the following technical assumption on the initial distributions.

\begin{assumption}\label{ass1}
$Z^{N,1},\ldots, Z^{N,N}$ are i.i.d. random variables 
whose probability distribution $\theta^N\in \Pc$ is such that $(\theta^N, \, N \ge 1)$ converges weakly to $\theta$. 
\end{assumption}

For any $b\in\R$, denote by $Q_{b}:\R\to\R$ the translation, i.e.  $Q_b(z):=z+b$. 
Theorem \ref{thm:limit} relates the expected surviving proportion $\E [S^N_\infty]/N$ to the minimal solution $\dLa$ to the McKean-Vlasov equation \eqref{dyn:lim}.
Its proof is given in Section \ref{sc41}.

\begin{theorem}\label{thm:limit}
Assume that there exists $(\gamma_N, \, N \ge 1) \subset \R_+$ such that
\b*
\lim_{N\to\infty}\gamma_N = \lim_{N\to\infty}\frac{1}{N\gamma_N^2} = 0 &\mbox{and}&
\theta^N = Q_{\alpha(\gamma_N)}(\theta),\quad N\ge 1, 
\e*
where $Q_b(\theta)$ is the image measure of $\theta$ by  $Q_b$. 
Taking the minimal solution $(X^{N,1},\ldots, X^{N,N}, L^N)=(\dX^{N,1},\ldots, \dX^{N,N}, \dL^N)$, it holds that  
\b*
\lim_{N\to\infty}  \E\left[\left|\frac{\dS^N_{\infty}}{N}-(1-\dLa_{\infty})\right|\right]=0.
\e*
\end{theorem}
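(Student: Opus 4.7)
Since $\dS^N_\infty/N = 1 - \dL^N_\infty$, the claim is equivalent to $\E[|\dL^N_\infty - \dLa_\infty|]\to 0$; as both quantities lie in $[0,1]$, bounded convergence will reduce this to $\dL^N_\infty \to \dLa_\infty$ in probability. My plan is to establish matching upper and lower bounds in probability via two different couplings.

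For the upper bound, I would introduce coupled i.i.d.\ McKean--Vlasov particles $\tilde X^i_t := \tilde Z^i + \beta t + B^i_t - G(\dLa_t)$, where $\tilde Z^i := Z^{N,i} - \alpha(\gamma_N)$ has law $\theta$ by the assumption $\theta^N = Q_{\alpha(\gamma_N)}(\theta)$; their hitting times $\tilde\tau^i$ are then i.i.d.\ with common distribution function $\dLa$. Setting $\tilde L^N_t := N^{-1}\sum_i \mathds{1}_{\{\tilde\tau^i\le t\}}$ and $E := \{\sup_{t\ge 0}|\tilde L^N_t - \dLa_t|\le \gamma_N\}$, the Dvoretzky--Kiefer--Wolfowitz inequality yields $\P(E^c) \le 2e^{-2N\gamma_N^2} \to 0$ since $N\gamma_N^2\to\infty$. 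On $E$, the modulus of continuity of $G$ gives $G(\tilde L^N_t) \le G(\dLa_t) + \alpha(\gamma_N)$, whence
\[ Z^{N,i} + \beta t + B^i_t - G(\tilde L^N_t) \;\ge\; \tilde Z^i + \beta t + B^i_t - G(\dLa_t) \;=\; \tilde X^i_t. \]
Defining the monotone map $F:\M\to\M$ by $F(L)_t := N^{-1}\sum_i \mathds{1}_{\{\tau^i(L)\le t\}}$ with $\tau^i(L):=\inf\{s\ge 0: Z^{N,i}+\beta s+B^i_s-G(L_s)\le 0\}$, this rewrites as $F(\tilde L^N)\le \tilde L^N$ on $E$, i.e., $\tilde L^N$ is a supersolution. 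Because $F$ is non-decreasing and, from the construction behind Theorem~\ref{thm:existence}, $\dL^N = \lim_k F^k(0)$, monotone iteration propagates the inequality to $\dL^N \le \tilde L^N$ on $E$. Combined with $\mathrm{Var}(\tilde L^N_\infty)\le 1/(4N)$, this will give $\limsup_N \dL^N_\infty \le \dLa_\infty$ in probability.

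For the lower bound, I would exploit the same iterative construction applied to both minimal solutions. Setting $\Lambda^{(0)}:=0$, $L^{(0),N}:=0$, define inductively $\Lambda^{(k+1)}_t := \P(\inf_{s\le t}[Z+\beta s+B_s - G(\Lambda^{(k)}_s)]\le 0)$ and $L^{(k+1),N}:= F(L^{(k),N})$, so that $\Lambda^{(k)}\uparrow \dLa$ and $L^{(k),N}\uparrow \dL^N$. An induction on $k$---combining the weak convergence $\theta^N\to\theta$, continuity of $G$, and continuity of the hitting-time functional at a driver $L\in(\M,\hr)$---will deliver $L^{(k),N}_\infty \to \Lambda^{(k)}_\infty$ in probability as $N\to\infty$ for each fixed $k$. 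Given $\eps>0$, I then choose $K$ with $\Lambda^{(K)}_\infty \ge \dLa_\infty - \eps$, so that $\dL^N_\infty \ge L^{(K),N}_\infty \ge \dLa_\infty - 2\eps$ with probability tending to $1$, hence $\liminf_N \dL^N_\infty \ge \dLa_\infty$ in probability. Combining the two bounds and applying bounded convergence completes the proof.

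The main obstacle I anticipate is the Knaster--Tarski-type step on the random event $E$: invoking $\dL^N = \lim_k F^k(0)$ pathwise in $\omega$ requires the monotone construction underlying Theorem~\ref{thm:existence}, and the comparison $F^k(0)\le \tilde L^N$ must be preserved in the increasing limit. A secondary difficulty is justifying the inductive LLN for $L^{(k),N}$, because continuity of the hitting-time functional at $\Lambda^{(k)}\in\M$ can fail at the discontinuity points of $\Lambda^{(k)}$ and must be handled through the weak-convergence characterization of Lemma~\ref{lem:metrics}, restricting to points of continuity of the limit.
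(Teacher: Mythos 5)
Your reduction to $\dL^N_\infty\to\dLa_\infty$ in probability and your upper-bound half are correct, and the latter is essentially the paper's own argument: the paper also couples with i.i.d.\ particles driven by the deterministic $\dLa$ and shifted initial data (this is exactly why the hypothesis $\theta^N=Q_{\alpha(\gamma_N)}(\theta)$ with $N\gamma_N^2\to\infty$ is there), applies Dvoretzky--Kiefer--Wolfowitz, and propagates the resulting supersolution property through the monotone iteration defining $\dL^N=\Gamma_N^{(N)}[0]$ to get $\dL^N\preceq\dLa+\gamma_N$ on the good event. Your observation that this comparison survives at $t=\infty$ and combines with a variance bound on $\tilde L^N_\infty$ is fine.

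The genuine gap is in your lower bound, in the inductive claim that $L^{(k),N}_\infty\to\Lambda^{(k)}_\infty$ in probability. Two things are missing. First, for $k\ge 1$ the summands $\mathds{1}_{\{\tau^i(L^{(k),N})\le t\}}$ are not i.i.d.: the random driver $L^{(k),N}$ depends on all the Brownian motions, including $B^i$ itself, so no conditional law of large numbers applies directly; you would need either the exchangeability/propagation-of-chaos machinery (which is what the paper uses, via tightness of the empirical measures $\xi^N$, the crossing property, and identification of every limit point as a McKean--Vlasov solution, which dominates $\dLa$ by minimality) or a sandwich by \emph{deterministic} drivers, which forces you to carry a two-sided L\'evy-type bound on $L^{(k),N}$ through the induction rather than mere convergence in probability of the value at $\infty$. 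Second, and more seriously, you mislocate the difficulty: restricting to continuity points of $\Lambda^{(k)}$ only handles convergence in $\hr$, i.e.\ at finite times, whereas the quantity you need is the total default mass at $t=\infty$. By Lemma \ref{lem:metrics}, $\hr$-convergence (equivalently, convergence at continuity points) does \emph{not} imply convergence of $\ell^n_\infty$; the functional $\ell\mapsto\Gamma[\ell]_\infty$ is exactly where the paper has to work hardest (second half of Proposition \ref{prop:continuity}), and the same issue reappears in your inductive step: particles may survive past any fixed horizon $t$ yet default later, and this tail must be controlled uniformly in $N$ (resp.\ in the approximation parameter). The paper does this for $\beta>0$ via the coupling $X^{N,1}_t\ge Z^{N,1}+\beta t+B^1_t-G(1)$ together with the explicit survival probability $p(z)=1-e^{-2\beta z^+}$, which makes $\sup_N\big|\P(\tau^N_1>t)-\P(\tau^N_1=\infty)\big|$ small for large $t$ and allows the interchange of $N\to\infty$ and $t\to\infty$. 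Your proposal contains no substitute for this step (a monotone sandwich with one-sided shifts could avoid it, but then you must also prove the resulting boundary/crossing events have probability zero, in the spirit of Lemma \ref{lem:usc}); as written, the induction does not close.
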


Theorem \ref{thm:reg} shows that the unique solution to the $\varepsilon$-regularized problem \eqref{dyn_reg}
converges to the minimal solution $\dLa$ to \eqref{dyn:lim}.
Its proof is given in Section \ref{sc42}.
\begin{theorem}\label{thm:reg}
Let the assumptions in Theorem \ref{thm:limit} hold,
and assume further that $G$ is Lipschitz. 
For every $\eps>0$, 
 \eqref{dyn_reg} has a unique solution $X^\eps$ such that $X^\eps$ has continuous paths and $\lim_{\eps\to 0} X^\eps_t =\dX_t$ for all $t\ge 0$.
\end{theorem}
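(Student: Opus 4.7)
The plan is to separate the well-posedness of \eqref{dyn_reg} from the convergence $X^\eps\to\dX$, and to handle each via a monotonicity argument based on a fixed-point reformulation of \eqref{dyn_reg}.

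For existence and uniqueness, I would rewrite \eqref{dyn_reg} as the fixed-point equation $\Lambda=F^\eps(\Lambda)$ on the space of non-decreasing $[0,1]$-valued continuous paths, where
\b*
F^\eps(\Lambda)_t:=1-\E\left[\exp\left(-\frac{1}{\eps}\int_0^t (Z+\beta s+B_s-G(\Lambda_s))^-\d s\right)\right].
\e*
Using $|e^{-a}-e^{-b}|\le|a-b|$ for $a,b\ge 0$ together with the Lipschitz property of $G$, one obtains
\b*
|F^\eps(\Lambda)_t-F^\eps(\Lambda')_t|\le \frac{\mathrm{Lip}(G)}{\eps}\int_0^t|\Lambda_s-\Lambda'_s|\d s,
\e*
and a Picard iteration combined with a Gronwall estimate (equivalently, a contraction argument in a weighted sup norm) delivers a unique fixed point $\Lambda^\eps$. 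Continuity of $\Lambda^\eps$ in $t$ follows from dominated convergence, so $X^\eps_t=Z+\beta t+B_t-G(\Lambda^\eps_t)$ has continuous paths.

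For the convergence, the key is a double monotonicity of $F^\eps$. First, $F^\eps$ is monotone in $\Lambda$, because $\Lambda\le\Lambda'$ forces $(X^\Lambda)^-\le(X^{\Lambda'})^-$. Second, $F^\eps(\Lambda)\le F^{\eps'}(\Lambda)$ whenever $\eps\ge\eps'$, since $e^{-a/\eps}\ge e^{-a/\eps'}$ for $a\ge 0$. Picard iterating $F^\eps$ from $\Lambda\equiv 0$ then shows that $\eps\mapsto\Lambda^\eps_t$ is non-increasing, so the pointwise monotone limit $\Lambda^*_t:=\lim_{\eps\downarrow 0}\Lambda^\eps_t$ exists. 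Moreover, for any solution $\Lambda$ to \eqref{dyn:lim} with default time $\tau$, the pathwise inequality $\exp(-\eps^{-1}\int_0^t(X^\Lambda_s)^-\d s)\ge\mathds{1}_{\{\tau>t\}}$ gives $F^\eps(\Lambda)\le\Lambda$, and monotone Picard iteration from $0$ then yields $\Lambda^\eps\le\Lambda$; taking $\Lambda=\dLa$ yields $\Lambda^*\le\dLa$.

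The main obstacle is the reverse inequality $\Lambda^*\ge\dLa$, which I would obtain by showing that (the right-continuous modification of) $\Lambda^*$ is itself a solution of \eqref{dyn:lim} and then invoking the minimality of $\dLa$. Setting $X^*_t:=Z+\beta t+B_t-G(\Lambda^*_t)$ and $\tau^*:=\inf\{t\ge 0:X^*_t\le 0\}$, the monotone convergences $X^\eps_s\downarrow X^*_s$ and $\int_0^t(X^\eps_s)^-\d s\uparrow\int_0^t(X^*_s)^-\d s$ combined with bounded convergence give
\b*
1-\Lambda^*_t=\P\left(\int_0^t(X^*_s)^-\d s=0\right).
\e*
Identifying the right-hand side with $\P(\tau^*>t)$ is the delicate point: one inclusion is immediate, while for the other one uses that the Brownian component forces $X^*$ to cross $0$ transversally, so that once $X^*$ first meets $(-\infty,0]$ (whether continuously or through a downward jump of $G(\Lambda^*)$) it instantly accumulates a set of positive Lebesgue measure strictly below $0$ in any right-neighborhood. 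This makes (the càdlàg modification of) $\Lambda^*$ a solution of \eqref{dyn:lim}; by minimality $\Lambda^*=\dLa$, and continuity of $G$ closes the argument via $X^\eps_t\to\dX_t$ for every $t\ge 0$.
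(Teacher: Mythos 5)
Your proposal is correct and follows essentially the same route as the paper: a fixed-point reformulation of \eqref{dyn_reg}, uniqueness via the Lipschitz bound and Gronwall, monotone iteration from $0$ combined with monotonicity in $\eps$ to produce the increasing limit, the comparison $\Gamma^\eps\preceq\Gamma$ (your pathwise inequality $F^\eps(\dLa)\preceq\dLa$) to bound the limit by $\dLa$, identification of the limit as a solution of \eqref{dyn:lim} through a Brownian crossing argument, and minimality to conclude. The only differences are cosmetic: you get existence by contraction rather than by monotone iteration with dominated convergence, and your occupation-integral identification $1-\Lambda^*_t=\P\left(\int_0^t(X^*_s)^-\,\d s=0\right)$ plays the role of the paper's Lemmas \ref{lem:regularity} and \ref{lem:continuity} inside Proposition \ref{prop:conv}, both resting on the same local behaviour of Brownian motion at the hitting time.
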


Finally, we consider the efficiency of equity injections. 
Similar to \eqref{dyn:particle}, the equations \eqref{dyn:allo_particle} do not ensure the uniqueness of their solution in general. 
Hence, we adopt the minimal solution as follows.  
$(X^{\phi, N,1},\ldots, X^{\phi, N,N}, L^{\phi,N})$ is said to be a solution to \eqref{dyn:allo_particle}  
if $L^{\phi,N}$ is $\F-$adapted and takes values in $\M$, and the equations of \eqref{dyn:allo_particle} are satisfied with probability one.
We say that $(\dX^{\phi, N,1},\ldots, \dX^{\phi, N,N}, \dL^{\phi,N})$ is a minimal solution to \eqref{dyn:allo_particle} if
\b* 
\dL^{\phi,N}_t\le L^{\phi,N}_t, \quad t\ge 0,
\e* 
holds for any solution $(X^{\phi, N,1},\ldots, X^{\phi, N,N}, L^{\phi,N})$ to \eqref{dyn:allo_particle}.  
\begin{theorem}
\label{thm:allo}
For all $N\ge 1$ and $\phi\in\Phi_N$,  \eqref{dyn:allo_particle} have a unique minimal solution. 
Assume further that $G(x)=\alpha x$ for some $\alpha\in \R_+$.
\begin{enumerate}
\item[(i)]
If $\beta<0$, then the budget is useless and the number of banks surviving forever is finite, i.e.
\b*
\sup_{\phi\in\Phi_N} \E[S^{\phi,N}_{\infty}]<-2/\beta, \quad \mbox{for all } N\ge 1.
\e*
Here we emphasize that $S^{\phi,N}_{\infty}$ corresponds to any solution to \eqref{dyn:allo_particle}. 
\item[(ii)]
If $\beta=0$ and $\theta$ is compactly supported, then the budget allows to maintain the number of banks surviving forever of order $\sqrt N$, i.e.
\b*
0<\liminf_{N\to\infty} \left(\sup_{\phi\in\Phi_N} \E\left[\frac{\dS^{\phi,N}_{\infty}}{\sqrt N}\right] \right) \le \limsup_{N\to\infty} \left(\sup_{\phi\in\Phi_N} \E\left[\frac{\dS^{\phi,N}_{\infty}}{\sqrt N}\right] \right) <\infty.
\e*
\item[(iii)]
If $\beta>0$ and $\alpha=0$, then the budget's effect is negligible and the proportion of banks surviving forever remains unchanged, i.e. 
\b*
\lim_{N\to\infty} \left(\sup_{\phi\in\Phi_N} \E\left[\frac{S^{\phi,N}_{\infty}}{N}\right] \right) = \lim_{N\to\infty} \E\left[\frac{S^{{\bf 0},N}_{\infty}}{N}\right]>0,
\e*
where ${\bf 0}\in\Phi_N$ stands for the strategy $  \big((\phi^1_t\equiv 0,\ldots,\phi^N_t\equiv 0), \, t \ge 0\big)$. 
\end{enumerate}
\end{theorem}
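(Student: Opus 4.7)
My plan is to treat the three regimes $\beta<0$, $\beta=0$, $\beta>0$ separately, after first establishing existence and uniqueness of the minimal solution to the controlled particle system \eqref{dyn:allo_particle} by repeating the monotone-iteration scheme of Theorem \ref{thm:existence}: the adapted drift $\phi^i_t$ preserves the monotonicity of the map $L\mapsto (1/N)\sum_i\mathds{1}_{\{\tau^{\phi,L}_i\le t\}}$ on $(\M,d)$, so iterating from $L\equiv 0$ produces the minimal $\dL^{\phi,N}$. All three parts then share a common core: exploit the positivity $X^{\phi,N,i}_t\ge 0$ on $\{\tau^{\phi,N}_i>t\}$, combined with the budget identity $\sum_i T^i_t\le t$ (where $T^i_t:=\int_0^t \phi^i_s\,ds$) and a Cauchy--Schwarz estimate on the Brownian increments.

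For Part (i), combining $X^{\phi,N,i}_t\ge 0$ with $G\ge 0$ gives the pointwise bound
\[
|\beta| t\,\mathds{1}_{\{\tau^{\phi,N}_i>t\}} \le \big(Z^i+T^i_t+B^i_t\big)\,\mathds{1}_{\{\tau^{\phi,N}_i>t\}}.
\]
Summing over $i$, taking expectations, and using $\sum_i T^i_t\le t$, $\sum_i\E[Z^i\mathds{1}_{\{\tau^{\phi,N}_i>t\}}]\le N\E[Z]$, and the discrete Cauchy--Schwarz estimate $\sum_i\E[B^i_t\mathds{1}_{\{\tau^{\phi,N}_i>t\}}]\le \sqrt{t N\E[S^{\phi,N}_t]}$, I arrive at
\[
|\beta| t\,\E[S^{\phi,N}_t] \le t+\sqrt{tN\E[S^{\phi,N}_t]}+N\E[Z].
\]
Dividing by $t$ and letting $t\to\infty$ gives $|\beta|\E[S^{\phi,N}_\infty]\le 1$, hence $\E[S^{\phi,N}_\infty]\le 1/|\beta|<2/|\beta|$; if $\E[Z]=\infty$, I would first truncate $Z^i$ at level $M$ and let $M\to\infty$ at the end. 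For Part (iii), $\alpha=0$ gives $G\equiv 0$, so the particles decouple conditionally on $\phi$. The trivial lower bound $\sup_\phi\E[S^{\phi,N}_\infty/N]\ge\E[S^{\mathbf 0,N}_\infty/N]\to\P(\tau^0=\infty)>0$ follows from Theorem \ref{thm:limit} applied to the uncontrolled system. For the matching upper bound, I would show by a Girsanov/coupling argument that $\sum_i(\P(\tau^{\phi,N}_i=\infty)-\P(\tau^{0,N}_i=\infty))$ is $O(1)$ uniformly in $N$, since the marginal gain in survival per unit of added drift is uniformly bounded and the added-drift rate satisfies $\sum_i\phi^i_t\le 1$; dividing by $N$ then closes the gap as $N\to\infty$.

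Part (ii) is the most delicate and contains the principal obstacle. For the upper bound, I rerun the Part (i) calculation with $\beta=0$ and $G(x)=\alpha x$: summing the pointwise inequality $\alpha L^{\phi,N}_t\le Z^i+T^i_t+B^i_t$ over surviving banks yields
\[
\alpha\,\E\!\left[S^{\phi,N}_t\,\frac{N-S^{\phi,N}_t}{N}\right] \le N\E[Z]+t+\sqrt{tN\E[S^{\phi,N}_t]}.
\]
Writing $K:=\E[\dS^{\phi,N}_\infty]$ and anticipating $K\ll N$, the left-hand side is $\approx\alpha K$; choosing $t$ of order $N$ so that the $t$ and $\sqrt{tNK}$ terms are balanced with $N\E[Z]$ yields $K\le C\sqrt{N}$, uniformly in $\phi$. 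The lower bound is where the real work lies: I plan to take $\phi\equiv\mathbf{0}$ and appeal to the Tang--Tsai type asymptotic for the uncontrolled mean-field system referenced in the introduction, which asserts that in the neutral economy $\beta=0$ with $\alpha>0$ and compactly supported $\theta$ the expected number of survivors is already of order $\sqrt{N}$. This estimate relies on sharp hitting-time asymptotics for drifted Brownian motion and a careful coupling between the $N$-particle system and its McKean--Vlasov limit to control the cascading losses; establishing it rigorously is by far the hardest step, while everything else is a routine application of the Cauchy--Schwarz plus budget-constraint toolkit already used in Part (i).
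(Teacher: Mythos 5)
Your part (i) is correct and in fact a nice alternative to the paper's argument: the paper splits survivors according to whether their cumulative injection exceeds $-\beta t/2$ (at most $-2/\beta$ banks can receive that much), whereas your positivity-plus-budget-plus-Cauchy--Schwarz computation gives the even better bound $\E[S^{\phi,N}_\infty]\le 1/|\beta|$ for any solution; just note that your truncation patch works not by a monotone passage but because, for fixed $N$, the truncated and original systems coincide on $\{\max_{i\le N}Z^i\le M\}$, an event of probability tending to one. The existence of the minimal solution (monotone iteration as in Theorem \ref{thm:existence}) matches the paper.

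The genuine gaps are in parts (ii) and (iii). For the upper bound in (ii), the same toolkit structurally cannot produce $O(\sqrt N)$: in your inequality $\alpha\,\E[S_t(N-S_t)/N]\le N\E[Z]+t+\sqrt{tN\E[S_t]}$ the right-hand side is at least of order $N$ for every admissible choice of $t$ (you need $t$ bounded below for the left side to be comparable to $\alpha K$, and then $N\E[Z]$ alone is order $N$; with $t\sim N$ the balance $\alpha K\lesssim N+N\sqrt K$ allows $K$ up to order $N^2$, nothing like $C\sqrt N$). Moreover replacing $\E[S_t(N-S_t)/N]$ by $\alpha K$ needs a separate concentration argument that you do not supply. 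The paper instead couples \eqref{dyn:allo_particle} with the Up-the-River model and invokes the $4/\sqrt\pi$ upper bound of \cite{TT18}; that comparison (or a reproduction of its proof) is the missing ingredient. The lower bound in (ii) is based on a false premise: with $\beta=0$ and $\phi\equiv\mathbf 0$ every capital is dominated by $Z^i+B^i_t$, which hits $0$ in finite time a.s., so $\E[S^{\mathbf 0,N}_\infty]=0$; the $\sqrt N$ in the Tang--Tsai result is attained only by \emph{using} the budget, and the paper must construct an explicit control (no injection until a bank reaches level $N/m$, then drift $1/m$ to the first $m\sim\sqrt N$ such banks) together with Lemmas \ref{lem:shorttime} and \ref{lem:numberlow} to beat the mean-field drag $\alpha$. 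Finally, in (iii) your key claim that the controlled gain $\sum_i\big(\P(\tau^{\phi,N}_i=\infty)-\P(\tau^{0,N}_i=\infty)\big)$ is $O(1)$ does not follow from ``marginal gain per unit drift is bounded and $\sum_i\phi^i_t\le1$'': the budget integrated over the infinite horizon is infinite, so this reasoning bounds nothing. The paper's argument (It\^o's formula applied to $\sum_i\Psi(t-s,X^{\phi,N,i}_s)$, with $\partial_x\Psi$ bounded by heat-kernel terms plus $2\beta$) only yields a gain of order $C+2\beta t$ on horizon $t$, and the conclusion $o(N)$ requires choosing $t=\varepsilon N/(2\beta)$ and using $\Psi(t,x)\to1-e^{-2\beta x}$; your sketch would need essentially this computation to close.
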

The proof of Theorem \ref{thm:allo} is given in Section \ref{sc5}.
The theorem indicates that the number of surviving banks scales differently with $\beta < 0$ (negative economy), $\beta = 0$ (neutral economy),
and $\beta > 0$ (positive economy).
Part (i) implies that the number of surviving banks is of constant order (bounded by $-2/\beta$) in a negative economy,
so equity injections as economic intervention are far from efficient.  
Part (ii) shows that in a neutral economy, the number of surviving banks scales as $\sqrt{N}$,
which is of the same order as the ``Up the River'' model \cite{Aldous02, TT18}.
Lower and upper bounds will also be derived in Section \ref{sc52}.
Part (iii) indicates that in a positive economy with no systemic interactions, 
the budget control is not efficient in the sense that 
the proportion of surviving banks is unchanged asymptotically with or without equity injections. 
The general case where $\alpha \ne 0$ seems to be challenging, 
and we conjecture that the budget's effect is also negligible.

\section{Proof of Theorem \ref{thm:existence}}
\label{sc3}

\quad The idea of the proof is similar to \cite{CRS2020}, which relies on a fixed point argument. 
Here we provide more details specific to our problem. 

Let us introduce the operator $\Gamma:\Dc\to\M$ as follows. For $\ell\in \Dc$, let $\Gamma[\ell]\in\M$ be defined by $\Gamma[\ell]_t := \P(\tau^{\ell}\le t)$ for all $t\ge 0$, where 
\b*
X^{\ell}_t := Z +\beta t + B_t- G(\ell_t) &\mbox{and}&   \tau^{\ell}:= \inf \big\{t\ge 0:~ X^{\ell}_t \le 0\big\}.
\e* 
We define similarly $\Gamma_N:\Sc(\Dc)\to \Sc(\M)$, where $\Sc(\Dc)$ (resp. $\Sc(\M)$) denotes the set of $\F-$adapted processes taking values in $\Dc$ (resp. $\M$). For $L\in\Sc(\Dc)$, define
\b*
\Gamma_N[L]_t := \frac{1}{N}\sum_{i=1}^N\mathds{1}_{\{\tau^{L}_i\le t\}},
\e*
where for $1\le i\le N$,
\b*
X^{L,i}_t := Z^i +\beta t + B^i_t- G(L_t) &\mbox{and}&  \tau^{L}_i :=  \inf \big\{t\ge 0:~ X^{L,i}_t\le 0\big\}.
\e* 
For any $\ell, \ell'\in \Dc$ (resp. $L, L'\in \Sc(\Dc)$), we write $\ell\preceq \ell'$ (resp. $L\preceq L'$) 
if $\ell_t\le \ell'_t$ for all $t\ge -1$ (resp. $L_t\le L'_t$ for all $t\ge -1$ almost surely). 
It is easy to verify that $\Gamma, \Gamma_N$ are monotone with respect to $\preceq$, i.e. $\ell\preceq \ell' \Longrightarrow \Gamma[\ell]\preceq \Gamma[\ell']$ (resp.  $L\preceq L' \Longrightarrow \Gamma_N[L]\preceq \Gamma_N[L'])$. In particular, for any $\ell\in\M$ (resp. $L\in\Sc(\M)$), $(X^{\ell},\ell)$ (resp. $(X^{L,1},\ldots, X^{L,N}, L)$) is a solution to \eqref{dyn:lim} (resp. \eqref{dyn:particle}) if and only if $\Gamma[\ell]=\ell$ (resp. $\Gamma_N[L]=L$). 

Our goal is to prove that $\Gamma$ (resp. $\Gamma_N$) has a fixed point corresponding to the minimal solution. 
To this end, we need the following lemma. 
\begin{lemma}\label{lem:usc}
For any $(\ell^n, \, n \ge 1) \subset \M$ and $\ell\in \M$ satisfying $\lim_{n\to\infty}\hr(\ell^n,\ell)=0$, we have
\b*
\limsup_{n\to\infty} \Gamma[\ell^n]_t\le \Gamma[\ell]_t, \quad \mbox{for all } t\ge 0.
\e*
\end{lemma}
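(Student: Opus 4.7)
The plan is to reduce the claim to a pointwise (i.e., $\omega$-wise) assertion and then invoke reverse Fatou's lemma. Concretely, since $\{\tau^{\ell^n}\le t\}$ and $\{\tau^\ell \le t\}$ are events depending only on the random inputs $(Z,B)$ and the deterministic functions $\ell^n,\ell$, it suffices to show that for every sample point $\omega$,
\[
\limsup_{n\to\infty}\mathds{1}_{\{\tau^{\ell^n}\le t\}}(\omega) \,\le\, \mathds{1}_{\{\tau^{\ell}\le t\}}(\omega),
\]
and then integrate. To unpack this, I fix $\omega$ and suppose $\tau^{\ell^{n_k}}\le t$ for a subsequence $(n_k)$. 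Since $X^{\ell^n}_\cdot = Z+\beta\,\cdot\, +B_{\cdot} - G(\ell^n_\cdot)$ is c\`adl\`ag with only non-positive jumps (because $\ell^n$ is non-decreasing and $G$ is continuous non-decreasing), the infimum defining $\tau^{\ell^n}$ is attained; that is, $X^{\ell^{n_k}}_{s_k}\le 0$ for $s_k:=\tau^{\ell^{n_k}}\in[0,t]$. Passing to a further subsequence I may assume $s_k\to s^\ast\in[0,t]$.

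The crux of the argument is to pass to the limit in $G(\ell^{n_k}_{s_k})$ even though $s^\ast$ may be a discontinuity point of $\ell$. Here the monotonicity of $\ell^{n_k}$ plus the c\`adl\`ag property of $\ell$ come to the rescue. For any $\delta>0$ such that $s^\ast+\delta$ is a point of continuity of $\ell$ (a set of full Lebesgue measure since $\ell$ has at most countably many jumps), I have $s_k\le s^\ast+\delta$ for all large $k$, and thus by monotonicity
\[
\ell^{n_k}_{s_k}\,\le\,\ell^{n_k}_{s^\ast+\delta}\,\xrightarrow[k\to\infty]{}\,\ell_{s^\ast+\delta},
\]
the limit being granted by Lemma \ref{lem:metrics}(3)$\Longleftrightarrow$(4). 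Letting $\delta\searrow 0$ through continuity points and using right-continuity of $\ell$ yields $\limsup_k \ell^{n_k}_{s_k}\le \ell_{s^\ast}$, whence by continuity of $G$,
\[
\limsup_{k\to\infty} G\bigl(\ell^{n_k}_{s_k}\bigr)\,\le\, G(\ell_{s^\ast}).
\]

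Combining this with the inequality $G(\ell^{n_k}_{s_k})\ge Z+\beta s_k+B_{s_k}$ (which follows from $X^{\ell^{n_k}}_{s_k}\le 0$), and using continuity of Brownian motion to pass $Z+\beta s_k+B_{s_k}\to Z+\beta s^\ast+B_{s^\ast}$, I deduce
\[
Z+\beta s^\ast+B_{s^\ast}\,\le\, G(\ell_{s^\ast}),
\]
i.e.\ $X^{\ell}_{s^\ast}\le 0$. Hence $\tau^\ell\le s^\ast\le t$, which is exactly the desired pointwise upper-semicontinuity. A final application of reverse Fatou gives
\[
\limsup_{n\to\infty}\Gamma[\ell^n]_t = \limsup_{n\to\infty}\E\bigl[\mathds{1}_{\{\tau^{\ell^n}\le t\}}\bigr] \le \E\bigl[\mathds{1}_{\{\tau^\ell\le t\}}\bigr] = \Gamma[\ell]_t.
\]

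The main obstacle is step involving the moving time $s_k\to s^\ast$ when $\ell$ has a jump at $s^\ast$: pointwise convergence of $\ell^n$ at continuity points of $\ell$ does not automatically yield convergence of $\ell^{n_k}_{s_k}$. The remedy, as above, is the non-decreasing nature of the $\ell^n$'s, which allows a one-sided sandwich against values of $\ell^{n_k}$ at continuity points $s^\ast+\delta$; this gives the correct one-sided inequality $\limsup_k \ell^{n_k}_{s_k}\le \ell_{s^\ast}$ that matches the direction of the required bound $\limsup_k G(\ell^{n_k}_{s_k})\le G(\ell_{s^\ast})$. Attainment of the infimum in $\tau^{\ell^n}$ is a minor but necessary preliminary that relies on $X^{\ell^n}$ having no positive jumps.
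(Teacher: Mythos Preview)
Your argument is correct and is genuinely simpler than the paper's. The paper works at the probability level: it introduces the pointwise envelope $h_t:=\limsup_n G(\ell^n_t)$ and the decreasing upper envelopes $h^n_t:=\sup_{k\ge n}G(\ell^k_t)$, reduces via Girsanov to $\beta=0$, and then argues through a four-case analysis on the limit $\tau$ of the hitting times of the $h^n$. Because $h$ and $h^n$ are defined as pointwise suprema they need not be right-continuous, and in the case where $\tau$ is a jump point of $h$ the paper must invoke the local behaviour of Brownian motion (that it does not stay above a level it has just touched) to discard a null set.

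You bypass all of this by working $\omega$-wise and exploiting directly that each $\ell^n$ and $\ell$ lies in $\M$: their monotonicity gives the one-sided sandwich $\ell^{n_k}_{s_k}\le \ell^{n_k}_{s^\ast+\delta}$, and their right-continuity lets you send $\delta\downarrow 0$ without losing the inequality. This yields $\limsup_k G(\ell^{n_k}_{s_k})\le G(\ell_{s^\ast})$ for \emph{every} $\omega$, with no exceptional null set and no appeal to Brownian path properties or Girsanov. The price is nil; your route is strictly more elementary.

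One cosmetic remark: your justification that $X^{\ell^{n_k}}_{s_k}\le 0$ does not actually require the absence of positive jumps---right-continuity of $X^{\ell^{n_k}}$ alone forces $X^{\ell^{n_k}}_{\tau^{\ell^{n_k}}}\le 0$ whenever $\tau^{\ell^{n_k}}<\infty$ (otherwise a right neighbourhood of $\tau^{\ell^{n_k}}$ would be free of sub-zero values, contradicting the infimum). This does not affect the validity of the proof.
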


\begin{proof}
Denote $g^n:=G(\ell^n)$ and $g:=G(\ell)$ for all $n\ge 1$. By Lemma \ref{lem:metrics}, there exist $\theta^n, \theta\in \Pl$ such that $\ell^n_t=\theta^n([0,t])$ and $\ell_t=\theta([0,t])$ for all $t\ge 0$. 
By the Portmanteau theorem, we get
\b* 
\limsup_{n\to\infty}\ell^n_t = \limsup_{n\to\infty}\theta^n([0,t]) \le \theta([0,t]) = \ell_t,
\e*
which yields $\Gamma[\ell]_t=\P(\tau^{\ell}\le t)\ge \P(C)$, where $C:= \{\exists s\in[0,t]: Z+\beta s+ B_s\le h_s\}$ and $h_t := G(\limsup_{n\to\infty}\ell^n_t)=\limsup_{n\to\infty}g^n_t$. Then it suffices to prove 
\b*
\limsup_{n\to\infty} \Gamma[\ell^n]_t=\limsup_{n\to\infty} \P(\tau^{\ell^n}\le t)\le \P(C).
\e* 
For each $t\ge 0$, set $h^n_t:=\sup_{k\ge n} g^k_t$ so that $g^n_t\le h^n_t\downarrow h_t$  as $n\to\infty$. Hence, 
\b*
	\limsup_{n\to\infty} \P(\tau^{\ell^n}\le t)\le 
	\lim_{n\to\infty} \P(\exists s\in[0,t]: Z+\beta s+B_s\le h^n_s)=\P(A), 
\e*
where 
\b*
A:=\{\forall n,~ \exists s\in[0,t]: Z+\beta s+ B_s\le h^n_s\} 
=\{\forall n\ge m,~ \exists s\in[0,t]: Z+\beta s+ B_s\le h^n_s\}.
\e*
Here $m$ can be any positive integer. 
Note that  $h^n$ and $h$ are non-decreasing by definition, but may not be right-continuous. It is enough to show  
 $\P(A\setminus C)=0$. Introducing an equivalent probability measure $\Q$ defined by
\b*
\left(\frac{d\Q}{d\P}\right)_t := \exp(-\beta B_t-\beta^2t/2).
\e* 
Then $\P(A\setminus C)=0\Longleftrightarrow \Q(A\setminus C)=0$,
and $(\beta s+B_s)_{0\le s\le t}$ is a Brownian motion under $\Q$. 
Thus, it suffices to prove for the case $\beta=0$. Define
\b*
\tau^n := \inf\{s\in [0,t]:~ Z+B_s \le h^n_s\},\quad \forall n\ge 1,
\e* 
 where we adopt the convention $\inf\emptyset:=\infty$. Clearly, $\tau^n\in [0,t]$ on $A$. 
By definition $h^n\downarrow h$  as $n\to\infty$, $n\mapsto\tau^n$ is non-decreasing, which implies the existence of $\tau:=\lim_{n\to\infty}\tau^n$. In particular, $\tau\in [0,t]$ also holds on $A$. 
In the following, we let the event $A$ occur and distinguish four cases.
\begin{enumerate}
\item[(i)]
Suppose $\tau^n<\tau$ for all $n\ge 1$. 
By definition,  there is some $t_n\in[\tau^n,\tau)$ such that $x+B_{t_n}\le  h^n_{t_n}\le  h^n_{\tau}$. 
Hence, $x+ B_{\tau}=\lim_{n\to\infty}(x+ B_{t_n})\le\lim_{n\to\infty}  h^n_{\tau}= h_{\tau}$. 
Thus, we get
\be\label{ineq1}
A\cap\{\tau^n<\tau,~ \forall n\ge 1\}\subseteq C. 
\ee
\item[(ii)]
If $\tau^n=\tau=t$ for some $n$, then we have $x+B_t\le h^n_t$, and further $x+B_t\le h_t$. 
Therefore, 
\be\label{ineq2}
	A\cap \{\exists n: \tau^n=\tau=t\}\subseteq C.
\ee
\item[(iii)]
If $\tau^n=\tau<t$ for some $n$, and $\tau$ is a point of discontinuity of $h$, 
then we have $x+B_{\tau}=x+B_{\tau^n} \le  h^n_{\tau^n+}= h^n_{\tau+}$. Define $\hat h_s:=\lim_{n\to\infty} h^n_{s+}$, then $x+B_{\tau}\le\hat h_{\tau}$. Namely, 
\be\label{ineq3}
x+B_d\le \hat h_d, 	
\ee
at some point $d\in D$, where $D$ is the countable subset of points of discontinuity of $h$.   
For all $0\le u<s\le t$, we have $h_s=\lim_{n\to\infty} h^n_s \ge\lim_{n\to\infty} h^n_{u+}=\hat h_u$. Now suppose that $C$ does not occur, which implies that $x+B_s> h_s$ for all $s\in[0,t]$ and hence $x+B_s> \hat h_d$ for all $d\in D$ and $s\in(d,t]$. 
By the local behavior of Brownian motion, for every  $d\in[0,t)$ the set  
$\{x+B_d\le \hat h_d,~ x+B_s>\hat h_d, \ \forall s\in(d,t]\}$ is negligible. We conclude by \eqref{ineq3} that  
\be\label{ineq4}
	\P(A\cap\{\exists n: \tau^n=\tau<t,~ \tau\in D\}\setminus C) \le \sum_{d\in D} \P(A\cap\{\exists n: \tau^n=\tau=d<t\}\setminus C) = 0. 
\ee
\item[(iv)]
Finally, suppose $\tau^{n^*}=\tau<t$ for some $n^*$, and $\tau$ is a point of continuity of $h$. 
For any $\eps>0$, there exists $\de>0$ such that $h_{\tau+\de} \le h_{\tau}+\eps$. 
Hence, for all $n\ge n^*$ large enough, $\tau^{n^*}=\tau^{n}=\tau$ and 
\b*
	x+B_{\tau}=x+B_{\tau^n} \le h^n_{\tau^n+\delta}= h^n_{\tau+\de} \to  h_{\tau+\de} \le  h_{\tau}+\eps. 
\e*
As $\eps$ is arbitrary, we get $x+B_{\tau} \le  h_{\tau}$. Therefore, 
\be\label{ineq5}
A\cap \{\exists n: \tau^n=\tau<t,~ \tau\notin D\}\subseteq C.
\ee
\end{enumerate}
Combining \eqref{ineq1}, \eqref{ineq2}, \eqref{ineq4} and \eqref{ineq5}, 
we conclude that $\P(A\setminus C)=0$ and $\limsup_{n\to\infty} \P(\tau^{\ell^n}\le t) \le \P(\tau^{\ell}\le t)$. 
\end{proof}

Next we show that $\Gamma: (\M,\hr)\to (\M,d)$ is continuous.
\begin{proposition}\label{prop:continuity}
For any $(\ell^n, \, n \ge 1) \subset \M$ and $\ell\in \M$, 
$\lim_{n\to\infty}d(\Gamma[\ell^n],\Gamma[\ell])=0$ holds when $\lim_{n\to\infty}\hr(\ell^n,\ell)=0$. 
\end{proposition}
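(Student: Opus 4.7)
The plan is to invoke Lemma \ref{lem:metrics}: the Lévy-metric convergence $d(\Gamma[\ell^n],\Gamma[\ell])\to 0$ is equivalent to weak convergence of the sub-probability measures $\theta_{\Gamma[\ell^n]}\to\theta_{\Gamma[\ell]}$ on $\R_+$. For sub-distribution functions this amounts to proving (a) $\Gamma[\ell^n]_t\to\Gamma[\ell]_t$ at every continuity point $t$ of $\Gamma[\ell]$, and (b) the total-mass convergence $\Gamma[\ell^n]_\infty\to\Gamma[\ell]_\infty$. Lemma \ref{lem:usc} already supplies the upper bound $\limsup_n\Gamma[\ell^n]_t\le\Gamma[\ell]_t$ at every $t\ge 0$, so for (a) the task reduces to the matching lower bound $\liminf_n\Gamma[\ell^n]_t\ge\P(\tau^\ell<t)$, which coincides with $\Gamma[\ell]_t$ at continuity points of $\Gamma[\ell]$.

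To establish this lower bound I argue sample-path-wise. Fix an outcome with $s:=\tau^\ell<t$. Monotonicity of $G\circ\ell$ yields $Z+\beta s+B_s\le G(\ell_s)\le G(\ell_u)$ for every $u\ge s$. By the strong Markov property at the stopping time $\tau^\ell$ combined with local Brownian oscillations (e.g.\ the iterated logarithm law), in every right-neighborhood $(s,s+\delta)$ there exist times at which $B_u-B_s<-\beta(u-s)$, so that $Z+\beta u+B_u<G(\ell_u)$ holds with a strict gap. Shrinking $\delta$ so that $s+\delta<t$ and using path continuity, one obtains a non-empty open subinterval $I\subset(s,t)$ on which this strict inequality persists. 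As $\ell$ has at most countably many discontinuities, $I$ contains a continuity point $u_0$ of $\ell$. By Lemma \ref{lem:metrics} we then have $\ell^n_{u_0}\to\ell_{u_0}$, hence by continuity of $G$, $G(\ell^n_{u_0})\to G(\ell_{u_0})>Z+\beta u_0+B_{u_0}$; this forces $\tau^{\ell^n}\le u_0<t$ for all sufficiently large $n$. Fatou's lemma therefore gives $\P(\tau^\ell<t)\le\liminf_n\P(\tau^{\ell^n}<t)\le\liminf_n\Gamma[\ell^n]_t$, closing (a).

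For (b), when $\beta\le 0$ both total masses equal $1$ and there is nothing to check. When $\beta>0$ the drifted Brownian motion $u\mapsto Z+\beta u+B_u$ tends to $+\infty$ almost surely, while the barriers are uniformly bounded by $G(1)$. Given $\eta>0$, one can pick a continuity point $T$ of $\Gamma[\ell]$ so large that $\P(\inf_{u\ge T}(Z+\beta u+B_u)\le G(1))<\eta$; this probability dominates $\P(T<\tau^{\ell^n}<\infty)$ uniformly in $n$ and likewise $\P(T<\tau^\ell<\infty)$. Combining this tail estimate with (a) applied at $T$ and sending $\eta\to 0$ yields $\Gamma[\ell^n]_\infty\to\Gamma[\ell]_\infty$. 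The main technical hurdle of the whole argument is the lower bound in (a): at the first passage time $s$ the inequality $Z+\beta s+B_s\le G(\ell_s)$ is typically an equality, so the event $\{\tau^\ell<t\}$ cannot be propagated to $\{\tau^{\ell^n}<t\}$ directly; the remedy couples Brownian oscillations (producing later instants where the path sits strictly below the barrier) with the countability of the discontinuity set of $\ell$ (placing such an instant at a continuity point, where $\hr$-convergence becomes pointwise and the continuity of $G$ can be exploited).
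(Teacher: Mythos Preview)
Your argument is correct, and for the lower bound in part (a) it follows a genuinely different and more elementary route than the paper's.

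The paper establishes $\liminf_n\Gamma[\ell^n]_t\ge\Gamma[\ell]_t$ by bounding $\P(\tau^{\ell^n}>t,\tau^\ell\le t)$, conditioning on $\{\tau^\ell=s\}$, and splitting the integral against $d\Gamma[\ell]_s$ into its continuous and jump parts. The continuous part is handled via the reflection principle for drifted Brownian motion together with dominated convergence, while the jump part uses the Portmanteau inequality $\liminf_n\ell^n_{s-}\ge\ell_{s-}$ and the fact that $\P(Z+\beta s+B_s=G(\ell_s))=0$. Your proof bypasses this decomposition entirely: working pathwise on $\{\tau^\ell<t\}$, you use the strong Markov property at $\tau^\ell$ and the immediate oscillation of Brownian motion to produce a time $u_1$ shortly after $\tau^\ell$ at which $Z+\beta u_1+B_{u_1}<G(\ell_{u_1})$ strictly; right-continuity and the countability of discontinuities of $\ell$ then let you locate a continuity point $u_0$ where the strict gap persists, and there $\hr$-convergence upgrades to pointwise convergence, forcing $\tau^{\ell^n}\le u_0<t$ eventually. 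Fatou's lemma closes the argument. This avoids any explicit Gaussian computation and any case analysis on the atoms of $\Gamma[\ell]$; the trade-off is that the paper's approach stays closer to the quantitative machinery used elsewhere in the article (e.g.\ in Lemma~\ref{lem:usc}), whereas yours is self-contained and would transfer more readily to driving noises that merely oscillate locally without having explicit hitting-time densities.

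Your treatment of the total mass in (b) is likewise correct and slightly more direct than the paper's: rather than introducing the function $p(z)=1-e^{-2\beta z^+}$ and coupling with $Y_t=Z+\beta t+B_t$ via the Markov property at a fixed time, you simply observe that $\{T<\tau^{\ell^n}<\infty\}\subset\{\inf_{u\ge T}(Z+\beta u+B_u)\le G(1)\}$ uniformly in $n$, which suffices since the right-hand probability is small for large $T$.
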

\begin{proof}
For simplicity, we write $X^{\ell^n}\equiv X^n$, $X^{\ell}\equiv X$, $\tau^{\ell^n}\equiv \tau^n$ and $\tau^{\ell}\equiv \tau$. 
We first show that $\lim_{n\to\infty}\Gamma[\ell^n]_t=\Gamma[\ell]_t$ for all $t\in J$, 
where $J\subset \R_+$ denotes the collection of the points of continuity of $\Gamma[\ell]$. 
By Lemma \ref{lem:usc}, we have $\limsup_{n\to\infty}\Gamma[\ell^n]_t\le \Gamma[\ell]_t$ for all $t\ge 0$,
and it remains to prove $\lim_{n\to\infty}\big(\Gamma[\ell]_t-\Gamma[\ell^n]_t\big)^+ = 0$ for $t\in J$. 
Write
\b*
\big(\Gamma[\ell]_t-\Gamma[\ell^n]_t\big)^+ \le \P(\tau^{n}>t, \tau\le t) = \int_{[0,t]} \P(\tau^{n}>t|\tau=s) \d\Gamma[\ell]_s.
\e*
As in the proof of Lemma \ref{lem:usc},  we denote $g^n:=G(\ell^n)$ and $g:=G(\ell)$. 
We split the integrand in its continuous and jump part, writing $\Gamma[\ell]^c_s$ for the
continuous part, which we estimate in the following.
\b*
&&\int_{[0,t]} \P(\tau^{n}>t|\tau=s) \d\Gamma[\ell]^c_s \\
&=& \int_{[0,t]} \P(X^{n}_u>0, \forall u\le t|\tau=s) \d\Gamma[\ell]^c_s \\
&\le& \int_{[0,t]} \P(X^{n}_s +\beta(u-s)+ B_u-B_s - (g^n_u-g^n_s)>0, \forall s\le u\le t|\tau=s) \d\Gamma[\ell]^c_s \\
&\le& \int_{[0,t]} \P(X^{n}_s +\beta(u-s)+ B_u-B_s >0, \forall s\le u\le t|\tau=s) \d\Gamma[\ell]^c_s. 
\e*
On the event $\{\tau=s\}$, we have $X_s\le 0$, so
$X^{n}_s \le X^{n}_s-X_s = g_s-g^n_s$. 
We conclude, using the reflection principle (see e.g. \cite[Section 2.8A]{KS91}),
\b*
&&\int_{[0,t]} \P(\tau^{n}>t|\tau=s)\d\Gamma[\ell]^c_s \\
&\le&   \int_{[0,t]} \P(\beta(u-s)+ B_u-B_s >g^n_s-g_s, \forall s\le u\le t|\tau=s) \d\Gamma[\ell]^c_s \\
&=& \int_{[0,t]} \P\left(\inf_{s\le u\le t}\big(\beta (u-s)+ B_u-B_s\big)>g^n_s-g_s\right) \d\Gamma[\ell]^c_s \\
&=& \int_{[0,t]} \P\left(\inf_{0\le u\le t-s}\big(\beta u+ B_u\big) >g^n_s-g_s\right) \d\Gamma[\ell]^c_s \\
&=&  \int_{[0,t]} \left(\Nc\left(\frac{g_s-g^n_s+\beta (t-s)}{\sqrt{t-s}}\right)-e^{2\beta(g^n_s-g_s)}\Nc\left(\frac{g^n_s-g_s+\beta (t-s)}{\sqrt{t-s}}\right)\right) \d\Gamma[\ell]^c_s, 
\e*
where $\Nc$ denotes the cumulative distribution function of standard normal. 
Since $\lim_{n\to\infty}g^n_s=g_s$ holds in a co-countable set, the integrand above converges $\Gamma[\ell]^c-$almost everywhere to $0$. Consequently, the integral above vanishes
as $n\to\infty$ by the dominated convergence theorem.

For the integral with respect to the jump part, we get
\b*
 \int_{[0,t]} \P(\tau^{n}>t|\tau=s) \d\big(\Gamma[\ell]_s-\Gamma[\ell]^c_s\big) &=&  \sum_{s\le t} \P(\tau^{n}>t|\tau=s) \Delta \Gamma[\ell]_s \\
 &=&  \sum_{s<t} \P(\tau^{n}>t|\tau=s)\P(\tau=s) \\
&=&  \sum_{s<t} \P(\tau^{n}>t, \tau=s), 
\e*
where we may take the sum over $s<t$ because $t\in J$. For any $s>0$, we have by the Portmanteau theorem
\b*
\liminf_{n\to\infty} \ell^n_{s-} \ge \ell_{s-} &\Longrightarrow& \liminf_{n\to\infty} g^n_{s-} \ge g_{s-}.
\e*
For $s<t$, we  have $\P(\tau^{n}>t, \tau=s) \le \P(\forall \eps \in (0,t-s): Z+\beta (s+\eps)+ B_{s+\eps}\ge  g^n_{(s+\eps)-}, \tau=s)$. 
Taking the $\limsup$ yields, by  Fatou's lemma,
\b*
\limsup_{n\to\infty} \P(\tau^{n}>t, \tau=s) &\le& \P(\forall \eps \in (0,t-s): Z+\beta (s+\eps)+ B_{s+\eps} \ge g_{(s+\eps)-}, \tau=s) \\
&\le &\P(Z+\beta s+ B_s\ge g_s, \tau=s) \\
&\le &\P(Z+\beta s+ B_s= g_s) = 0.
\e*
Again by the dominated convergence theorem,
we get that the sum converges to zero as $n\to\infty$, which proves $\lim_{n\to\infty}\Gamma[\ell^n]_t=\Gamma[\ell]_t$. Hence, $\lim_{n\to\infty}\hr(\Gamma[\ell^n],\Gamma[\ell])=0$. 

To show $\lim_{n\to\infty}d(\Gamma[\ell^n],\Gamma[\ell])=0$, it remains to show $\lim_{n\to\infty}\Gamma[\ell^n]_{\infty}=\Gamma[\ell]_{\infty}$. 
This is straightforward when $\beta\le 0$, and we assume $\beta>0$ in the remaining of proof. 
For each $z\in\R$, denote by $p(z)$ the probability that $z+\beta t+B_t$ never hits $(-\infty, 0]$. 
We have
$p(z)=1-e^{-2\beta z^+}$, and further $\lim_{z\to\infty}p(z)=1$.
For each $t\ge 0$, we get
\b*
\big|\Gamma[\ell^n]_{\infty}-\Gamma[\ell]_{\infty}\big| &=& \big|\mathbb P(\tau^n=\infty)-\mathbb P(\tau=\infty)\big| \\
&\le& \big|\mathbb P(\tau^n>t)-\mathbb P(\tau^n=\infty)\big|+\big|\mathbb P(\tau^n>t)-\mathbb P(\tau>t)\big|+\big|\mathbb P(\tau>t)-\mathbb P(\tau=\infty)\big| \\
&=& \big|\mathbb P(\tau^n>t)-\mathbb P(\tau^n=\infty)\big|+\big|\Gamma[\ell^n]_t-\Gamma[\ell]_t\big|+\big|\mathbb P(\tau>t)-\mathbb P(\tau=\infty)\big|.
\e*
Let us estimate the first and third terms. 
\b*
\big|\mathbb P(\tau^n>t)-\mathbb P(\tau^n=\infty) \big| &=& \E[\mathds 1_{\{\tau^n>t\}}]- \E[\mathds 1_{\{\tau^n>t\}}\mathds 1_{\{\tau^n=\infty\}}] \\
&=& \E[\mathds 1_{\{\tau^n>t\}}]- \E\big[\P(\tau^n=\infty|\tau^n>t)\mathds 1_{\{\tau^n>t\}}\big].
\e*
On the event $\{\tau^n>t\}$, we have
\b*
\{\tau^n=\infty\} &=& \{X^n_t+\beta(s-t)+(B_s-B_t)- (g^n_s-g^n_t)>0,\forall s\ge t \} \\
&\supset &  \{X^n_t-c+\beta(s-t)+(B_s-B_t)>0,\forall s\ge t \}, \quad \mbox{with } c:=G(1),
\e*
which yields by the Markov property $\P(\tau^n=\infty|\tau^n>t)\ge p(X^n_t-c)$ and thus
\b*
\big|\mathbb P(\tau^n>t)-\mathbb P(\tau^n=\infty) \big| = \E\big[\mathds 1_{\{\tau^n>t\}}\big(1-\P(\tau^n=\infty|\tau^n>t)\big)\big] 
\le  \E\big[\mathds 1_{\{\tau^n>t\}}\big(1-p(X^n_t-c)\big)\big].
\e*
Now we couple the process $X_t$ by $Y_t:=Z+\beta t+B_t$ for all $t\ge 0$, 
so $X^n_t\ge Y_t-c$.
Let $\sigma:=\inf\{t\ge 0: Y_t\le 0\}$, and we have $\tau^n\le \sigma$.
Hence, 
\b*
\big|\mathbb P(\tau^n>t)-\mathbb P(\tau^n=\infty)  \big|\le \E\big[\mathds 1_{\{\tau^n>t\}}\big(1-p(X^n_t-c)\big)\big] \le \big| \E\big[\mathds 1_{\{\sigma>t\}}\big(1-p(Y_t-2c)\big)\big].
\e*
Similarly, we can establish 
$\big|\mathbb P(\tau>t)-\mathbb P(\tau=\infty)\big| \le \big| \E\big[\mathds 1_{\{\sigma>t\}}\big(1-p(Y_t-2c)\big)\big]$, and thus
\b*
 \big|\mathbb P(\tau^n=\infty)-\mathbb P(\tau=\infty)\big|  \le  2\E\big[\mathds 1_{\{\sigma>t\}}\big(1-p(Y_t-2c)\big)\big] + \big|\Gamma[\ell^n]_t-\Gamma[\ell]_t\big|.
 \e*
As $\lim_{t\to\infty}Y_t=\infty$ holds almost surely, we get by the dominated convergence theorem,
\b*
\lim_{t\to\infty}\E\big[\mathds 1_{\{\sigma>t\}}\big(1-p(Y_t-2c)\big)\big] = 0.
\e* 
For any $\eps>0$, there exists $t_{\eps}$ such that 
$\E\big[\mathds 1_{\{\sigma>t\}}\big(1-p(Y_t-2c)\big)\big] \le \eps$ for all $t\ge t_{\eps}$. Fix an arbitrary $t\in J$ with $t>t_{\eps}$. 
Then we have 
\b*
\lim_{n\to\infty}  \big|\Gamma[\ell^n]_{\infty}-\Gamma[\ell]_{\infty}\big| \le  2\eps + \lim_{n\to\infty} \big|\Gamma[\ell^n]_{t}-\Gamma[\ell]_{t}\big|
=2\eps,
\e*
which yields the desired result. 
\end{proof}

Now we prove Theorem \ref{thm:existence}.

\begin{proof}[Proof of Theorem \ref{thm:existence}]
\rmi We start with the particle system \eqref{dyn:particle}. 
For each $n\ge 0$, denote by  $\Gamma_N^{(n)}$ the $n$-th iterate of the operator $\Gamma_N$. First we show $\dL^N:=\Gamma_N^{(N)}[{0}]$ is the minimal solution to \eqref{dyn:particle}, where $0\in\M$ is the function that is identically equal to zero. The monotonicity of $\Gamma_N$, combined with the fact $0\preceq \Gamma_N[0]=\Gamma_N^{(1)}[0]$, yields $\Gamma_N^{(1)}[0]\preceq \Gamma_N\big[\Gamma_N^{(1)}[0]\big]=\Gamma_N^{(2)}[0]$, 
and thus by induction $\Gamma_N^{(n-1)}[0]\preceq \Gamma_N^{(n)}[0]$ for all $n\ge 1$. Note that $\dLa^N\in\M$ only takes values in $\{k/N: k=0, \ldots, N\}$. For $0\le n\le N$, define the stopping times
\b*
\sigma_n := \inf\left\{t\ge 0:~ \Gamma_N^{(n)}[{0}]_t \ge \frac{n}{N}\right\}\in [0,\infty].
\e*
We claim that  \eqref{eq:agree} holds for all $1\le n\le N$:
\be\label{eq:agree}
\Gamma_N^{(n-1)}[{0}]_t = \Gamma_N^{(n)}[{0}]_t,\quad \mbox{for all }  t<\sigma_n.
\ee
Clearly, $\Gamma_N^{(0)}$ denotes the identity operator. For $n=1$, observe that  the first jump time $\sigma_1$ of $\Gamma_N^{(0)}[0]$ coincides with the first time at which any of the drifted Brownian motions $Z^i+\beta t+B^i_t$ hits zero, and $\Gamma_N^{(1)}[0]$ is equal to zero prior to the time $\sigma_1$,  regardless of $\sigma_1<\infty$ and $\sigma_1=\infty$. 
This means that $\Gamma_N^{(0)}[0]=0=\Gamma_N^{(1)}[0]$ holds for all  $t<\sigma_1$. 
For the inductive step, assume that  \eqref{eq:agree} is true for all positive integers up to $n$. 
Applying $\Gamma_N$ to both sides, we obtain
\b*
\Gamma_N^{(n)}[{0}]_t =  \Gamma_N^{(n+1)}[{0}]_t,\quad \mbox{for all }  t<\sigma_n.
\e*
We are done if $\sigma_n=\infty$, and it suffices to deal with the case $\sigma_n<\infty$. 
We distinguish two cases. In the first case, we suppose $\Gamma_N^{(n)}[{0}]_{\sigma_n}>n/N$. 
Therefore, we have $\Gamma_N^{(n)}[{0}]_{\sigma_n}\ge (n+1)/N$, and hence $\sigma_n=\sigma_{n+1}$, completing the inductive step.
For the second case, we have $\Gamma_N^{(n)}[{0}]_{\sigma_n}=n/N$, and for $t\in (\sigma_n,\sigma_{n+1})$,
\b*
\frac{n}{N}= \Gamma_N^{(n)}[{0}]_{\sigma_n} \le \Gamma_N^{(n)}[{0}]_{t}\le\Gamma_N^{(n+1)}[{0}]_{t} < \frac{n+1}{N},
\e*
which implies that $\Gamma_N^{(n)}[{0}]$ and $\Gamma_N^{(n+1)}[{0}]$ agree on  $(0,\sigma_{n+1})$, completing the inductive
step. 
Next we prove that $\dL^N=\Gamma_N^{(N)}[{0}]$ solves the particle system. For $n\ge 0$, repeatedly applying $\Gamma_N$, we get
\b*
\Gamma_N^{(n)}[{0}]_t = \Gamma_N^{(n+1)}[{0}]_t= \cdots = \dL^N_t= \Gamma_N[\dL_N]_t,\quad \mbox{for all } t<\sigma_n.
\e*
Choosing $n=N$, we obtain $\dL^N_t= \Gamma_N[\dL_N]_t$ on $[0,\sigma_N)$. 
If $\sigma_N=\infty$, then $\dL^N=\Gamma_N[\dL_N]$. 
Otherwise, recall that $\dL^N\preceq \Gamma_N[\dL^N]$. 
So we see that $1\le \dL^N_{\sigma_N}= \Gamma_N[\dL^N]_{\sigma_N}\le 1$, and thus $\dL^N= \Gamma_N[\dL^N]$. 
To prove the minimality of $\dL^N$, we pick an arbitrary solution $L$. 
By definition, we have $0\preceq L$, and thus by iteration, 
\b*
\dL^N=\Gamma_N^{(N)}[0]\preceq \Gamma_N^{(N)}[L]=L,
\e*
which implies that $\dL^N$ is indeed the minimal solution.  

\medskip

\noindent \rmii Next we turn to the McKean-Vlasov equation \eqref{dyn:lim}. 
Define the sequence $(\Gamma^{(n)}[{0}], \, n \ge 1) \subset \M$, 
where $\Gamma^{(n)}$ denotes the $n$-th iterate of $\Gamma$. 
By construction, we see that the sequence is non-decreasing with respect to $\preceq$. 
We show in the following that $\Gamma^{(n)}[{0}] \stackrel{\hat d}{\to} \dLa\in \M$ exists, and $\dLa$ is the minimal solution to \eqref{dyn:lim}. 
By definition, we have ${0}\preceq \Gamma[{0}]$. 
For each $t\ge 0$, the sequence $\big(\Gamma^{(n)}[{0}]_t, \, n \ge 1\big)$ is non-decreasing and lying in $[0,1]$, which
implies that we can define $\tilde{\Lambda}$  to be its pointwise limit
\b*
\tilde{\Lambda}_t : = \lim_{n\to\infty} \Gamma^{(n)}[{0}]_t,\quad \mbox{for all } t\ge 0.
\e*
Clearly, $\tilde{\Lambda}$ is non-decreasing with 
$0\le \tilde{\Lambda}_t\le 1$, so its modification $\dLa_t:=\tilde{\Lambda}_{t+}$ lies in $\M$. 
This implies that $\Gamma^{(n)}[{0}] \stackrel{\hr}{\to}\dLa$, and further by Proposition \ref{prop:continuity},
\b*
\Gamma[\dLa] = \Gamma\left[\lim_{n\to\infty}\Gamma^{(n)}[{0}]\right] =\lim_{n\to\infty}\Gamma^{(n+1)}[{0}] = \dLa.
\e* 
So $\dLa$ solves \eqref{dyn:lim}. Suppose $\Lambda$ is another solution. 
By definition, it holds
that ${0}\preceq \Lambda$, and using the monotonicity of $\Gamma$ this leads to
$\Gamma[{0}]\preceq \Gamma[\Lambda]=\Lambda$. 
A straightforward induction shows that $\Gamma^{(n)}[{0}]\preceq \Lambda$ for all $n\ge 1$. 
If $t$ is a point of continuity of $\dLa$, 
then $\dLa_t=\lim_{n\to\infty} \Gamma_t^{(n)}[{ 0}] \le \Lambda_t$, and further $\dLa\preceq \Lambda$ by the right continuity of $\dLa$. This proves that $\dLa$ is the minimal solution. 

\medskip

\noindent \rmiii For any solution $(X,\Lambda)$ of \eqref{dyn:lim} that is different to $(\dX,\dLa)$,  there exist $v>u>0$ and $b>a>0$ such that $\Lambda_t >b>a>\dLa_t$ for all $t\in [u,v]$. Recall that $\underline{\tau}:=\inf\{t\ge 0: \dX_t\le 0\}$ 
and $\tau:=\inf\{t\ge 0: X_t\le 0\}$, and we want to show 
\b*
\dLa_{\infty}<\Lambda_{\infty} &\Longleftrightarrow& \P(\tau=\infty)<\P(\underline\tau=\infty),
\e*
which is implied by the fact that $\P(\tau<\infty=\underline\tau)>0$. 
Denote by $Y_t:=Z+\beta t+ B_t$ for $t\ge 0$, then
\b*
\{\tau<\infty=\underline \tau\} \supseteq \left\{Y_t>\dLa_t: \forall t\in [0,u]\right\} \cap \left\{Y_t\in [a,b]: \forall t\in [u,v]\right\}  \cap\left \{Y_t>\dLa_t: \forall t\ge v\right\}, 
\e*
which yields the desired result by the Markov property. 
\end{proof}

\section{Proof of Theorems \ref{thm:limit} and \ref{thm:reg}}
\label{sc4}

\subsection{Proof of Theorem \ref{thm:limit}}
\label{sc41}

\quad Recall that $\Cc$ (resp. $\M$) is endowed with the metric $d_u$ (resp. $\hr$).
Let $\Xi:= \Cc \times \M$ be endowed with the product topology, and we denote the corresponding  metric  by $d_u\otimes \hr$. Then $\Xi$ is Polish. Define accordingly the map $\iota:  \Xi\to \Dc$ by $\iota(f,\ell):=f-G(\ell)$. 
For every $t\ge -1$, denote by $\lambda_t$ the path functionals  on $\Dc$:
\b*
\lambda_t(x) := \mathds 1_{\{\tau(x)\le t\}} &\mbox{with}&
\tau(x):=\inf\{s\ge 0: x_s\le 0\}.   
\e*
Set $\lambda(x):=(\lambda_t(x), \, t \ge -1) \in \M$. We say that $x$ satisfies the {\em crossing property} if
  \b*
  \inf_{0\le s\le h} x_{\tau+s}-x_{\tau} < 0,\quad \mbox{for all }  h>0,
  \e*
where we abbreviate $\tau(x)$ by $\tau$. 
The lemma below summarizes the properties of $\iota$ and $\lambda$, which is an adaptation from \cite{NS2019}. 
We provide its proof for completeness. 
\begin{lemma}
\label{lem:conti}
\begin{enumerate}
\item[(i)]
The embedding $\iota:(\Xi,d_u\otimes\hr)\to(\Dc,d_m)$ is continuous. 
\item[(ii)]
Let $x\in \iota(\Xi)$ satisfy the crossing property. Then for any sequence $(x^n, \, n \ge 1) \subset \iota(\Xi)$ converging to $x$ under $d_m$, 
we have $\lim_{n\to\infty}\lambda_t(x^n)=\lambda_t(x)$ for all $t\ge 0$ in a co-countable set.  
\item[(iii)]
Assume that $(\eta^n)_{n\ge 1}\subset\Pc(\Xi)$ is a convergent sequence with limit $\eta$. Define $\nu^n:=\iota(\eta^n)$
 and $\nu:=\iota(\eta)$. If $\nu-$almost every path satisfies the crossing property, i.e. 
 \b*
 \nu\left(\left\{x\in\Dc:~   \inf_{0\le s\le h} x_{\tau+s}-x_{\tau}<0,~ \forall h>0\right\}\right) = 1,
 \e*
then 
\b*
\left(\int_{\Dc}\lambda_t(x)\d \nu^n(x), \, t \ge -1 \right)=:\langle \nu^n, \lambda \rangle\stackrel{\hr}{\to} \langle \nu,
 \lambda \rangle:=\left(\int_{\Dc}\lambda_t(x)\d \nu(x), \, t \ge -1\right).
 \e* 
 \end{enumerate}
\end{lemma}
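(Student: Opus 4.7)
The plan is to handle the three items in order: (i) gives continuity of the embedding $\iota$, (ii) is the key ``continuity of the hitting functional'' statement exploiting the crossing property, and (iii) combines them via Skorokhod's representation theorem. For \textbf{part (i)}, take $(f^n,\ell^n)\to (f,\ell)$ in $(\Xi,d_u\otimes\hr)$. Since $f^n\to f$ locally uniformly, $f^n\to f$ in $d_m$ as well. Lemma \ref{lem:metrics} gives $\ell^n_t\to\ell_t$ at every continuity point $t$ of $\ell$ together with $\ell^n_\infty\to\ell_\infty$; by continuity of $G$, the same pointwise statement holds for $G(\ell^n)\to G(\ell)$, and monotonicity of these sequences upgrades this to $M_1$-convergence (Whitt \cite[Ch.~12]{Whitt2002}). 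Finally, the $M_1$ topology is stable under addition of a locally-uniformly convergent continuous summand, yielding $\iota(f^n,\ell^n)=f^n-G(\ell^n)\to f-G(\ell)=\iota(f,\ell)$ in $d_m$.

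For \textbf{part (ii)}, the function $t\mapsto\lambda_t(x)$ is a $\{0,1\}$-valued step function jumping at most once, at $\tau:=\tau(x)$, so its continuity points in $[0,\infty)$ form the co-countable set $[0,\infty)\setminus\{\tau\}$. Fix $t\ne\tau$. If $t<\tau$, then $x_s>0$ on $[0,t]$; since any $x\in\iota(\Xi)$ has only downward jumps (as $G(\ell)$ is non-decreasing), a short c\`adl\`ag argument rules out $\inf_{s\le t}x_s=0$, yielding $m:=\inf_{s\le t}x_s>0$. $M_1$-convergence $x^n\to x$ then forces $\inf_{s\le t}x^n_s>m/2>0$ for $n$ large, so $\lambda_t(x^n)=0=\lambda_t(x)$. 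If $t>\tau$, the crossing property provides $s^*\in(\tau,t)$ with $x_{s^*}<0$, and the analogous $M_1$ argument gives $\inf_{s\le t}x^n_s<0$ for $n$ large, so $\lambda_t(x^n)=1=\lambda_t(x)$.

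For \textbf{part (iii)}, since $\Xi$ is Polish and $\eta^n\Rightarrow\eta$, Skorokhod's representation theorem yields $W^n\sim\eta^n$ and $W\sim\eta$ on a common probability space with $W^n\to W$ almost surely; by (i), $\iota(W^n)\to\iota(W)$ almost surely in $(\Dc,d_m)$. Lemma \ref{lem:metrics} reduces the desired $\hr$-convergence $\langle\nu^n,\lambda\rangle\to\langle\nu,\lambda\rangle$ to pointwise convergence at every continuity point $t$ of $s\mapsto\langle\nu,\lambda\rangle_s=\nu(\tau(\cdot)\le s)$. At such a $t$ one has $\nu(\tau=t)=0$, which combined with the crossing hypothesis means $\iota(W)$ almost surely satisfies both $\tau(\iota(W))\ne t$ and the crossing property. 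Part (ii) then yields $\lambda_t(\iota(W^n))\to\lambda_t(\iota(W))$ almost surely, and bounded convergence delivers $\langle\nu^n,\lambda\rangle_t=\E[\lambda_t(\iota(W^n))]\to\E[\lambda_t(\iota(W))]=\langle\nu,\lambda\rangle_t$, as required.

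The main obstacle is (ii): the $M_1$ topology permits ``vertical expansion'' at jumps, so $x^n\to x$ in $d_m$ does not generically force $\inf_{s\le t}x^n_s\to\inf_{s\le t}x_s$. The crossing property is precisely the ingredient that excludes the scenario where $x$ merely grazes the axis while $x^n$ stays strictly above, and converting this intuition into the sharp one-sided bounds used above requires careful use of the parametric representations underlying $d_m$. Once (ii) is in hand, (i) is routine from the classical $M_1$ calculus and (iii) is a standard Skorokhod-coupling argument.
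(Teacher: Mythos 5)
Your proof is correct in substance and follows the same overall architecture as the paper's (monotone convergence plus $M_1$-stability of addition for (i), the crossing property to exclude the grazing scenario in (ii), Skorokhod representation plus bounded convergence for (iii)), but it deviates in two places worth comparing. In (ii), the paper restricts attention to the co-countable set $J$ of continuity points of $x$ and invokes Whitt's Theorem 13.4.1, which gives full convergence $\inf_{-1\le s\le t}x^n_s\to\inf_{-1\le s\le t}x_s$ at such $t$, so the only remaining bad point is $\tau(x)$; you instead work at \emph{every} $t\ne\tau(x)$ and rely on one-sided bounds ($\liminf_n\inf_{s\le t}x^n_s>0$ when $t<\tau$, and $\inf_{s\le t}x^n_s<0$ eventually when $t>\tau$). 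These one-sided semicontinuity statements are true under $d_m$ (values of $x^n$ near time $t$ must lie near the completed graph of $x$ at nearby times, and conversely each graph point of $x$ is approximated; your positivity claim also needs $\inf_{[0,t+\eps]}x>0$, which holds since $t<\tau$ strictly and jumps are downward), but you assert them and explicitly defer the parametric-representation verification — this is exactly the step the paper discharges by citation, so you should either carry it out or restrict to continuity points of $x$ as the paper does; as written it is the one soft spot, though not a wrong step. In (iii) your route is in fact cleaner than the paper's: instead of extracting a limit by compactness of $\M$ and testing against time-integrated functions $g$, you use Lemma \ref{lem:metrics} (3)$\Leftrightarrow$(4) and prove pointwise convergence at each continuity point $t$ of $\langle\nu,\lambda\rangle$ via $\nu(\tau=t)=0$; note this uses that the exceptional set in your proof of (ii) is exactly $\{\tau(x)\}$ (the bare statement of (ii), with an unspecified co-countable set, would not suffice at a fixed $t$), which is fine since you proved that stronger form. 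One minor slip in (i): $\hr$-convergence does \emph{not} give $\ell^n_\infty\to\ell_\infty$ (Lemma \ref{lem:metrics} only yields convergence at continuity points), but you never actually need this, since $d_m$-convergence is determined by restrictions to compact intervals.
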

\begin{proof}
\rmi Let $(f^n,\ell^n)\to (f,\ell)$ in $\Xi$ and $T$ be any point of continuity of $\ell$. 
Then we have $f^n\to f$ in $\Cc([-1,T])$, and thus in $\Dc([-1,T])$. 
Moreover, as $G(\ell^n)$ and $G(\ell)$ are non-decreasing and $\lim_{n\to\infty}G(\ell^n_t)=G(\ell_t)$ for each point of continuity $t$ of $G(\ell)$,  \cite[Corollary 12.5.1]{Whitt2002} shows that $G(\ell^n)\to G(\ell)$ in $\Dc([-1,T])$. 
The conclusion follows from \cite[Theorem 12.7.3]{Whitt2002}.

\medskip

\noindent \rmii Let $J$ be a co-countable set consisting of all points of continuity of $x$. 
Recall that \cite[Theorem 13.4.1]{Whitt2002} ensures $\lim_{n\to\infty} \inf_{-1\le s\le t}x^n_s =  \inf_{-1\le s\le t}x_s$ 
for all $t\in J$.
Hence, for any $t\in J$ such that $\inf_{-1\le s\le t}x_s\neq 0$ we have
\b*
\lim_{n\to\infty}\lambda_t(x^n) = \lim_{n\to\infty} {\mathds 1}_{\{\inf_{-1\le s\le t}x^n_s\le 0\}} =  {\mathds 1}_{\{\inf_{-1\le s\le t}x_s\le 0\}}=\lambda_t(x). 
\e*
Now let  $t\in J$ with $\inf_{-1\le s\le t}x_s=0$. Then $t\ge \tau(x)$ by definition. If $t>\tau(x)$, then the crossing property implies that $\inf_{-1\le s\le t}x_s<0$, which leads to a contraction. 
It follows that the only point in $J$ where the convergence may fail is $\tau(x)$, so $J\setminus \{\tau(x)\}$ is still a co-countable set on which we have the desired convergence.

\medskip

\noindent \rmiii For ease of presentation, denote $\langle \nu^n,\lambda_t \rangle=:\ell^n_t$ and $\ell^n:=(\ell^n_t)_{t\ge 0}\in\M$. 
By the compactness of $\M$,
we may assume that $\ell^n_t\to\ell_t$ for some $\ell\in\M$ at every point of continuity $t$ of $\ell$.
Then it suffices to prove $\ell=\langle \nu,\lambda\rangle$. Let $T>0$ and $g:[0,T]\to\R$ be bounded and measurable. 
The dominated convergence theorem implies 
 \b*
 \lim_{n\to\infty}\int_0^T\ell^n_tg_t\d t = \int_0^T\ell_tg_t\d t. 
 \e*
 By the Skorokhod representation theorem (see e.g. \cite[Theorem 4.30]{Kallenberg02}), we can write 
 \b*
\int_0^T\ell^n_tg_t\d t =\E\left[ \int_0^T\lambda_t(Y^n)g_t\d t\right], 
 \e*
 where $Y^n$ converges almost surely in $\Dc$ to $Y$ with $\Lc(Y^n)= \nu^n$ and $\Lc(Y)=\nu$. 
 By {\rm (ii)}, it holds with probability one that
 \b*
 \lim_{n\to\infty} \int_0^T\lambda_t(Y^n)g_t\d t = \int_0^T\lambda_t(Y)g_t\d t. 
 \e*
Using the dominated convergence theorem again, we get
 \b*
 \int_0^T\ell_t g_t \d t= \lim_{n\to\infty} \E\left[\int_0^T\lambda_t(Y^n)g_t\d t\right] = \E\left[\int_0^T\lambda_t(Y)g_t \d t\right]= \int_0^T\langle \nu,\lambda_t\rangle g_t\d t.
 \e*
It follows from the right-continuity of $\ell$ that 
 $\ell=\langle \nu,\lambda\rangle$ for all $t\le T$. 
Since $T>0$ is arbitrary, the claim follows.
\end{proof}

Let $(X^{N,1},\ldots, X^{N,N}, L^N)$ be a solution to \eqref{dyn:particle}. 
We define the empirical measures $(\xi^N, \, N \ge 1)$ and $(\mu^N, \, N \ge 1)$ taking values in $\Pc(\Xi)$ and $\Pc(\Dc)$ by
 \be\label{def:empirical}
  \xi^N:=\frac{1}{N}\sum_{i=1}^N\delta_{(F^{N,i}, L^N)} &\mbox{and}&  \mu^N:=\frac{1}{N}\sum_{i=1}^N\delta_{X^{N,i}},
 \ee  
 where $F^{N,i}:=(Z^{N,i}+\beta t+ B^i_t, \, t \ge 0)$. 
By definition, we have $\mu^N=\iota(\xi^N)$ for all $N\ge 1$. 
We need a few preliminary results on the sequence of random measures $(\xi^N, \, N \ge 1)$. 
\begin{lemma}\label{prop:tight}
The sequence $(\xi^N)_{N\ge 1}$ is tight under Assumption \ref{ass1}.
\end{lemma}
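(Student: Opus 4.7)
The plan is to apply the standard criterion that a sequence $(\xi^N)_{N\ge 1}$ of random probability measures on a Polish space $\Xi$ is tight in $\Pc(\Pc(\Xi))$ if and only if the intensity measures $\overline{\xi}^N := \E[\xi^N] \in \Pc(\Xi)$ form a tight family. This reduction is a routine consequence of Prohorov's theorem together with Markov's inequality: given a compact $K \subset \Xi$ with $\sup_N \overline{\xi}^N(K^c) \le \eps^2$, one has $\P(\xi^N(K^c) > \eps) \le \eps$ uniformly in $N$, from which tightness of $(\xi^N)$ in $\Pc(\Pc(\Xi))$ follows by a diagonal argument. Thus the task is reduced to showing that $(\overline{\xi}^N)_{N\ge 1}$ is tight in $\Pc(\Xi) = \Pc(\Cc \times \M)$.

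For the product space $\Xi = \Cc \times \M$, I would then separately control the two marginals. The second marginal is handled for free: by Lemma \ref{lem:metrics}, the space $(\M, \hr)$ is compact, so any family of laws on $\M$ is automatically tight. Hence it remains to show that the first marginal $\overline{\xi}^N \circ \pi_{\Cc}^{-1}$ is tight in $\Pc(\Cc)$. By the exchangeability of $(Z^{N,i}, B^i)_{i=1,\ldots,N}$, this marginal coincides with the single law of
\[
F^{N,1}_t = Z^{N,1} + \beta t + B^1_t, \qquad t\ge 0,
\]
extended continuously to $[-1,\infty)$ by the constant value $Z^{N,1}$ on $[-1,0]$.

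To obtain tightness of $\{ \mathrm{Law}(F^{N,1}) \}_{N \ge 1}$ on $\Cc$, I would decompose $F^{N,1}$ via the continuous addition map $\Psi:\R\times \Cc \to \Cc$, $\Psi(z,w)_t := z + w_t$, applied to the independent pair $(Z^{N,1}, W^1)$ where $W^1_t := \beta t + B^1_t$. Under Assumption \ref{ass1}, the weak convergence $\theta^N \to \theta$ in $\Pc(\R)$ makes $(\theta^N)_{N\ge 1}$ tight on $\R$, while the law of $W^1$ is a fixed Borel probability measure on $\Cc$ (tight by Prohorov). Continuity of $\Psi$ preserves tightness of the product laws, yielding tightness of $\{ \mathrm{Law}(F^{N,1}) \}_{N \ge 1}$ in $\Pc(\Cc)$ and thereby completing the proof.

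I do not anticipate any substantive obstacle: the proof is essentially tightness bookkeeping that leverages the compactness of $(\M,\hr)$ from Lemma \ref{lem:metrics} to trivialize the loss-component marginal, leaving only the straightforward tightness of drifted Brownian motion with tight initial data. The only mild subtlety is the cosmetic extension of $F^{N,1}$ to $[-1,\infty)$ so that it takes values in $\Cc$, which has no effect on tightness since $[-1,0]$ is compact and the extension is constant.
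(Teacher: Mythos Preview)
Your proof is correct and follows essentially the same route as the paper: reduce tightness of the random empirical measures to tightness of a single particle's law (the paper cites Sznitman's Proposition~2.2, which is exactly your intensity-measure/Markov argument specialized to exchangeable systems), then use compactness of $(\M,\hr)$ to trivialize the second marginal and Assumption~\ref{ass1} to get tightness of $F^{N,1}$ in $\Cc$. Your write-up is more explicit about the intermediate steps, but the underlying ideas coincide.
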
 
\begin{proof}
The assumption yields the tightness of $(Z^{N,1}+\beta t+B^1_t, \, t \ge 0)$, $N\ge 1$, and thus that of 
$((Z^{N,1}+\beta t+B^1_t, \, t \ge 0), \, L^N)$, $N \ge 1$ as $(\M,\hat d)$ is compact. 
By Sznitman's theorem (see e.g. \cite[Proposition 2.2]{Sznit91}),  $(\xi^N, \, N \ge 1)$ is tight. 
\end{proof}
 \begin{proposition}\label{lem:lim}
 Let $\xi$ be any limit point of $(\xi^N)_{N\ge 1}$ and $\mu:=\iota(\xi)$. 
\begin{enumerate}
\item[(i)]
For almost every realization $\omega$, if  $\Lc((F,L))=\xi(\omega)$, then $(F_t-F_0-\beta t, \, t \ge 0)$ is a Brownian motion with respect to the filtration generated by $(F,L)$ In particular,  $(F_t-F_0-\beta t, \, t \ge 0)$ is independent of $F_0$.
 \item[(ii)]
 $\mu$ satisfies the crossing property with probability one.
\end{enumerate}
\end{proposition}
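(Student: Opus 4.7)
The plan is to establish (i) via a martingale-problem argument at the level of the limiting random measure $\xi$, then deduce (ii) from the strong Markov property of the Brownian motion produced in (i). Working under a Skorokhod representation, I may assume that along the chosen subsequence $\xi^N \to \xi$ almost surely as random elements of $\Pc(\Xi)$.

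For part (i), the goal is to show that under $\xi(\omega)$ the process $M_t := F_t - F_0 - \beta t$ is a Brownian motion with respect to the canonical filtration $\Gc_t := \sigma(F_r, L_r : r \le t)$. Since $F$ has continuous paths, L\'evy's characterization reduces the problem to checking that $M$ and $(M_t^2 - t)_{t\ge 0}$ are $\Gc$-martingales under $\xi(\omega)$. I would fix $0 \le s < t$ and a bounded continuous $\Gc_s$-measurable test function $\Psi$ on $\Xi$, and compute
\[
\int \big(F_t - F_s - \beta(t-s)\big)\Psi(F, L)\, d\xi^N = \frac{1}{N}\sum_{i=1}^N (B^i_t - B^i_s)\Psi(F^{N,i}, L^N).
\]
Because $\Psi(F^{N,i}, L^N)$ is $\Fc_s$-measurable while $B^i_t - B^i_s$ is independent of $\Fc_s$ with mean zero, conditioning on $\Fc_s$ kills the first moment and, for $i\ne j$, the cross second moments, so the right-hand side has mean zero and variance of order $1/N$, hence vanishes in $L^2$. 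Combined with continuity of the integrand, uniform integrability from the $L^2$ bounds on Brownian increments, and the convergence $\xi^N \to \xi$, one obtains $\int (F_t - F_s - \beta(t-s))\Psi(F, L)\, d\xi = 0$ almost surely. A parallel computation with $(F_t - F_s - \beta(t-s))^2 - (t-s)$ handles the quadratic-variation identity. Running $(s, t, \Psi)$ through a countable convergence-determining class, intersecting null sets, and finishing with a monotone-class extension promote these identities to arbitrary bounded $\Gc_s$-measurable $\Psi$, and L\'evy's theorem then identifies $M$ as a $\Gc$-Brownian motion under $\xi(\omega)$. Independence from $F_0 \in \Gc_0$ is then automatic.

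For part (ii), by (i) the canonical path under $\xi(\omega)$ admits the decomposition $X_t = F_0 + \beta t + M_t - G(L_t)$ with $M$ a $\Gc$-Brownian motion, and $\tau = \tau(X)$ is a stopping time for the right-continuous augmentation of $\Gc$ because $X$ is c\`adl\`ag and $\Gc$-adapted. On $\{\tau < \infty\}$, monotonicity of $G \circ L$ gives the pointwise bound $X_{\tau + s} - X_\tau \le \beta s + (M_{\tau + s} - M_\tau)$, whence
\[
\inf_{0\le s\le h}(X_{\tau+s} - X_\tau) \le \inf_{0\le s\le h}\big(\beta s + M_{\tau+s} - M_\tau\big).
\]
Strong Markov at $\tau$ makes $(M_{\tau+s} - M_\tau)_{s\ge 0}$ a Brownian motion started from $0$; by Blumenthal's zero--one law, the right-hand side is strictly negative almost surely for every $h>0$, and intersecting over a rational sequence $h \downarrow 0$ yields the crossing property for $\xi(\omega)$-a.e. path, hence for $\mu(\omega)$-a.e. path. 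The main obstacle I foresee lies in part (i): the cylindrical test functionals are unbounded in $F$, so the passage to the weak limit has to be combined with a uniform integrability argument, and the $\Fc_s$-measurability of $\Psi(F^{N,i}, L^N)$ must be tracked carefully because $L^N$ aggregates all $N$ Brownian motions rather than just the $i$-th one.
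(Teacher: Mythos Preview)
Your proposal is correct and follows essentially the same route as the paper: part (i) is handled by showing the second moment of $\int (F_t-F_s-\beta(t-s))\Psi(F,L)\,d\xi^N$ is $O(1/N)$ via the independence of Brownian increments from $\Fc_s$ (and the vanishing of cross terms), then invoking L\'evy's characterization after treating the quadratic-variation process analogously; part (ii) uses exactly the monotonicity bound $X_{\tau+s}-X_\tau\le \beta s+(M_{\tau+s}-M_\tau)$ together with the strong Markov property and the a.s.\ negativity of the running minimum of a Brownian motion started at zero. Your acknowledged obstacles are real but minor: the paper sidesteps the unboundedness issue by bounding the $L^2$ norm of the $\xi^N$-integral directly (so no weak-convergence-plus-UI step is needed for that part), and the $\Fc_s$-measurability of $\Psi(F^{N,i},L^N)$ is immediate since $\Fc_s$ is generated by \emph{all} $N$ Brownian motions, to which $L^N$ is adapted.
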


 \begin{proof}
\rmi For any $t>s\ge 0$, $n\in\N$, $0\le s_1\le \cdots\le s_n\le s$ and continuous and bounded functions $\phi_k: \R^2\to\R$, denote $\Phi(f,\ell):=\Pi_{k=1}^n \phi_k(f_{s_k}, \ell_{s_k})$ for all $(f,\ell)\in \Xi$. 
We have
 \b*
&& \E\left[\left(\int_{\Xi}(f_t-f_s-\beta (t-s))G(f,\ell) \d\xi^N(f,\ell)\right)^2\right] \\
&=&  \E\left[\left(\frac{1}{N}\sum_{i=1}^N (B^i_t-B^i_s)\Phi(W^{N,i}, L^N)\right)^2\right] \\
&=&  \frac{1}{N} \E\left[\left((B^1_t-B^1_s)\Phi(W^{N,1}, L^N)\right)^2\right]+  \frac{N^2-N}{N^2}\E\left[(B^1_t-B^1_s)(B^2_t-B^2_s)\Phi(W^{N,1}, L^N)\Phi(W^{N,2}, L^N)\right] \\
&=&  \frac{1}{N} \E\left[\left((B^1_t-B^1_s)\Phi(W^{N,1}, L^N)\right)^2\right] \le \frac{\|\Phi\|_{\infty}(t-s)}{N} \longrightarrow 0,
 \e*
 where $W^{N,i}=(W^{N,i}_t, \,  t \ge 0)$ with $W^{N,i}_t:=Z^{N,i}+\beta t+ B^i_t$. 
Thus, for almost every realization $\omega$, if $(F,L)\sim \xi(\omega)$, then $(F_t-F_0-\beta t, \, t \ge 0)$ is a martingale with respect to the filtration generated by $(F,L)$. 
Furthermore, repeating the same arguments to the process $((F_t-F_0-\beta t)^2-t, \,  t \ge 0)$, we obtain that 
  \b*
 \E\left[\left(\int_{\Xi}[(f_t-f_0-\beta t)-(f_s-f_0-\beta s)-(t-s)]\Phi(f,\ell) \d\xi(f,\ell)\right)^2\right] = 0,
\e*
which implies that $((W_t-W_0-\beta t)^2- t, \, t \ge 0)$ is also a martingale with respect to the filtration generated by $(W,L)$. We conclude by  L\'evy's characterization (see e.g. \cite[Theorem 18.3]{Kallenberg02}).
 
 \vspace{1mm}
 
\noindent  \rmii For simplicity, we assume that $\Lc(\mu^N)$ converges to $\Lc(\mu)$. Note that $\Lc(\mu)=\Lc(\iota(\xi))$. 
For any fixed $h>0$, we have
  \b*
&&  \E\left[ \mu\left(\left\{x\in\Dc: \inf_{0\le s\le h} x_{\tau+s}-x_{\tau} =0\right\} \right)\right] \\
  &=& \E\left[ \xi\left(\left\{(f,\ell)\in \Xi: \inf_{0\le s\le h} f_{\tau+s}-f_{\tau} - (G(\ell_{\tau+s})-G(\ell_{\tau})) =0\right\} \right)\right] \\
  &\le &  \E\left[ \xi\left(\left\{(f,\ell)\in \Xi: \inf_{0\le s\le h} f_{\tau+s}-f_{\tau} =0\right\} \right)\right]. 
  \e*
As for almost every realization $\omega$, if $\Lc((F,L))=\xi(\omega)$, then $\tau=\tau(F-G(L))$ is a stopping time   with
respect to the filtration generated by $(F,L)$. Since $F-F_0$ is a Brownian motion with
respect to the same filtration, the strong Markov property yields
\b*
 \E\left[ \xi\left(\left\{(f,\ell)\in \Xi: \inf_{0\le s\le h} f_{\tau_0+s}-f_{\tau_0} =0\right\} \right) \right]=\P\left(\inf_{0\le s\le h}F_s=0\right)=0. 
\e*
Hence, 
  \b*
\mu\left(\left\{x\in\Dc: \inf_{0\le s\le h} x_{\tau_0+s}-x_{\tau_0} =0\right\} \right)=0,
\e*
holds almost surely. Repeating this reasoning for $h =h_n$ for a sequence $(h_n, \, n \ge 1) \subset \R_+$ such that $\lim_{n\to\infty}h_n=0$ yields the result.
\end{proof}

Following the arguments in \cite{DIRT2015b},
we have the following proposition which establishes the convergence of random measures. 
(see also \cite{CRS2020, NS2019}).

\begin{proposition}
The sequence $(\xi^N, \, N \ge 1)$ is tight under Assumption \ref{ass1}. 
Moreover, let $\xi$ be any limit point. Then for almost every realization $\omega$, $\xi(\omega)$ coincides with the law of $(F,\Lambda)$, where $(F-G(\Lambda), \Lambda)$ is a solution to \eqref{dyn:lim}.
\end{proposition}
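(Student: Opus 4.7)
The plan is to exploit the tightness already established in Lemma \ref{prop:tight} and to identify any subsequential limit by combining the two structural facts from Proposition \ref{lem:lim} with the pre-limit identity $L^N=\Gamma_N[L^N]$. Concretely, I would extract a weakly convergent subsequence $\xi^{N_k}\to \xi$ in $\Pc(\Xi)$, push forward through the continuous embedding $\iota$ provided by Lemma \ref{lem:conti}\rmi to obtain $\mu^{N_k}\to \mu$ in $\Pc(\Dc)$, and then take the limit in
\b*
L^N_t = \frac{1}{N}\sum_{i=1}^N\mathds{1}_{\{\tau^N_i\le t\}} = \langle \mu^N, \lambda_t\rangle
\e*
to identify the limiting object as a solution of \eqref{dyn:lim}.

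The first substep is to show that the second marginal of $\xi$ is almost surely a Dirac mass. Since every particle in $\xi^N$ shares the common second component $L^N$, the second marginal is $\delta_{L^N}\in\Pc(\M)$; as the set of Dirac measures is closed in $\Pc(\M)$ and $\ell\mapsto\delta_\ell$ is continuous, any weak limit must take the form $\delta_\Lambda$ for some $\F$-measurable $\Lambda\in\M$, and in particular $L^{N_k}\stackrel{\hr}{\to}\Lambda$ in distribution. The second substep invokes Proposition \ref{lem:lim}\rmi to write $F_t=Z+\beta t+B_t$ under $\xi(\omega)$, where $B$ is a Brownian motion independent of $Z:=F_0$, while Assumption \ref{ass1} forces $Z\sim\theta$. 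The third and most substantial substep is the passage to the limit in $L^N=\langle \mu^N,\lambda\rangle$: Proposition \ref{lem:lim}\rmii supplies the crossing property $\mu$-almost surely, which is precisely the hypothesis of Lemma \ref{lem:conti}\rmiii, and that lemma then gives $\langle \mu^{N_k},\lambda\rangle \stackrel{\hr}{\to}\langle \mu,\lambda\rangle$, whence $\Lambda=\langle \mu,\lambda\rangle$ almost surely. Unwinding the definitions, $\Lambda_t=\mu(\{x:\tau(x)\le t\})=\P(\tau\le t)$ under $\xi(\omega)$ with $\tau=\inf\{s\ge 0: F_s-G(\Lambda_s)\le 0\}$, which is exactly \eqref{dyn:lim} for $(X,\Lambda)=(F-G(\Lambda),\Lambda)$.

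The principal obstacle is that Lemma \ref{lem:conti}\rmiii is formulated for deterministic sequences of measures, whereas the $\mu^N$ here are random elements of $\Pc(\Dc)$. My remedy is to invoke Skorokhod's representation on the Polish space $\Pc(\Xi)$ so that $\xi^{N_k}\to\xi$ holds almost surely on an enlarged probability space, apply Lemma \ref{lem:conti}\rmiii pathwise, and then transfer the conclusion back. A subtle point is the joint behaviour of $(L^{N_k},\langle \mu^{N_k},\lambda\rangle)$: since these two processes coincide identically in $N$, the Skorokhod coupling forces them to share a common almost-sure limit, so the identity $\Lambda=\langle \mu,\lambda\rangle$ comes out without any additional independence argument.
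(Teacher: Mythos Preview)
Your proposal is correct and follows essentially the same path as the paper: tightness from Lemma~\ref{prop:tight}, an (implicit in the paper, explicit in your proposal) Skorokhod representation on $\Pc(\Xi)$, the crossing property from Proposition~\ref{lem:lim}\rmii feeding into Lemma~\ref{lem:conti}\rmiii to pass to the limit in $\langle\mu^N,\lambda\rangle$, and Proposition~\ref{lem:lim}\rmi together with Assumption~\ref{ass1} to recover the Brownian structure and the initial law $\theta$.

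The one point where you diverge from the paper is the identification $\Lambda=\langle\mu,\lambda\rangle$. The paper works pointwise on a co-countable set $J$ of continuity points of $t\mapsto\E[\langle\mu,\lambda_t\rangle]$ and $t\mapsto\E[\int\ell_t\,\d\xi]$, and shows $\E\big[\int_\Xi|\ell_t-\langle\mu,\lambda_t\rangle|\,\d\xi\big]=0$ for $t\in J$ by approximating with $\xi^N$ and using that $\ell=\langle\mu^N,\lambda\rangle$ $\xi^N$-a.e. Your route is more direct: you observe that the second marginal of $\xi^N$ is the Dirac $\delta_{L^N}$, so the second marginal of $\xi$ is a.s.\ a Dirac $\delta_\Lambda$; after Skorokhod both $\tilde L^{N_k}$ and $\langle\tilde\mu^{N_k},\lambda\rangle$ converge in $(\M,\hr)$, and since they coincide for every $N$ (this being a statement about $\Lc(\xi^N)$ alone, hence preserved under the coupling), their limits agree. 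This avoids the introduction of $J$ and the $L^1$ computation, at the price of making the Skorokhod step explicit; the paper's version is closer to the style of \cite{DIRT2015b,CRS2020}, while yours is a little cleaner.
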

\begin{proof}
The tightness of $(\xi^N, \, N \ge 1)$ follows from  Lemma \ref{prop:tight}. 
Without loss of generality, we may assume the convergence of
$(\xi^N, \, N \ge 1)$ for ease of presentation. 
Then we get $\mu^N\to \mu:=\iota(\xi)$ by the continuity of $\iota$. 
Consider the maps 
 \b*
 t\mapsto \E[\langle \mu, \lambda_t\rangle] &\mbox{and}& t\mapsto \E\left[\int_{\Xi} \ell_t \d\xi(f,\ell)\right],
  \e*
 that are non-decreasing and thus have at most countably discontinuities. 
 Denote by $J$ the co-countable set of their points of continuity.   
 Combining Lemma \ref{lem:conti}  {\rm (iii)} and Proposition \ref{lem:lim} {\rm (iii)}, 
 we deduce that $ \langle \mu^N, \lambda\rangle \to \langle \mu, \lambda\rangle$ in $\M$ almost surely. 
 For any $t\in J$, we have $\E[\langle \mu, \lambda_t\rangle]=\E[\langle \mu, \lambda_{t-}\rangle]$,
 and thus $\langle \mu, \lambda_t\rangle=\langle \mu, \lambda_{t-}\rangle$ almost surely. 
 Therefore, $ \langle \mu^N, \lambda\rangle \to \langle \mu, \lambda\rangle$ in $\M$ which yields 
  \be\label{eq:cont}
  \lim_{N\to\infty} \langle \mu^N, \lambda_t\rangle=\langle \mu, \lambda_t\rangle.
  \ee
Let $\pi_0:\Xi\to\R$ be the projection defined by $\pi(f,\ell)=f_0$. By definition, $\pi_0$ is continuous, and hence
\b*
\lim_{N\to\infty} \frac{1}{N}\sum_{i=1}^N\delta_{Z^{N,i}} = \lim_{N\to\infty} \pi_0(\xi^N) = \pi_0(\xi), 
\e* 
which implies that for almost every realization $\omega$, $\Lc(\pi_0(\xi(\omega)))=\lim_{N\to\infty}\theta^N=\theta$. 
By Lemma \ref{lem:lim} {\rm (i)}, for almost every realization $\omega$, if $\Lc(F,L)=\xi(\omega)$, then $(F_t-\beta t, \, t \ge 0)$ is Brownian motion with $\Lc(F_0)=\theta$. Hence, it remains to prove
\b*
\Lambda_t = \mathbb P\big(\tau(F-G(L))\le t\big) = \langle \mu(\omega),\lambda_t\rangle,\quad \mbox{for all } t\ge 0.
\e*
To this end, note for every $t\in J$,
  \b*
  \E\left[ \int_{\Xi}\big| |\ell_t-\langle \mu, \lambda_t \rangle| - |\ell_t-\langle \mu^N, \lambda_t \rangle| \big| \d\xi^N(f,\ell)\right] \le \E\left[|\langle \mu, \lambda_t \rangle -\langle \mu^N, \lambda_t \rangle|\right]
  \e* 
  which vanishes as $N\to\infty$ by \eqref{eq:cont} and the dominated convergence theorem. 
  As $t\in J$, we have $\ell_{t-}=\ell_t$ for  $\xi-$almost every $\ell$, and thus the map $\M\ni\ell\to\ell_t\in\R_+$ is continuous $\xi-$almost surely. It follows
that
  \b*
  \E\left[ \int_{\Xi} |\ell_t-\langle \mu, \lambda_t \rangle| \d\xi(f,\ell)\right] &=& \lim_{N\to\infty}  \E\left[ \int_{\Xi} |\ell_t-\langle \mu, \lambda_t \rangle| \d\xi^N(f,\ell)\right] \\
 & \le& \lim_{N\to\infty}  \E\left[ \int_{\Xi} |\ell_t-\langle \mu^N, \lambda_t \rangle| \d\xi^N(f,\ell)\right] ~~=~~ 0, 
  \e*  
 where the last equality holds since $\langle \mu^N,\lambda\rangle = \ell$ holds for $\xi^N-$almost every $\ell\in\M$, almost surely. 
The conclusion follows by letting $t$ range over a countable dense subset of $J$ and using the right-continuity of $\ell$.
\end{proof}

Now we  prove Theorem \ref{thm:limit}.
\begin{proof}[Proof of Theorem \ref{thm:limit}]
First, we show that any limit point $\xi$ is equal to $\Lc(\dX,\dLa)$ almost surely. Consider the perturbed system: for $i=1,\ldots, N$,
\b*
\tilde X_t^{L, i}=\tilde Z^{N,i}+\beta t +B^i_t - G(L_t),\quad \mbox{for all } t\ge 0
\e*
and
\b*
\tilde \Gamma_N[L]_t:=\frac{1}{N}\sum_{i=1}^N {\mathds 1}_{\{\tilde\tau^L_i\le t\}} &\mbox{and}& \tilde\tau^L_i:=\inf\{t\ge 0: \tilde X^{L,i}_t \le 0\},
\e*
where $\Lc(\tilde Z^{N,i})=\theta$ for $1\le i\le N$. Taking $L\equiv \dLa$, we have 
\b*
\tilde X^{\dLa, i}_t=\tilde Z^{N,i}+\beta t +B^i_t - G(\dLa_t),\quad \mbox{for all } t\ge 0,
\e*
and $\tilde\tau^{\dLa}_i$ are i.i.d. random variables that are identical in law to $\underline\tau:=\inf\{t\ge 0: \dX_t\le 0\}$. 
It follows from the inequality of Dvoretzky-Kiefer-Wolfowitz \cite{Massart90} that,
\b*
\mathbb P\left(\|\tilde\Gamma_N[\dLa]-\dLa\|_{\infty}>\gamma_N\right) \le 2e^{-2N\gamma_N^2},\quad \mbox{for all } N\ge 1.
\e*
For each $N\ge 1$, define $A_N:=\{\|\tilde\Gamma_N[\dLa]-\dLa\|_{\infty}\le \gamma_N\}$ and the operator $\hat{\Gamma}_N$ by 
\b*
\hat\Gamma_N[L]_t := \frac{1}{N}\sum_{i=1}^N\mathds{1}_{\{\hat\tau^{L}_i\le t\}},
\e*
where for $i=1,\ldots, N$,
\b*
\hat X^{L,i}_t = \tilde Z^{N,i}+\alpha(\gamma_N)+\beta t + B^i_t- G(L_t) &\mbox{and}&
\hat\tau^{L}_i =  \inf \big\{t\ge 0:~ \hat X^{L,i}_t\le 0\big\}. 
\e* 
In particular, $\Lc(\tilde Z^{N,i}+\alpha(\gamma_N))=\theta^N=\Lc(Z^{N,i})$ for $1\le i\le N$.
On the set $A_N$, we have
\b*
\dLa_t\ge \tilde\Gamma_N[\dLa]_t-\gamma_N,\quad \mbox{for all } t\ge 0.
\e*
Hence, $\hat\Gamma_N[0]-\gamma_N \preceq \tilde\Gamma_N[-\gamma_N]-\gamma_N\preceq \dLa$ holds as 
$-G(-\gamma_N)-\alpha(\gamma_N) \le -G(0) = 0$.
Using the monotonicity again, we get
\b*
 \hat\Gamma_N^{(2)}[0]-\gamma_N= \hat\Gamma_N[ \hat\Gamma_N[0]]-\gamma_N\preceq \tilde\Gamma_N[ \hat\Gamma_N[0]-\gamma_N]-\gamma_N \preceq  \tilde\Gamma_N[\dLa]-\gamma_N \preceq \dLa,
\e*
where the second inequality still follows from the uniform continuity of $G$, i.e. 
\b*
-G\big(\hat\Gamma_N[0]_t-\gamma_N\big) \le \alpha(\gamma_N) -G\big(\hat\Gamma_N[0]_t\big),\quad \mbox{for all } t\ge0.
\e*
Repeating this arguments, 
we obtain $\hat\Gamma_N^{(n)}[0]-\gamma_N\preceq \dLa$ for all $n\ge 1$, and finally $\dL^N - \gamma_N\preceq \dLa$. 
Hence, it holds with probability one that
\be\label{eq:limsup}
\limsup_{N\to\infty}\dL^N \preceq \dLa.
\ee
Take an arbitrary limit point $\xi$ of a convergent subsequence, still denoted by $(\xi^N, \, N \ge 1)_{N\ge 1}$ for simplicity. 
It follows that for almost every realization $\omega$, 
\b*
\frac{1}{N}\sum_{i=1}^N \delta_{\big(\dX^{N,i}(\omega),\dL^N(\omega)\big)} \to \xi(\omega) &\mbox{in}& \Pc(\Xi).
\e*
On the other hand, there exists a solution $(F,\Lambda)$ to \eqref{dyn:lim} such that $\xi(\omega)=\Lc(F,\Lambda)$. 
Therefore, $\dL^N(\omega) \stackrel{\hr}{\to} \Lambda$ and 
\b*
\dLa_t\ge \lim_{N\to\infty} \dL^N_t(\omega) = \Lambda_t \ge \dLa_t,\quad \mbox{for all } t\in J,
\e*
where $J\subset \R_+$ is the set of all points of continuity of $L$. 
Consequently, $\Lambda=\dLa$ and $\xi(\omega)=\Lc(\dX,\dLa)$. 
In particular, the dominated convergence theorem yields  
\b*
\lim_{N\to\infty} \E[\dL^N_t] = \dLa_t,\quad \mbox{for all } t\in J.
\e*
Set $c:=G(1)$. 
By symmetry, we get $\P(\tau^N_i=\infty)=\P(\tau^N_1=\infty)$ for all $1\le i\le N$. 
Thus,
\b* 
\E\left[S_{\infty}^N\right] = \frac{1}{N} \sum_{i=1}^N \P(\tau^N_i=\infty)= \P(\tau^N_1=\infty).
\e*
Consider the two coupled processes below: for all $t\ge 0$, 
\b*
 X^{N,1}_t = Z^{N,1}+\beta t +B^1_t - G(L^N_t) &\mbox{and}&
  \hat X_t =(Z^{N,1}-\alpha(\gamma_N))+\beta t +B^1_t - G(\Lambda_t) .
\e*
Set $\hat\tau:=\inf\{t\ge 0: \hat X_t\le 0\}$. It suffices to show 
$\lim_{N\to\infty}  \P(\tau^N_1=\infty) =  \P(\hat\tau=\infty)=\P(\tau=\infty)$. 
For $z\in\R$, let $p(z):=\P(z+\beta t+B^1_t>0, \, \forall t\ge 0)$. Then we have $\lim_{z\to\infty}p(z)=1$.
For each $t\ge 0$, we get
\b*
&&  \big|\mathbb P(\tau^N_1=\infty)-\mathbb P(\hat\tau=\infty)\big| \\
&& \qquad \le \big|\mathbb P(\tau^N_1>t)-\mathbb P(\tau^N_1=\infty)\big|+\big|\mathbb P(\tau^N_1>t)-\mathbb P(\hat\tau>t)\big|+\big|\mathbb P(\hat\tau>t)-\mathbb P(\hat\tau=\infty)\big| \\
&& \qquad =  \big|\mathbb P(\tau^N_1>t)-\mathbb P(\tau^N_1=\infty)\big|+\big|\E[\dL_t^N]-\dLa_t\big|+\big|\mathbb P(\hat\tau>t)-\mathbb P(\hat\tau=\infty)\big|,
\e*
where $\P(\tau^N_1>t)=1-\E[\dL_t^N]$ is by symmetry. 
Furthermore,
\b*
\big|\mathbb P(\tau^N_1>t)-\mathbb P(\tau^N_1=\infty) \big| &=& \E[\mathds 1_{\{\tau^N_1>t\}}]- \E[\mathds 1_{\{\tau^N_1>t\}}\mathds 1_{\{\tau^N_1=\infty\}}] \\
&=& \E[\mathds 1_{\{\tau^N_1>t\}}]- \E\big[\P(\tau^N_1=\infty|\tau^N_1>t)\mathds 1_{\{\tau^N_1>t\}}\big].
\e*
On the event $\{\tau^N_1>t\}$, we have 
\b*
\{\tau^N_1=\infty\} &=& \{X^{N,1}_t+\beta(s-t)+(B^1_s-B^1_t)- (G(L^N_s)-G(L^N_t))>0,\forall s\ge t \} \\
&\supset &  \{X_t^{N,1}-c+\beta(s-t)+(B^1_s-B^1_t)>0,\forall s\ge t \},
\e*
which yields by the Markov property $\P(\tau^N_1=\infty|\tau^N_1>t)\ge p(X^{N,1}_t-c)$, and thus
\b*
\big|\mathbb P(\tau^N_1>t)-\mathbb P(\tau^N_1=\infty) \big| &=& \E\big[\mathds 1_{\{\tau^N_1>t\}}\big(1-\P(\tau^N_1=\infty|\tau^N_1>t)\big)\big] \\
&\le&  \E\big[\mathds 1_{\{\tau^N_1>t\}}\big(1-p(X^{N,1}_t-c)\big)\big].
\e*
Set $Y_t:=Z^{N,1}+\beta t+B^1_t$ for all $t\ge 0$ and $\sigma:=\inf\{t\ge 0: Y_t\le 0\}$. Then $X^{N,1}_t\ge Y_t-c$ and $\tau^N_1\le \sigma$. Hence, 
\b*
\big|\mathbb P(\tau^N_1>t)-\mathbb P(\tau^N_1=\infty)  \big|\le \E\big[\mathds 1_{\{\tau^N_1>t\}}\big(1-p(X^{N,1}_t-c)\big)\big] \le \big| \E\big[\mathds 1_{\{\sigma>t\}}\big(1-p(Y_t-2c)\big)\big].
\e*
Similarly, we can establish 
$\big|\mathbb P(\hat\tau>t)-\mathbb P(\hat\tau=\infty)\big| \le \big| \E\big[\mathds 1_{\{\sigma>t\}}\big(1-p(Y_t-\alpha(\gamma_N)-2c)\big)\big]$,
and thus
\b*
 \big|\mathbb P(\tau_1=\infty)-\mathbb P(\hat\tau=\infty)\big|  \le  2\E\big[\mathds 1_{\{\sigma>t\}}\big(1-p(Y_t-\alpha(1)-2c)\big)\big] + \big|\E[\dL^N_t]-\dLa_t\big|.
\e*
As $\lim_{t\to\infty}Y_t=\infty$ holds almost surely, we have by the dominated convergence theorem,
\b*
\lim_{t\to\infty}\E\big[\mathds 1_{\{\sigma>t\}}\big(1-p(Y_t-\alpha(1)-2c)\big)\big] = 0.
\e* 
For any $\eps>0$, there exists $t_{\eps}$ such that
$\E\big[\mathds 1_{\{\sigma>t\}}\big(1-p(Y(t)-\alpha(1)-2c)\big)\big] \le \eps$ for all $t \ge  t_{\eps}$.
Fix an arbitrary $t\in J$ with $t>t_{\eps}$. Then we get
\b*
\lim_{N\to\infty}  \big|\mathbb P(\tau^N_1=\infty)-\mathbb P(\hat\tau=\infty)\big| \le  2\eps + \lim_{N\to\infty}  \big|\E[\dL_t^N]-\dLa_t\big|
=2\eps,
\e*
which yields $\lim_{N\to\infty}\E[\dS_{\infty}^N]=1-\dLa_{\infty}$. Take a sequence $(t_m)_{m\ge 1}\subset J$ satisfying $\lim_{m\to\infty }t_m=\infty$. Then
\b*
\dS_{\infty}^N=1-\dL^N_{\infty} \le 1- \dL^N_{t_m} &\Longrightarrow&  \limsup_{N\to\infty} \dS_{\infty}^N \le 1-\dLa_{t_m}.
\e*
Letting $m\to\infty$, we deduce that $\limsup_{N\to\infty}\dS_{\infty}^N \le 1-\dLa_{\infty}$.  
On the other hand, we get by Fatou's lemma that
$1-\dLa_\infty=\lim_{N\to\infty}\mathbb E[\dS_{\infty}^N] \le \mathbb E[\limsup_{N\to\infty} \dS_{\infty}^N]$, 
which implies $\limsup_{N\to\infty}\dS_{\infty}^N=1-\dLa_{\infty}$. 
We conclude by applying Fatou's lemma,
\b*
\limsup_{N\to\infty}\E[|\dS_{\infty}^N-1+\dLa_{\infty}|] \le \E\left[\limsup_{N\to\infty}|\dS_{\infty}^N-1+\dLa_{\infty}|\right] = 0,
\e*
which yields the desired result.
\end{proof}

\subsection{Proof of Theorem \ref{thm:reg}}
\label{sc42}

\quad We aim to show that the regularized problem \eqref{dyn_reg} is well-posed. 
Define the operators $\Gamma^{\eps}: \Dc\to \Dc$ as follows: for $f\in D$, let $\Gamma^{\eps}[f]\in \Dc$ be defined by
\b* 
\Gamma^{\eps}[f]_t:= 1- \E\left[\exp\left(-\frac{1}{\eps}\int_0^t (X^{f}_s)^-\d s\right)\right],
\e* 
where
\b* 
X^f_t := Z + \beta t + W_t - G(f_t)  &\mbox{and}& \tau^f :=\inf\{t\ge 0:~ X^f_t\le 0\}. 
\e*  
It is easy to see that $\Gamma^{\eps}$ takes values in $\Cc\cap\M$. By definition, $\ell^{\eps}\in \Cc\cap \M$ is a solution to \eqref{dyn_reg} if and only if $\ell^{\eps}$ is a fixed point of $\Gamma^{\eps}$. 
We start with a few technical results.

\begin{lemma} \label{lem:regularity}
Let $f\in \Dc$ be non-decreasing and set
$$A_t:=\left\{\inf_{0\le s\le t}\big(W_s-f_s\big)=0\right\}.$$
Then $A_t$ is negligible for all $t>0$.
\end{lemma}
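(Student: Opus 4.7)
The plan is to combine a stopping-time argument with the elementary fact that a standard Brownian motion $B$ starting from $0$ visits $(-\infty,0)$ in every right-neighborhood of $0$ almost surely; equivalently, $\P(B_h \ge 0,\, \forall h \in [0,\delta]) = 0$ for every $\delta > 0$ (this follows from Blumenthal's $0$-$1$ law together with the law of the iterated logarithm at $0$).

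First I would dispose of the trivial sub-case $f_0 > 0$: then $W_0 - f_0 = -f_0 < 0$, so the infimum on $[0,t]$ is strictly negative and $A_t = \emptyset$. Assume henceforth $f_0 \le 0$, and introduce the stopping time $\tau := \inf\{s \ge 0 : W_s - f_s \le 0\}$. Since $W - f$ is cadlag on the compact interval $[0,t]$, its infimum is attained either as $(W-f)_{s^*}$ or as $(W-f)_{s^*-}$ at some $s^* \in [0,t]$. A short argument combining the continuity of $W$ with the monotonicity of $f$ then shows $A_t \subseteq \{\tau \le t\}$: in the left-limit case one gets $W_{s^*} = W_{s^*-} = f_{s^*-} \le f_{s^*}$, hence $\tau \le s^* \le t$, while in the right-value case $W_{s^*} = f_{s^*}$ gives $\tau \le s^* \le t$ directly.

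Next, on $\{\tau \le t\}$ I would split on whether $f$ jumps at $\tau$. If $W_\tau < f_\tau$, which can only happen when $f$ has a positive jump at $\tau$, then $\inf_{[0,t]}(W - f) < 0$ and $A_t$ is excluded. Otherwise $W_\tau = f_\tau$, and by the strong Markov property $B_h := W_{\tau+h} - W_\tau$ is a Brownian motion independent of $\mathcal{F}_\tau$, with
\b*
W_{\tau+h} - f_{\tau+h} &=& B_h - (f_{\tau+h} - f_\tau) \;\le\; B_h
\e*
for every $h \in [0, t-\tau]$, using the non-decreasingness of $f$. Consequently $A_t \cap \{\tau < t\}$ forces $B_h \ge 0$ throughout $[0, t-\tau]$, an event of zero conditional probability given $\mathcal{F}_\tau$ by the Brownian oscillation fact; integrating yields $\P(A_t \cap \{\tau < t\}) = 0$. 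The remaining piece $A_t \cap \{\tau = t\}$ is contained in $\{W_t = f_t\}$, which has probability zero since $W_t$ has a continuous distribution while $f_t$ is a fixed real number.

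I expect the only mildly delicate step to be the reduction $A_t \subseteq \{\tau \le t\}$ and the accompanying case analysis at a jump of $f$ coinciding with $\tau$: because $W - f$ is merely cadlag, one must treat separately the possibility that the infimum is reached as a left limit and use the continuity of $W$ together with the monotonicity of $f$ to rule it out. Everything else is a routine combination of the strong Markov property with a standard Brownian regularity fact.
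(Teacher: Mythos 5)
Your proof is correct and follows essentially the same route as the paper's: introduce the hitting time $\tau$ of $\{W\le f\}$, handle $A_t\cap\{\tau<t\}$ via the monotonicity of $f$, the strong Markov property and the fact that Brownian motion immediately visits $(-\infty,0)$, and reduce $A_t\cap\{\tau=t\}$ to the null event $\{W_t=f_t\}$. If anything, you are slightly more careful than the paper in justifying the reduction $A_t\subseteq\{\tau\le t\}$ through the attainment of the infimum of the c\`adl\`ag path $W-f$ as a value or a left limit.
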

\begin{proof}
Denote by $\tau:=\inf\{s\ge 0: W_s \le f_s\}$ the hitting time. Then it holds that 
\b* 
A_t= \Big(A_t\cap \{\tau<t\}\Big) \bigcup \Big(A_t\cap \{\tau=t\}\Big) = :B_t\cup C_t.
\e* 
Clearly $C_t\subset \{W_t=f_t\}$. 
As for $B_t$, we have
\b*
B_t &\subset& \{\tau<t \mbox{ and } W_{\tau}=f_{\tau} \mbox{ and } W_s \ge f_s \mbox { for } s\in [\tau, t] \} \\
&\subset& \{\tau<t \mbox{ and } W_{\tau}=f_{\tau} \mbox{ and } W_s \ge f_{\tau} \mbox { for } s\in [\tau, t] \}:=B_t',
\e* 
where the second inclusion holds as $f$ is non-decreasing. 
We conclude by the fact that there are infinite times $u\in [\tau, t]$ so that $W_u<W_{\tau}=f_{\tau}$.  
\end{proof}

\begin{lemma}\label{lem:continuity}
Let $(f^n)_{n\ge 1}\subset \Dc\cap\M$ be a sequence such that $n\mapsto f^n_t$ is non-decreasing for all $t\ge 0$. Denote by $f$ its pointwise limit. For every $x>0$, 
\b*
\lim_{n\to\infty}\mathbb P\left(\inf_{0\le s\le t}(x+W_s-f^n_s)\le 0\right) = \mathbb P\left(\inf_{0\le s\le t}(x+W_s-f_s)\le 0\right),
\e*
holds for all $t\ge 0$.  
\end{lemma}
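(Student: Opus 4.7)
The plan is to exploit the monotonicity of the sequence $f^n$ to reduce the claim to a monotone convergence of probability, and then to eliminate the boundary event $\{\inf = 0\}$ using Lemma \ref{lem:regularity}.

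For each $n$ set $E_n := \{\inf_{0\le s\le t}(x+W_s-f^n_s)\le 0\}$ and $E := \{\inf_{0\le s\le t}(x+W_s-f_s)\le 0\}$. Since $n\mapsto f^n_s$ is non-decreasing for every $s$, with pointwise limit $f_s$, we have $f^n_s\le f^{n+1}_s\le f_s$. Consequently $x+W_s-f^n_s\ge x+W_s-f^{n+1}_s\ge x+W_s-f_s$ for every $s$, which yields the inclusions $E_n\subseteq E_{n+1}\subseteq E$. By continuity of probability along monotone sequences,
\[
\lim_{n\to\infty}\mathbb P(E_n)=\mathbb P\Big(\bigcup_n E_n\Big)\le \mathbb P(E).
\]

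The reverse inequality reduces to showing $\mathbb P\big(E\setminus \bigcup_n E_n\big)=0$. Take $\omega\in E$. If $\inf_{0\le s\le t}(x+W_s-f_s)(\omega)<0$, then there exists a specific $s^*\in[0,t]$ with $x+W_{s^*}(\omega)-f_{s^*}<0$; since $f^n_{s^*}\uparrow f_{s^*}$, for all $n$ sufficiently large we have $x+W_{s^*}(\omega)-f^n_{s^*}<0$, so $\omega\in E_n$. Therefore
\[
E\setminus\bigcup_n E_n\subseteq \Big\{\inf_{0\le s\le t}\big(x+W_s-f_s\big)=0\Big\}.
\]
Since $f$ is the pointwise limit of non-decreasing functions $f^n$, the map $s\mapsto f_s$ is itself non-decreasing, and so is $g_s:=f_s-x$. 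Applying Lemma \ref{lem:regularity} to $g$ yields that $\{\inf_{0\le s\le t}(W_s-g_s)=0\}$ is negligible, which is precisely the boundary event above.

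The main obstacle I foresee is a subtle mismatch: Lemma \ref{lem:regularity} is stated for $f\in\Dc$ (càdlàg), whereas a pointwise limit of càdlàg monotone functions need not be càdlàg—it may have left-continuous jumps. However, inspecting the proof of Lemma \ref{lem:regularity}, the only structural input beyond non-decreasingness is the fact that Brownian motion oscillates with arbitrary sign excursions around any level it attains; this argument goes through verbatim for any non-decreasing $g$, regardless of càdlàg-regularity. If one prefers to stay strictly within the stated form of Lemma \ref{lem:regularity}, an alternative is to apply it to the right-continuous modification $\hat g_s:=g_{s+}$ and handle the (countable) set of jump points of $g$ separately, showing that the contribution from those points is also null because Brownian motion avoids any fixed countable set of (deterministic) levels at (deterministic) times almost surely.
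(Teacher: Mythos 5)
Your proof is correct in substance but takes a genuinely different route from the paper's. The paper works with the hitting times $\tau_n$ and $\tau$ of $W-f^n$ and $W-f$ at level $-x$, notes $\tau\le\tau_n$ by monotonicity, writes the discrepancy as $\P(\tau\le t,\ \tau_n>t)=\int_{(0,t]}\P(\tau_n>t\mid\tau=s)\,\P(\tau\in \d s)$, and kills the integrand by observing that on $\{\tau=s\}\cap\{\tau_n>t\}$ the value $W_s$ is trapped in the shrinking interval $(f^n_s-x,\,f_s-x]$ (the paper's display has $f$ and $f^n$ swapped, an evident typo), whose probability vanishes because the law of $W_s$ is atomless; dominated convergence finishes. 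You instead exploit the monotonicity of the events $E_n$ directly and reduce everything to the single boundary event $\{\inf_{0\le s\le t}(x+W_s-f_s)=0\}$, which you dispose of via Lemma \ref{lem:regularity}. Your route is arguably cleaner (no disintegration over the law of $\tau$, no conditional-probability estimate), and it puts Lemma \ref{lem:regularity} to work here, whereas the paper only invokes that lemma later, in Proposition \ref{prop:conv}.

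One caveat on your last step. The claim that the proof of Lemma \ref{lem:regularity} goes through ``verbatim'' for a non-c\`adl\`ag monotone $g$ is too quick: that proof decomposes $A_t$ into the pieces $\{\tau<t\}$ and $\{\tau=t\}$ and uses $W_\tau=f_\tau$ at the hitting time, and both steps use right-continuity; without it there is an extra scenario in which the infimum equals $0$ while $W$ never reaches $g$, the infimum being approached only from the right of a jump of $g$. Your fallback does work, though, and more simply than you describe: set $\hat g_s:=g_{s+}$ for $s<t$ and $\hat g_t:=g_t$; then by continuity of $W$, for $s<t$ one has $W_s-\hat g_s=\lim_{u\downarrow s}(W_u-g_u)\ge\inf_{0\le u\le t}(W_u-g_u)$, while $\hat g\ge g$ gives the reverse inequality, so $\inf_{0\le s\le t}(W_s-g_s)=\inf_{0\le s\le t}(W_s-\hat g_s)$ pathwise. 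Hence the boundary event is literally unchanged and Lemma \ref{lem:regularity} applies to $\hat g$ directly; no separate treatment of jump points is needed (and the justification you sketch for that treatment, that Brownian motion avoids fixed levels at fixed times, would not by itself control the a priori difference of the two events, which involves intervals $[g_{s},g_{s+})$ rather than single levels — if one does argue pointwise at jumps, the correct tool is the local oscillation of $W$ after the jump time, as in case (iii) of Lemma \ref{lem:usc}). Incidentally, the paper's own proof glosses over the same right-continuity issue (e.g.\ in identifying $\P(\inf_{0\le s\le t}(W_s-f_s)\le -x)$ with $\P(\tau\le t)$ and in asserting $W_s\le f_s-x$ on $\{\tau=s\}$), so your attention to it is a genuine plus.
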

\begin{proof}
Denote  by $\tau_n$ (resp. $\tau$) the first hitting time of $W-f^n$ (resp. $W-f$) at $-x$. Then 
\b* 
\mathbb P\left(\inf_{0\le s\le t}(W_s-f^n_s)\le -x\right)=\mathbb P(\tau_n\le t) &\mbox{and}& \mathbb P\left(\inf_{0\le s\le t}(W_s-f_s)\le -x\right)=\mathbb P(\tau\le t).
\e* 
By assumption, we have $\tau\le \tau_n$, and thus
\b* 
0\le \mathbb P(\tau\le t)-\mathbb P(\tau_n\le t) =\mathbb P(\tau\le t, \tau_n>t) = \int_{(0,t]}\mathbb P(\tau_n>t|\tau=s)\mathbb P(\tau\in \d s),
\e* 
where the last equality holds as $\tau>0$. Restricted on the set $\{\tau=s\}$ for $s\in (0, t]$, we get
\b*
\{\tau_n>t\}\subset \{W_s-f_s^n> -x\}\cap \{W_s-f_s\le -x\}.
\e*
This implies that
\b* 
\mathbb P(\tau_n>t|\tau=s) \le \mathbb P(f_s-x<W_s \le f_s^n-x)\to 0 \quad \mbox{as } n\to\infty,
\e* 
and we conclude by the dominated convergence theorem.
\end{proof}

Now we prove Theorem \ref{thm:reg}.

\begin{proof}[Proof of Theorem \ref{thm:reg}]
\rmi If the existence holds, then any solution is a strong solution. 
Let $X^{\eps}, Y^{\eps}$ be two arbitrary solutions of \eqref{dyn_reg}. 
Observe that $|X^{\eps}_t-Y^{\eps}_t|$ is deterministic and satisfies 
\b* 
|X^{\eps}_t-Y^{\eps}_t| \le C\mathbb E\left[\frac{1}{\eps}\int_0^t |(X^{\eps}_s)^--(Y^{\eps}_s)^-|\d s\right] \le \frac{C}{\eps}\int_0^t |X^{\eps}_s-Y^{\eps}_s|\d s,\quad \mbox{for all } t\ge 0,
\e* 
where $C>0$ denotes the Lipschitz constant of $G$. This
 yields the uniqueness by Gronwall's inequality. To show the existence, we adopt the proof of the fixed-point theorem. 
 Note that the operator $\Gamma^{\eps}$ preserves the monotonicity, i.e. $f\preceq g \Longrightarrow \Gamma^{\eps}[f]\preceq \Gamma^{\eps}[g]$. 
 Denote $\ell^0\equiv 0$ and set for $n\ge 1$, $\ell^n:=\Gamma^{\eps}[\ell^{n-1}]\in \Cc\cap \M$. 
 Then we get by induction $\ell^{n-1}\preceq  \ell^{n}$ for all $n\ge 1$. 
 Hence, $n\mapsto \ell^n_t$ is non-increasing for all $t\ge 0$, and the pointwise $\ell_t:=\lim_{n\to\infty}\ell^n_t\in [0,1]$ exists. 
 Finally, rewriting the equality
\b* 
\ell^{n+1}_t=G\left(1- \mathbb E\left[\exp\left(-\frac{1}{\eps}\int_0^t\big(Z+\beta s+ W_s-\alpha \ell^n_s\big)^-\d s\right)\right]\right),\quad \mbox{for all } t\ge 0, 
\e* 
 we conclude the desired existence by the dominated convergence theorem. 
 
\medskip

\noindent \rmii  Fix arbitrary $\delta<\eps$. We claim that $\ell^{\delta} \preceq  \ell^{\eps}$. Indeed, it is known from the proof above that
\b* 
\ell^{\delta} = \lim_{n\to\infty} \Gamma^{\delta,n}[\ell^0] &\mbox{and}& \ell^{\eps} = \lim_{n\to\infty} \Gamma^{\eps,n}n[\ell^0], 
\e* 
where $\Gamma^{\delta,n}$ (resp. $\Gamma^{\eps,n}$) is the $n-$composition of $\Gamma^{\delta}$ (resp. $\Gamma^{\eps}$). By definition, we have $\Gamma^{\eps}(f)\preceq  \Gamma^{\delta}(f)$ for all $f\in \Dc$. In particular, $\Gamma^{\eps}(\ell^0)\preceq \Gamma^{\delta}(\ell^0)$. 
Furthermore, we have
\b* 
\Gamma^{\eps,n+1}[\ell^0] = \Gamma^{\eps}\big[\Gamma^{\eps,n}[\ell^0]\big] \preceq \Gamma^{\eps}\big[\Gamma^{\delta,n}[\ell^0]\big]\preceq \Gamma^{\delta}\big[\Gamma^{\delta,n}[\ell^0]\big]=\Gamma^{\delta,n+1}[\ell^0], 
\e* 
where the second inequality follows from the induction and the monotonicity of $\Gamma^{\eps}$. 
Therefore, we get $\Gamma^{\eps,n}[\ell^0]\preceq \Gamma^{\delta,n}[\ell^0]$ for every $n\ge 1$, and $\ell^{\eps}\preceq \ell^{\delta}$. 
Namely, for each $t\ge 0$, $[0,1]\ni \eps \mapsto \ell^{\eps}_t\in [0,1]$ is non-increasing, and we can define the pointwise limit $\hat \ell_t:=\lim_{\eps\to 0+}\ell^{\eps}_t$. 
Thanks to Proposition \ref{prop:conv}, $\hat\ell$ is a solution to \eqref{dyn:lim} .

\begin{proposition}\label{prop:conv}
$\hat \ell$ is a solution to the McKean-Vlasov equation \eqref{dyn:lim}. 
\end{proposition}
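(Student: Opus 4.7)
The plan is to verify that $\hat\ell$ is a fixed point of the operator $\Gamma$ introduced in Section \ref{sc3}, namely $\hat\ell_t = \Gamma[\hat\ell]_t = \P(\tau^{\hat\ell}\le t)$ for every $t\ge 0$, where $X^{\hat\ell}_s := Z+\beta s+W_s - G(\hat\ell_s)$ and $\tau^{\hat\ell} := \inf\{s\ge 0:~X^{\hat\ell}_s\le 0\}$. Since $\eps \mapsto \ell^\eps_t$ is non-increasing (as just proved), picking $\eps_n \downarrow 0$ gives $\ell^{\eps_n}_s \uparrow \hat\ell_s$ for every $s\ge 0$; without loss of generality I identify $\hat\ell$ with its right-continuous modification, which lies in $\M$.

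The first step is a pathwise analysis of the integrand defining $\ell^\eps$. Set
\b*
Y^\eps := \int_0^t (X^{\ell^\eps}_s)^-\d s &\mbox{and}& Y^\ast := \int_0^t (X^{\hat\ell}_s)^-\d s.
\e*
The continuity of $G$ combined with the pointwise convergence $\ell^{\eps_n}_s\uparrow \hat\ell_s$ gives $X^{\ell^\eps}_s \to X^{\hat\ell}_s$ almost surely for each $s$. On $[0,t]$ the integrands are uniformly bounded by $|Z|+|\beta|t+\sup_{s\le t}|W_s|+G(1)<\infty$ a.s., so pathwise dominated convergence yields $Y^\eps\to Y^\ast$ almost surely. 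The monotonicities $\ell^\eps\preceq \hat\ell$ and $G$ non-decreasing further give $X^{\ell^\eps}\ge X^{\hat\ell}$ and hence $Y^\eps\le Y^\ast$ pointwise.

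Next I would identify the pointwise limit $\lim_{\eps\to 0+}\exp(-Y^\eps/\eps)$ almost surely. On $\{Y^\ast=0\}$ the inequality $Y^\eps\le Y^\ast$ forces $Y^\eps=0$, so $\exp(-Y^\eps/\eps)=1$; on $\{Y^\ast>0\}$ the convergence $Y^\eps\to Y^\ast>0$ implies $\exp(-Y^\eps/\eps)\le \exp(-Y^\ast/(2\eps))\to 0$ for $\eps$ small. Hence $\exp(-Y^\eps/\eps)\to \mathds{1}_{\{Y^\ast=0\}}$ almost surely, and bounded convergence combined with the definition of $\ell^\eps$ yields
\b*
\hat\ell_t &=& 1-\lim_{\eps\to 0+}\E\!\left[\exp(-Y^\eps/\eps)\right] = 1-\P(Y^\ast=0).
\e*

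The final step is to identify $1-\P(Y^\ast=0)$ with $\Gamma[\hat\ell]_t$. Since $X^{\hat\ell}$ is c\`adl\`ag, $\{Y^\ast=0\}$ coincides with $\{X^{\hat\ell}_s\ge 0,~\forall s\in[0,t]\}$, which contains $\{\tau^{\hat\ell}>t\}$, and the symmetric difference is contained in $\{\inf_{0\le s\le t}X^{\hat\ell}_s=0\}$. This residual event is $\P$-negligible by Lemma \ref{lem:regularity}, applied to $\tilde W_s := Z+\beta s+W_s$ and $f=G(\hat\ell)$; the extension from standard Brownian motion to $\tilde W$ goes through either via Girsanov (conditional on $Z$), or directly, since the local oscillation argument in the proof of Lemma \ref{lem:regularity} is unaffected by an absolutely continuous drift and a random starting point. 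Hence $\P(Y^\ast=0)=\P(\tau^{\hat\ell}>t)$ and $\hat\ell_t = \P(\tau^{\hat\ell}\le t) = \Gamma[\hat\ell]_t$ for every $t\ge 0$, which is exactly the statement that $\hat\ell$ solves \eqref{dyn:lim}. The main obstacle I see is this last identification: one must rule out ``touching without crossing'' zero for the possibly-discontinuous limit process $X^{\hat\ell}$, which is what forces the slight strengthening of Lemma \ref{lem:regularity} noted above.
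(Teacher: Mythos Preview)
Your argument is correct. Both your approach and the paper's rest on the ``no-touching'' Lemma~\ref{lem:regularity} (extended, as you note, via Girsanov to absorb the drift $\beta$ and the random starting point $Z$), but the limit $\eps\to 0$ is taken differently. The paper first shows $\big|\ell^\eps_t-\P(\inf_{s\le t}X^{\eps}_s\le 0)\big|\to 0$ by applying Lemma~\ref{lem:regularity} at the $\eps$-level, and then invokes Lemma~\ref{lem:continuity} to pass from $\P(\inf_{s\le t}X^{\eps}_s\le 0)$ to $\P(\inf_{s\le t}X^{\hat\ell}_s\le 0)$. You instead identify the almost-sure pointwise limit of $\exp(-Y^\eps/\eps)$ directly as $\mathds 1_{\{Y^\ast=0\}}$, using the monotonicity $Y^\eps\le Y^\ast$, and only then apply Lemma~\ref{lem:regularity} once, at the limit, to identify $\{Y^\ast=0\}$ with $\{\tau^{\hat\ell}>t\}$. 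Your route is a little more economical---it bypasses Lemma~\ref{lem:continuity} entirely---while the paper's two-step decomposition makes more explicit that $\ell^\eps_t$ is asymptotically the hitting probability $\Gamma[\ell^\eps]_t$, which connects the regularized and the fixed-point pictures.
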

\begin{proof}
Set $\hat X_t := Z+\beta t +W_t -\alpha\hat \ell_t$ for $t\ge 0$. 
By definition, we have
\b* 
X^{\eps}_t = Z + \beta t + W_t - G\left(1- \E\left[\exp\left(-\frac{1}{\eps}\int_0^t (X^{\eps}_s)^-\d s\right)\right]\right)
\e*
and 
\b* 
&&\left| X^{\eps}_t - \left(Z + \beta t + W_t - G\left( \P(\inf_{0\le s\le t}X^{\eps}_s\le 0) \right)\right) \right| \\
&= & \left|  G\left(1- \E\left[\exp\left(-\frac{1}{\eps}\int_0^t (X^{\eps}_s)^-\d s\right)\right]\right) - G\left( \E[\mathds{1}_{\{\inf_{0\le s\le t}X^{\eps}_s\le 0\}}]\right) \right|\\
 &\le& C\left|  1- \E\left[\exp\left(-\frac{1}{\eps}\int_0^t (X^{\eps}_s)^-\d s\right)\right] -  \E[\mathds{1}_{\{\inf_{0\le s\le t}X^{\eps}_s\le 0\}}] \right|\\
  &=& C\left| \E\left[\mathds{1}_{\{\inf_{0\le s\le t}X^{\eps}_s> 0\}}- \exp\left(-\frac{1}{\eps}\int_0^t (X^{\eps}_s)^-\d s\right)\right]  \right|\\
  &=&C\E\left[\exp\left(-\frac{1}{\eps}\int_0^t (X^{\eps}_s)^-\d s\right)\mathds{1}_{\{\inf_{0\le s\le t}X^{\eps}_s< 0\}}\right],
\e* 
where the last equality follows from Lemma \ref{lem:regularity}. Applying the dominated convergence theorem, we get
\b* 
\lim_{\eps\to 0+}\E\left[\exp\left(-\frac{1}{\eps}\int_0^t (X^{\eps}_s)^-\d s\right)\mathds{1}_{\{\inf_{0\le s\le t}X^{\eps}_s< 0\}}\right]=0
\e* 
and thus $\hat X_t = Z + \beta t + W_t - \alpha \lim_{\eps\to 0+}\P[\inf_{0\le s\le t}X^{\eps}_s\le 0]$, which yields  the desired result by Lemma \ref{lem:continuity}. 
\end{proof}

Recall that $\dLa$ is the minimal solution to \eqref{dyn:lim}. Then it suffices to show $\ell^{\eps}\preceq \dLa$ for every $\eps>0$. Recall that 
$\dLa=\lim_{n\to\infty}\Gamma_n[\ell^0]$.  
Again, we prove $\Gamma^{\eps}[f]\preceq \Gamma[f]$ for all $f\in \Dc$. Indeed, we rewrite 
\b* 
\Gamma^{\eps}[f]_t =  G\left(\E\left[1- \exp\left(-\frac{1}{\eps}\int_0^t (X^{f}_s)^-\d s\right)\right]\right) &\mbox{and}& \Gamma[f]_t =G\left(\E[\mathds{1}_{\{\inf_{0\le s\le t}X^f_s\le 0\}}]\right)
\e* 
and then compare the two random variables by distinguishing the following cases:
\begin{itemize}
\item On $\{\inf_{0\le s\le t}X^f_s\le  0\}$, it holds 
\b* 
1- \exp\left(-\frac{1}{\eps}\int_0^t (X^{f}_s)^-\d s\right) \le 1 = \mathds{1}_{\{\inf_{0\le s\le t}X^f_s\le 0\}};
\e* 
    \item On $\{\inf_{0\le s\le t}X^f_s>0\}$, it holds
    \b* 
1- \exp\left(-\frac{1}{\eps}\int_0^t (X^{f}_s)^-\d s\right) =0 = \mathds{1}_{\{\inf_{0\le s\le t}X^f_s\le 0\}}.
\e* 
\end{itemize}
This yields in particular $\Gamma^{\eps}[\ell^0]\preceq \Gamma[\ell^0]$. 
Then we have 
\b* 
\Gamma^{\eps,n+1}[\ell^0] = \Gamma^{\eps}\big[\Gamma^{\eps,n}[\ell^0]\big] \preceq \Gamma^{\eps}\big[\Gamma^{n}[\ell^0]\big]\preceq \Gamma\big[\Gamma^{n}[\ell^0]\big]=\Gamma^{n+1}[\ell^0], 
\e* 
where the second inequality follows from the induction and the monotonicity of $\Gamma^{\eps}$. 
Letting $n\to\infty$ and then $\delta\to 0$, we obtain $\hat\ell \preceq \dLa$, which implies that $\hat\ell=\dLa$ is the minimal solution of \eqref{dyn:lim}.
\end{proof}

\section{Proof of Theorem \ref{thm:allo}}
\label{sc5}

\quad In this section, we analyze the effect of budget control. 
Following the same lines as in the proof of Theorem \ref{thm:existence},
we can show that \eqref{dyn:allo_particle} has a unique minimal solution, which yields its wellposedness.
In what follows, we distinguish the cases $\beta<0$, $\beta=0$ and $\beta>0$. 

\subsection{Case of $\beta<0$}
\label{sc51}
\begin{proof}[Proof of Theorem  \ref{thm:allo} {\rm (i)}]
For each $\phi\in\Phi_N$, let $(X^{\phi, N,1},\ldots, X^{\phi, N,N}, L^{\phi,N})$ be an arbitrary solution to \eqref{dyn:allo_particle}. 
For each $t>0$, we have for $i=1,\ldots, N$,
\b*
\lbrace \tau^{\phi}_i > t \rbrace = \left\lbrace \tau^{\phi}_i > t, \, \int_0^t \phi^i_sds > -\frac {\beta t} 2 \right\rbrace  \bigcup \left\lbrace \tau^{\phi}_i > t, \,\int_0^t \phi^i_sds \le -\frac {\beta t} 2 \right\rbrace.
\e*
Hence, 
\b*
\sum_{i=1}^N {\mathds 1}_{\lbrace \tau^{\phi}_i > t \rbrace} &\le& \sum_{i=1}^N {\mathds 1}_{ \lbrace \tau^{\phi}_i > t, \, \int_0^t \phi^i_sds > -\frac {\beta t} 2 \rbrace }  + \sum_{i=1}^N {\mathds 1}_ {\lbrace \tau^{\phi}_i > t, \, \int_0^t \phi^i_sds \le- \frac {\beta t} 2 \rbrace}  \\
&\le&  -\frac{2}{\beta} + \sum_{i=1}^N {\mathds 1}_ {\lbrace \tau^{\phi}_i > t, \, \int_0^t \phi^i_sds \le -\frac {\beta t} 2 \rbrace}. 
\e*
On the event $\lbrace \tau^{\phi}_i > t, \, \int_0^t \phi^i_sds \le -\beta t/2\rbrace$, we get
\b*
X^{\phi,i}_t \le Z^i+\frac{\beta t}{2} +B^i_t,\quad \mbox{for all } t\ge 0,
\e*
and thus
\b*
\left\lbrace \tau^{\phi}_i > t,\int_0^t \phi^i_sds \le - \frac {\beta t} 2\right \rbrace  \subset \left\lbrace Z^i +B^i_t > -\frac {\beta t } 2 \right\rbrace.
\e*
Consequently,
\b*
\E[S^{\phi,N}_\infty]=\lim_{t\to\infty}\E\left[\sum_{i=1}^N {\mathds 1}_{\lbrace \tau^{\phi}_i > t \rbrace} \right] \le -\frac{2}{\beta} +\lim_{t\to\infty} N\P(Z+B_t>-\beta t/2) = -\frac{2}{\beta}.
\e*
\end{proof}

\subsection{Case of $\beta=0$}
\label{sc52}

\quad Theorem \ref{thm:allo} (ii) asserts that at criticality where $\beta = 0$,
the number of surviving banks scales as $\sqrt{N}$
if the initial capital levels $X^{N,i}_0$ are i.i.d. according to $\theta^N$, provided that
$\theta$ is compactly supported.
For simplicity, we assume that $\theta^N=\delta_1$ without loss of generality. 
Since the model \eqref{dyn:allo_particle} is stochastically bounded from above by the Up-the-River model 
via a natural coupling, 
the upper bound in \eqref{eq:infsup0} follows easily from \cite{TT18}.
The main task is to establish the lower bound, 
which boils down to a few lemmas.

Let $\tau^N_{(k)}$ be the first time that $k$ entities go bankrupt;
that is, $k$ members of $X^{N,1}, \ldots X^{N,N}$ enter $(-\infty, 0]$.
The following lemma implies that a significant fraction of banks being ruined
cannot occur in too short time.
\begin{lemma}
\label{lem:shorttime}
Let $\varepsilon > 0$ such that $\alpha \varepsilon < 1$,
and 
\begin{equation}
\label{eq:Taleps}
T_{\alpha, \varepsilon}: = \left(\frac{1 - \alpha \varepsilon}{\widetilde{\mathcal{N}}^{-1} \left(\frac{1}{2}\varepsilon (1 - \varepsilon)^{\frac{1 - \varepsilon}{\varepsilon}} \right)} \right)^2,
\end{equation}
where $\widetilde{\mathcal{N}}(x): = \mathbb{P}(B_1 > x)$ is the tail distribution of standard normal.
Then for $T < T_{\alpha, \varepsilon}$, 
there is $C > 0$ such that
\begin{equation}
\label{eq:taupoly}
\mathbb{P}\left(\tau^N_{(\varepsilon N)} < T\right) \le C N^{-k}, 
\end{equation}
for any $k> 0$, as $N \to \infty$.
\end{lemma}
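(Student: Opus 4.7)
The plan is to reduce $\P(\tau^N_{(\eps N)} < T)$ to a Binomial tail bound via a pathwise analysis of the default mechanism, and then apply a Chernoff-type estimate.

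\textit{Pathwise step.} On the event $\{\tau^N_{(\eps N)} < T\}$, at least $\lceil \eps N \rceil$ banks default by time $T$. For each defaulted bank $i$, evaluating the SDE in \eqref{dyn:allo_particle} at its default time gives
\[
0 \ge 1 + \int_0^{\tau_i}\phi^i_s\,ds + B^i_{\tau_i} - \alpha L^{\phi,N}_{\tau_i},
\]
which together with $\phi^i \ge 0$ reduces to $B^i_{\tau_i} \le \alpha L^{\phi,N}_{\tau_i} - 1$. For each of the $\lceil \eps N\rceil - 1$ banks defaulting strictly before $\sigma := \tau^N_{(\eps N)}$, one has $L^{\phi,N}_{\tau_i} \le L^{\phi,N}_{\sigma-} < \eps$, which forces $\inf_{t \le T} B^i_t \le -(1-\alpha\eps)$.

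\textit{Chernoff step.} The sub-level events $E_i := \{\inf_{t \le T} B^i_t \le -(1-\alpha\eps)\}$ depend only on the i.i.d.\ Brownian motions $B^i$ and not on the strategy $\phi$, so by the reflection principle $K := \sum_i \mathds{1}_{E_i}$ is $\mathrm{Binomial}(N, p)$ with $p = 2\widetilde{\mathcal{N}}((1-\alpha\eps)/\sqrt T)$. The definition \eqref{eq:Taleps} of $T_{\alpha,\eps}$ is calibrated so that $T < T_{\alpha,\eps}$ is equivalent to $p < \eps(1-\eps)^{(1-\eps)/\eps}$; by Stirling this is the sharp threshold making
\[
\binom{N}{\lceil \eps N\rceil} p^{\lceil \eps N\rceil} \;\le\; \mathrm{poly}(N)\cdot \Bigl(\tfrac{p}{\eps(1-\eps)^{(1-\eps)/\eps}}\Bigr)^{\eps N} \;=\; e^{-cN}
\]
for some $c = c(\alpha,\eps,T) > 0$. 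Combining with the pathwise step yields $\P(\tau^N_{(\eps N)} < T) \le \P(K \ge \lceil\eps N\rceil - 1) \le e^{-cN}$, which in particular is $O(N^{-k})$ for every $k$.

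\textit{Main obstacle.} The pathwise step as stated delivers only $K \ge \lceil \eps N\rceil - M$, where $M := N\Delta L^{\phi,N}_\sigma$ is the cascade size at $\sigma$; a large cascade could in principle defeat the naive count. The fix is the pre-cascade inequality $X^{\phi,N,i}_{\sigma-} \le \alpha M/N$ that each cascade-defaulting bank satisfies in the minimal solution, producing $B^i_\sigma \le \alpha L^{\phi,N}_\sigma - 1$ regardless of cascade size. Stratifying on $c := L^{\phi,N}_\sigma$ and applying the Chernoff bound of the previous step with the shifted threshold $-(1-\alpha c)$ on each stratum covers the range $c \in [\eps, 1/\alpha)$; the at most $O(N)$ strata contribute only a polynomial factor, absorbed by the exponential decay. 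The residual range $c \ge 1/\alpha$ (relevant when $\alpha \ge 1$) must be treated directly by bounding the probability of a large pre-cascade crowd of banks reaching near-zero capital at $\sigma-$, which is once more a Binomial tail for the i.i.d.\ motions $B^i$.
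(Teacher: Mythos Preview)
The core of your approach---comparing $\{\tau^N_{(\varepsilon N)} < T\}$ to a binomial tail for the i.i.d.\ events $\{\inf_{s\le T} B^i_s \le -(1-\alpha\varepsilon)\}$ and then applying a Chernoff/Stirling estimate---matches the paper's proof. The paper states this more cleanly as a coupling: setting $\widetilde X^{N,i}_t := (1-\alpha\varepsilon) + B^i_t$, one has $X^{N,i}_t \ge \widetilde X^{N,i}_t$ on $[0,\tau^N_{(\varepsilon N)})$, whence $\P(\tau^N_{(\varepsilon N)} < T) \le \P(\widetilde\tau^N_{(\varepsilon N)} < T)$; a union bound over $\binom{N}{\varepsilon N}$ subsets and the reflection principle then give exactly your exponent. (You treat the controlled system, but the lemma and its use in the paper concern $\phi \equiv 0$; the extra generality costs nothing.)

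You are right that the cascade at $\sigma = \tau^N_{(\varepsilon N)}$ deserves a word, since $L^N_t \le \varepsilon$ can fail at $t = \sigma$. But your proposed fix---stratifying on $c = L^N_\sigma$ and rerunning the Chernoff bound with threshold $-(1-\alpha c)$ on each stratum---does not close the gap. As $c \uparrow 1/\alpha$ the success probability $p(c) = 2\widetilde{\mathcal N}\big((1-\alpha c)/\sqrt T\big)$ tends to $1$, so for $c$ near $1/\alpha$ you lose $p(c) < c$ and the binomial tail $\P(\mathrm{Bin}(N,p(c)) \ge cN)$ no longer decays; the situation on your ``residual range'' $c \ge 1/\alpha$ is worse still, and your sketch there is too vague to repair it.

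The correct and much simpler observation is that in the minimal solution the cascade at $\sigma$ can be \emph{ordered}: if $m_0 = N L^N_{\sigma-}$ and $i_1, i_2, \ldots$ are the cascade-defaulting banks sorted by increasing pre-cascade capital, then the $j$-th satisfies $X^{N,i_j}_{\sigma-} \le \alpha (j-1)/N$, hence $B^{i_j}_\sigma \le \alpha(m_0 + j - 1)/N - 1$. Thus the first $\lceil \varepsilon N\rceil$ defaults \emph{overall} (the $m_0$ pre-$\sigma$ defaults plus the first $\lceil \varepsilon N\rceil - m_0$ cascade banks) all satisfy $\inf_{s\le T} B^i_s \le -(1-\alpha\varepsilon)$, with no stratification needed. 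This gives $K \ge \lceil\varepsilon N\rceil$ directly, and your Chernoff step then applies verbatim.
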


\begin{proof}
Let $\widetilde{X}^{N,i}_t: = (1 - \alpha \varepsilon) + B^i_t$ be driven by the same Brownian motion as $X^{N,i}$,
and $\widetilde{\tau}^N_{(k)}$ be the $k^{th}$ hitting time of $\{\widetilde{X}^{N,1}, \ldots,\widetilde{X}^{N,N}\}$ to $0$.
By the coupling of $\widetilde{X}^N$ and $X^N$, we get for 
$0 \le t \le \tau^N_{(\varepsilon N)}$,
\begin{equation*}
\frac{1}{N} \sum_{k = 1}^N 1_{\{\tau^N_k \le t\}} \le \varepsilon \quad \mbox{and} \quad
X^{N,i}_t = 1 + B^i_t - \frac{\alpha}{N} \sum_{k = 1}^N 1_{\{\tau^N_k \le t\}} \ge \widetilde{X}^{N,i}_t.
\end{equation*}
Consequently, 
\begin{equation}
\label{eq:couplingtau}
\mathbb{P}\left(\tau^N_{(\varepsilon N)} < T\right) \le \mathbb{P}\left(\widetilde{\tau}^N_{(\varepsilon N)} < T\right).
\end{equation}
We proceed to bounding $\mathbb{P}\left(\widetilde{\tau}^N_{(\varepsilon N)} < T\right)$.
Let $\mathcal{S}_\varepsilon$ be the set of $(\varepsilon N)$-tuples of $\{1, \ldots, N\}$,
so $\# \mathcal{S}_\varepsilon = \binom{N}{\varepsilon N}$
We have
\begin{align}
\label{eq:boundtiltau}
\mathbb{P}\left(\widetilde{\tau}^N_{(\varepsilon N)} < T\right) 
& = \mathbb{P}\left(\bigcup_{S \in \mathcal{S}_\varepsilon} \bigg\{ \max_{k \in S} \widetilde{\tau}^N_k < T \bigg\} \right) \notag \\
& \le \binom{N}{\varepsilon N} \mathbb{P}\left( \max_{k \le \varepsilon N} \widetilde{\tau}^N_k < T \right)
 = \binom{N}{\varepsilon N} \mathbb{P}\left(\widetilde{\tau}^N_1 < T \right)^{\varepsilon N},
\end{align}
where the second inequality is by the union bound, and the third equality follows from the fact that $\widetilde{X}^{N,i}$ are i.i.d.
Note that 
\begin{align*}
\mathbb{P}\left(\widetilde{\tau}^N_1 < T \right) &= \mathbb{P}\left(\inf_{0 \le t \le T} B^i_t \le \alpha \varepsilon - 1 \right)  = \mathbb{P}(|B_T| > 1 - \alpha \varepsilon) = 2 \widetilde{\mathcal{N}}\left(\frac{1 - \alpha \varepsilon}{\sqrt{T}} \right),
\end{align*}
where the second equality is by the reflection principle $\inf_{0 \le t \le T} B^i_t \stackrel{d}{=} -|B^i_T|$ (see \cite[Section 2.8A]{KS91}).
As a result,
\begin{equation}
\label{eq:decoupleprod}
\mathbb{P}\left(\widetilde{\tau}^N_1 < T \right)^{\varepsilon N} =
\exp \left( N \varepsilon \log \bigg(2 \widetilde{\mathcal{N}}\left(\frac{1 - \alpha \varepsilon}{\sqrt{T}} \right)\bigg)\right).
\end{equation}
Moreover,
\begin{equation}
\label{eq:binomasymp}
\binom{N}{\varepsilon N} \sim \exp\left(N (-\varepsilon \log \varepsilon - (1 - \varepsilon) \log(1 - \varepsilon)) \right), \quad \mbox{as } N \to \infty,
\end{equation}
where we only focus on the exponential, and neglect the polynomial factor of $N$ in \eqref{eq:binomasymp}.
Combining \eqref{eq:boundtiltau}, \eqref{eq:decoupleprod} and \eqref{eq:binomasymp} yields
\begin{equation}
\label{eq:asymptiltau}
\mathbb{P}\left(\widetilde{\tau}^N_{(\varepsilon N)} < T\right)
\lesssim \exp\left( N h_{\alpha, \varepsilon}(T) \right),
\end{equation}
where 
\begin{equation}
\label{eq:h}
h_{\alpha, \varepsilon}(T):= \varepsilon \log \bigg(2 \widetilde{\mathcal{N}}\left(\frac{1 - \alpha \varepsilon}{\sqrt{T}} \right)\bigg)- \varepsilon \log \varepsilon - (1 - \varepsilon) \log(1 - \varepsilon).
\end{equation}
Observe that $T \to h_{\alpha, \varepsilon}(T)$ is increasing on $(0, \infty)$, 
and 
\begin{equation*}
\lim_{T \to 0}h_{\alpha, \varepsilon}(T) = -\infty, \quad
\lim_{T \to \infty}h_{\alpha, \varepsilon}(T) = -\varepsilon \log \varepsilon - (1 - \varepsilon) \log(1 - \varepsilon) > 0,
\end{equation*}
so the unique root of $h_{\alpha, \varepsilon}(T) = 0$ on $(0, \infty)$ is $T = T_{\alpha, \varepsilon}$ defined by \eqref{eq:Taleps}.
Therefore, $h_{\alpha, \varepsilon}(T) < 0$ for $T < T_{\alpha, \varepsilon}$, 
and the bound \eqref{eq:taupoly} follows from \eqref{eq:couplingtau} and \eqref{eq:asymptiltau}.
\end{proof}

The next lemma shows that many (most) of the banks surviving up to time  $T_{\alpha, \varepsilon}$ have suitably large capital levels.
\begin{lemma}
\label{lem:numberlow}
Let $\delta > 0$, and let
\begin{equation}
Z_{\alpha, \varepsilon, \delta}:= \#\left\{k: X^{N,k}_{T_{\alpha, \varepsilon}} \ge \alpha + \delta \mbox{ and } X^{N,k}_t > 0 \mbox{ for } t \le T_{\alpha, \varepsilon} \right\},
\end{equation}
be the number of surviving banks whose final capital levels are above $\alpha + \delta$ at time $T_{\alpha, \varepsilon}$.
Let
 \begin{equation}
 \label{eq:paed}
 p_{\alpha, \, \varepsilon, \delta}:= 
 \widetilde{\mathcal{N}}\left( \left(\frac{\alpha + \delta}{1 - \alpha \varepsilon} - 1 \right) \widetilde{\mathcal{N}}^{-1}\left(\frac{1}{2} \varepsilon (1 - \varepsilon)^{\frac{1 - \varepsilon}{\varepsilon}} \right)\right).
 \end{equation}
Then we have
\begin{equation}
\label{eq:eZlb}
\mathbb{E} Z_{\alpha, \varepsilon, \delta} \ge (1 - \varepsilon)p_{\alpha, \varepsilon. \delta} N,
\end{equation}
and for $\lambda < (1 - \varepsilon)p_{\alpha, \varepsilon, \delta}$,
\begin{equation}
\label{eq:pblb}
\mathbb{P}(Z_{\alpha, \varepsilon, \delta} > \lambda N) \ge 1 - C N^{-k},
\end{equation}
for any $k > 0$, as $N \to \infty$.
\end{lemma}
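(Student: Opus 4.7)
The plan is to couple the interacting particle system with a tractable system of i.i.d.\ drifted Brownian motions by invoking Lemma \ref{lem:shorttime}. Recall the auxiliary process $\widetilde X^{N,i}_t:=(1-\alpha\varepsilon)+B^i_t$ from the proof of that lemma. On the event $E:=\{\tau^N_{(\varepsilon N)}\ge T_{\alpha,\varepsilon}\}$ we have $L^N_t\le \varepsilon$ for $t\le T_{\alpha,\varepsilon}$, hence $X^{N,i}_t\ge \widetilde X^{N,i}_t$ throughout this interval, and by Lemma \ref{lem:shorttime} $\mathbb{P}(E^c)\le CN^{-k}$ for any $k>0$ as $N\to\infty$. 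Setting $\widetilde\tau^N_i:=\inf\{t\ge 0:\widetilde X^{N,i}_t\le 0\}$ and
\[
\widetilde Z:=\#\{i:\widetilde X^{N,i}_{T_{\alpha,\varepsilon}}\ge \alpha+\delta,\ \widetilde\tau^N_i>T_{\alpha,\varepsilon}\},
\]
the pathwise bound yields $Z_{\alpha,\varepsilon,\delta}\ge \widetilde Z$ on $E$, and $\widetilde Z$ is a sum of $N$ i.i.d.\ Bernoulli random variables of common parameter $\widetilde p:=\mathbb{P}\big(\widetilde X^{N,1}_{T_{\alpha,\varepsilon}}\ge \alpha+\delta,\ \widetilde\tau^N_1>T_{\alpha,\varepsilon}\big)$.

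The central step is the lower bound $\widetilde p\ge (1-\varepsilon) p_{\alpha,\varepsilon,\delta}$. By the reflection principle, writing $u:=(1-\alpha\varepsilon)/\sqrt{T_{\alpha,\varepsilon}}$ and $v:=(\alpha+\delta)/\sqrt{T_{\alpha,\varepsilon}}$,
\[
\widetilde p\;=\;\widetilde{\mathcal N}(v-u)-\widetilde{\mathcal N}(v+u)\;=\;p_{\alpha,\varepsilon,\delta}-\widetilde{\mathcal N}(v+u),
\]
with $u=\widetilde{\mathcal N}^{-1}(q)$ and $q:=\tfrac{1}{2}\varepsilon(1-\varepsilon)^{(1-\varepsilon)/\varepsilon}$ by \eqref{eq:Taleps}. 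It therefore suffices to show $\widetilde{\mathcal N}(v+u)\le \varepsilon\,\widetilde{\mathcal N}(v-u)$. To this end, the map $v\mapsto \widetilde{\mathcal N}(v+u)/\widetilde{\mathcal N}(v-u)$ is decreasing on $\R$: differentiating reduces monotonicity to the fact that the hazard rate $\phi/\widetilde{\mathcal N}$ of the standard normal is increasing, a consequence of the log-concavity of $\phi$. Since $v\ge 0$, the ratio is majorized by its value at $v=0$, namely $q/(1-q)$; the elementary inequality $(1+\varepsilon)(1-\varepsilon)^{(1-\varepsilon)/\varepsilon}<2$ for $\varepsilon\in(0,1)$ (verified by checking that $\varepsilon\mapsto \log(1+\varepsilon)+\varepsilon^{-1}(1-\varepsilon)\log(1-\varepsilon)$ is increasing with limit $\log 2$ at $\varepsilon=1$) is equivalent to $q/(1-q)<\varepsilon$, yielding $\widetilde p\ge (1-\varepsilon) p_{\alpha,\varepsilon,\delta}$.

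Assembling the ingredients, for \eqref{eq:eZlb}
\[
\E[Z_{\alpha,\varepsilon,\delta}]\;\ge\;\E[\widetilde Z\,\mathds{1}_E]\;\ge\;N\widetilde p-N\P(E^c)\;\ge\;(1-\varepsilon)p_{\alpha,\varepsilon,\delta}N-CN^{1-k},
\]
and choosing $k$ large enough renders the remainder negligible. For \eqref{eq:pblb}, Hoeffding's inequality applied to the i.i.d.\ Bernoulli sum $\widetilde Z$ gives, for any $\lambda<(1-\varepsilon)p_{\alpha,\varepsilon,\delta}\le \widetilde p$,
\[
\P(\widetilde Z\le \lambda N)\;\le\;\exp\!\big(-2N(\widetilde p-\lambda)^2\big),
\]
so that $\P(Z_{\alpha,\varepsilon,\delta}>\lambda N)\ge \P(\{\widetilde Z>\lambda N\}\cap E)\ge 1-e^{-cN}-CN^{-k}\ge 1-C'N^{-k}$.

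The hard part will be the sharp tail comparison in the second paragraph: a naive accounting argument of the form ``at most $\varepsilon N$ defaulters among the $\widetilde R:=\#\{i:\widetilde X^{N,i}_{T_{\alpha,\varepsilon}}\ge \alpha+\delta\}$ rich banks'' only yields $\E[Z_{\alpha,\varepsilon,\delta}]\ge (p_{\alpha,\varepsilon,\delta}-\varepsilon)N$, which is strictly weaker than the claimed $(1-\varepsilon)p_{\alpha,\varepsilon,\delta}N$ whenever $p_{\alpha,\varepsilon,\delta}<1$; upgrading to the sharper bound requires both the precise calibration of $T_{\alpha,\varepsilon}$ through the root of $h_{\alpha,\varepsilon}$ and the monotonicity of the normal hazard rate exploited above.
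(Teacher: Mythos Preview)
Your argument is correct, and it follows a genuinely different route from the paper's. Both proofs start by restricting to the high-probability event $E=\Omega_{\alpha,\varepsilon}=\{\tau^N_{(\varepsilon N)}\ge T_{\alpha,\varepsilon}\}$ and use the coupling $X^{N,i}\ge\widetilde X^{N,i}$ on $[0,T_{\alpha,\varepsilon}]$. After that the two diverge.

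The paper separates the two constraints: on $\Omega_{\alpha,\varepsilon}$ at least $(1-\varepsilon)N$ banks survive in $X$, and each survivor's terminal level is stochastically at least $(1-\alpha\varepsilon)+B_{T_{\alpha,\varepsilon}}$ (invoking a stochastic comparison result, citing \cite{Day83}). This yields directly that $(Z_{\alpha,\varepsilon,\delta}\mid\Omega_{\alpha,\varepsilon})$ dominates $\mathrm{Bin}((1-\varepsilon)N,p')$ with $p'=\mathbb P(B_{T_{\alpha,\varepsilon}}\ge \alpha(1+\varepsilon)+\delta-1)$; a one-line computation then shows $p'=p_{\alpha,\varepsilon,\delta}$, so the factor $(1-\varepsilon)$ comes from the count of survivors and $p_{\alpha,\varepsilon,\delta}$ from the terminal condition, with no further analytic inequality needed. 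The Chernoff bound gives \eqref{eq:pblb}.

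You instead keep the constraints together on the auxiliary process, so that $\widetilde Z$ is a bona fide i.i.d.\ Bernoulli sum with parameter $\widetilde p=\widetilde{\mathcal N}(v-u)-\widetilde{\mathcal N}(v+u)$, cleanly sidestepping the conditioning issue that the paper hands off to \cite{Day83}. The price is that you must manufacture the factor $(1-\varepsilon)$ analytically, which you do via the Gaussian tail-ratio inequality $\widetilde{\mathcal N}(v+u)/\widetilde{\mathcal N}(v-u)\le q/(1-q)<\varepsilon$. Your hazard-rate monotonicity argument for the first inequality is correct (log-concavity of $\phi$ suffices), and your verification that $q/(1-q)<\varepsilon$ is equivalent to $(1+\varepsilon)(1-\varepsilon)^{(1-\varepsilon)/\varepsilon}<2$ is also fine. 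In short, the paper's proof is shorter because the product $(1-\varepsilon)\cdot p_{\alpha,\varepsilon,\delta}$ arises structurally from the Binomial parameters, whereas your proof is more self-contained at the cost of the extra tail comparison; both reach the same conclusions with the same asymptotically negligible error in \eqref{eq:eZlb}.
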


\begin{proof}
Let $\Omega_{\alpha, \varepsilon}:= \left\{\tau^N_{(\varepsilon n)} > T_{\alpha, \varepsilon}\right\}$.
 By Lemma \ref{lem:shorttime}, we get $\mathbb{P}(\Omega^c_{\alpha, \varepsilon}) \lesssim N^{-2}$.
Then we have
\begin{equation}
\label{eq:Zbound}
\begin{aligned}
\mathbb{E}Z_{\alpha, \varepsilon, \delta} 
& =  \mathbb{E}\left(Z_{\alpha, \varepsilon, \delta} 1_{\Omega_{\alpha, \varepsilon}} \right) + \mathbb{E}\left(Z_{\alpha, \varepsilon, \delta} 1_{\Omega^c_{\alpha, \varepsilon}} \right)  \\
&  =\mathbb{E}\left(Z_{\alpha, \varepsilon, \delta} \,|\, \Omega_{\alpha, \varepsilon}\right)\, ( 1- \mathbb{P}(\Omega^c_{\alpha, \varepsilon})) + \mathbb{E}\left(Z_{\alpha, \varepsilon, \delta} 1_{\Omega^c_{\alpha, \varepsilon}} \right)   \\
&  = \mathbb{E}\left(Z_{\alpha, \varepsilon, \delta} \,|\, \Omega_{\alpha, \varepsilon}\right) +  \mathcal{O}(N \cdot N^{-2}) \\
& = \mathbb{E}\left(Z_{\alpha, \varepsilon, \delta} \,|\, \Omega_{\alpha, \varepsilon}\right) + \mathcal{O}(N^{-1}),
\end{aligned}
\end{equation}
where we use the fact that $Z_{\alpha, \varepsilon, \delta} \le N$ and $\mathbb{P}(\Omega^c_{\alpha, \varepsilon}) \lesssim N^{-2}$ in the third equation.
On the event $\Omega_{\alpha, \varepsilon}$, 
there are at least $(1 - \varepsilon) N$ surviving banks,
the capital levels of which are stochastically larger than
Brownian motion starting at $1 - \alpha \varepsilon$ (see \cite{Day83}).
By an obvious coupling (to Brownian particles without control $\phi$),
$(Z_{\alpha, \varepsilon, \delta} \,|\, \Omega_{\alpha, \varepsilon})$ is stochastically larger than $\bin((1 - \varepsilon) N, p_{\alpha, \, \varepsilon, \delta})$,
where
\begin{equation}
\label{eq:comppae}
 p_{\alpha, \, \varepsilon, \delta}:= \mathbb{P}\left(B_{T_{\alpha, \varepsilon}} \ge \alpha(1 + \varepsilon) + \delta - 1 \right)
= \widetilde{\mathcal{N}}\left(\frac{\alpha(1 + \varepsilon) + \delta - 1}{\sqrt{T_{\alpha, \varepsilon}}} \right).
\end{equation}
Recall the definition of $T_{\alpha, \varepsilon}$ from \eqref{eq:Taleps},
and the expression \eqref{eq:comppae} simplifies to the right side of \eqref{eq:paed}.
Combining with \eqref{eq:Zbound} yields the lower bound \eqref{eq:eZlb}.
The bound \eqref{eq:pblb} follows from the stochastic comparison and the Chernorff bound.
\end{proof}
Now we prove Theorem \ref{thm:allo} \rmii.
\begin{proof}[Proof of Theorem \ref{thm:allo} {\rm (ii)}]
The difference between \eqref{dyn:allo_particle} and the Up-the-River model is that 
there is an additional term $-\frac{\alpha}{N} \sum_{k = 1}^N 1_{\{\tau^N_k \le t\}}$,
which drags the capital level down. 
Under the natural coupling of the model \eqref{dyn:allo_particle} and the Up-the-River model,
if a bank survives using a control in the model \eqref{dyn:allo_particle}, 
it will also survive using the same control in the Up-the-River model.
This implies 
\begin{equation*}
\limsup_{N \to \infty} \frac{\dS^{\phi,N}_\infty}{\sqrt{N}} \le \limsup_{N \to \infty} \frac{S^{\tiny \mbox{UR}}_N}{\sqrt{N}},
\end{equation*}
where $S^{\tiny \mbox{UR}}_N$ denotes the maximum number of surviving banks in the Up-the-River model.
By \cite[Theorem 1.2]{TT18}, we have
$\lim_{N \to \infty}  \frac{S^{\tiny \mbox{UR}}_N}{\sqrt{N}} = 4 / \sqrt{\pi}$,
and hence the upper bound in \eqref{eq:infsup0}. 
Now we prove the lower bound.
Fix $m > 0$. We use the following control strategy:
\begin{itemize}
\item
For each bank, exercise no control until it reaches the capital level $N/m$.
\item
If the bank does reach the capital level $N/m$, and is the one of the first $m$ banks achieving so,
then exercise the control $1/m$ thereafter.
\end{itemize}
We write $\dS^{\phi,N}\equiv S_N$ without any danger of confusion.  
For $\lambda < (1 - \varepsilon)p_{\alpha, \varepsilon, \delta}$,
let $\Omega_{\alpha, \varepsilon, \delta, \lambda}: = \{Z_{\alpha, \varepsilon, \delta} > \lambda N\}$.
By Lemma \ref{lem:numberlow}, we have $\mathbb{P}(\Omega_{\alpha, \varepsilon, \delta, \lambda}) \ge 1 - N^{-2}$.
On the event $\Omega_{\alpha, \varepsilon, \delta, \lambda}$, 
there are (at least) $\lambda N$ banks whose capital levels are above $\alpha + \delta$ at time $T_{\alpha, \varepsilon}$.
The capital levels of these banks after $T_{\alpha, \varepsilon}$ are stochastically larger than 
Brownian motion starting at $\delta$.
Recall from \cite[Theorem 7.5.3]{Durrett19},
\begin{equation*}
\mathbb{P}\left(\delta + B_t \mbox{ hits } \frac{N}{m} \mbox{ before } 0\right) = \frac{\delta m}{N}.
\end{equation*}
As a result, the number of banks whose capital levels reach $K/m$ is bounded from below 
by 
$$\min\left(m, \bin(\lambda N, \, \frac{\delta m}{N})\right).$$
Also note that (\cite[Exercise 7.5.2]{Durrett19})
\begin{equation*}
\mathbb{P}\left(\frac{N}{m} - \alpha + B_t + \frac{t}{m} \mbox{ does not hit } 0 \right) = 1 - \exp\left(- \frac{2}{m} \left(\frac{N}{m} - \alpha \right) \right).
\end{equation*}
Therefore,
\begin{align*}
\mathbb{E}S_N & \ge \left[1 -  \exp\left(- \frac{2}{m} \left(\frac{N}{m} - \alpha \right) \right) \right]\mathbb{E}\left[ \min\left(m, \bin \bigg(\lambda N, \, \frac{\delta m}{N} \bigg)\right)\right] + \mathcal{O}(N^{-1}) \\
& \stackrel{m = \theta \sqrt{N}}{=} \min(1, \lambda \delta) \,  \theta \left(1 - \exp\left(-\frac{2}{\theta^2} \right) \right) \sqrt{N} + o(1).
\end{align*}
By taking $\lambda \uparrow (1 - \varepsilon)p_{\alpha, \varepsilon, \delta}$, we get
\begin{equation}
\label{eq:keylb}
\liminf_{N \to \infty} \frac{\mathbb{E} S_N}{\sqrt{N}} \ge 
\min\left(1, (1 - \varepsilon) \delta p_{\alpha, \varepsilon, \delta}\right)   \sup_\theta \left\{ \theta \left( 1 - \exp\left(-\frac{2}{\theta^2} \right) \right)\right\}.
\end{equation}
Note that $\sup_\theta \left\{ \theta \left( 1 - \exp\left(-\frac{2}{\theta^2} \right) \right)\right\} \approx 0.9$, which is attained at $\theta \approx 1.26$.
Further by taking $\varepsilon = \min\left(\frac{1}{2}, \frac{1}{2 \alpha}\right)$ and $\delta = \alpha$ yields 
\begin{equation}
\label{eq:infsup0}
c_\alpha \le \liminf_{N \to \infty} \frac{\mathbb{E}S_N}{\sqrt{N}} \le \limsup_{N \to \infty} \frac{\mathbb{E}S_N}{\sqrt{N}} \le \frac{4}{\sqrt{\pi}},
\end{equation}
where $c_{\alpha} \approx 0.9 \min(1, \rho_\alpha)$ with
\begin{equation}
\rho_\alpha:= \left\{ \begin{array}{ccl}
\left(\alpha - \frac{1}{2} \right) \widetilde{\mathcal{N}}\left( (4 \alpha - 1) \widetilde{\mathcal{N}}^{-1}\left(\frac{1}{4 \alpha} \left(1 - \frac{1}{2 \alpha} \right)^{2 \alpha - 1} \right) \right)  & \mbox{for} & \alpha > 1 \\ 
\frac{1}{2} \alpha \widetilde{\mathcal{N}}\left( \frac{5 \alpha -2}{2 - \alpha} \, \widetilde{\mathcal{N}}^{-1}(1/8) \right) & \mbox{for} & \alpha \le 1.
\end{array}\right.
\end{equation}
\end{proof}

Note that the lower and upper bounds appearing in the proof above may not be optimal. 
It is easy to see from the proof that the lower bound can be numerically improved to
$c'_\alpha \approx 0.9 \min(1, \rho'_\alpha)$ with
\begin{equation*}
\rho'_\alpha:= \sup_{\varepsilon, \delta} \left\{(1 - \varepsilon) \delta p_{\alpha, \varepsilon, \delta} \right\}.
\end{equation*}
But it does not seem that $\rho'_\alpha$ is easily simplified, and has a closed-form expression. 
It is also possible to refine the stochastic comparisons in Lemmas \ref{lem:shorttime} and \ref{lem:numberlow} to further improve
the bounds in \eqref{eq:infsup0}.

\subsection{Case of $\beta>0$}
\label{sc53}

\quad Finally, we consider the positive economy where the capital level of each bank has a positive drift $\beta >0$:
\begin{equation}
\label{eq:positivehit}
X^{N,i}_t = X^{N,i}_0 + \beta t + B_t^i - \frac{\alpha}{N} \sum_{k = 1}^N 1_{\{\tau^N_k \le t\}}.
\end{equation}
The question is whether the budget control is helpful,
and what is the optimal control to maximize the number of surviving banks.
As previously mentioned, 
this problem is challenging. 

To provide insights, we focus on the setting without interaction, i.e. $\alpha=0$. 
In this case, the equations
\begin{equation}
\label{eq:positiveSDE}
X_t^{\phi, N, i}= X^{N,i}_0 + \beta t + B_t^i + \int_0^t \phi_s^{N,i} ds,
\end{equation}
have a unique solution,
and we look for 
\begin{equation*}
S_N = \max_\phi S^{\phi, N} : = \max_{\phi} \sum_{k = 1}^N 1_{\{\tau^{\phi, N}_k = \infty\}}.
\end{equation*}
where $\tau^{\phi, N}_k:=\inf\{t>0: X^{\phi, N,i}_t \le 0\}$.
For $\phi = 0$, we have $X^{0, N,i}_t = X_0^{N,i} + \beta t + B^i_t$, which are i.i.d. Brownian motion with drift $\beta > 0$.
By \cite[Exercise 7.5.2]{Durrett19},
\begin{equation}
\label{eq:Xnocontrol}
\mathbb{P}(X^{N,i}_0 + \beta t + B^i_t \mbox{ does not hit } 0) = 1 - \E \left[e^{-2 \beta X^{N,1}_0}\right].
\end{equation}
Thus, $\mathbb{E}S^{0,N} = \left( 1 - \E \left[e^{-2 \beta X^{N,i}_0}\right] \right) N$. 
We aim to prove that in the supercritical case where $\beta > 0$,
the number of surviving banks scales as $N$,
and moreover, the unit control does not help.
That is, for any $\phi \in \Phi_N$, 
\begin{equation*}
\lim_{N \to \infty} \frac{\mathbb{E}S^{\phi, N}}{N} = \lim_{N \to \infty} \frac{\mathbb{E}S^{0, N}}{N} = 1-\E \left[e^{-2 \beta X^{N,1}_0}\right].
\end{equation*}
To this end, we need the following lemma.

\begin{lemma}
\label{lem:Greenpos}
Let $Y^x_t = x + \beta t + B_t$, and $\Psi(t,x): = \mathbb{P}\left(\inf_{s \le t} Y^x_s > 0\right)$.
Then
\begin{enumerate}
\item
$\Psi(t,x)$ solves the PDE 
\begin{equation}
\label{eq:IBPDE}
\frac{\partial \Psi}{\partial t}  = \beta \frac{\partial \Psi}{\partial x}  + \frac{1}{2} \frac{\partial^2 \Psi}{\partial t^2}, \quad
\Psi(t, 0) = 0, \quad  \Psi(0,x) = 1_{\{x>0\}}.
\end{equation}
\item
We have
\begin{equation}
\label{eq:absorbpos}
\Psi(t,x) = \Phi\left( \frac{x + \beta t}{\sqrt{t}}\right) - e^{-2 \beta x} \Phi\left( \frac{-x + \beta t}{\sqrt{t}}\right),
\end{equation}
where $\Phi(x) = \frac{1}{\sqrt{2 \pi}} \int_{-\infty}^x e^{-\frac{z^2}{2}} dz$ 
is the cumulative distribution function of standard normal.
\end{enumerate}
\end{lemma}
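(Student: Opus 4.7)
Both parts are classical facts about drifted Brownian motion absorbed at $0$. My plan is to establish the explicit formula (ii) first by a Girsanov/reflection computation, and then deduce (i) by direct verification from the formula.

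\emph{Part (ii).} The plan is to remove the drift via Girsanov and apply the reflection principle. Define an equivalent measure $\mathbb{Q}$ on $\mathcal{F}_t$ by
\begin{equation*}
\left(\frac{d\mathbb{Q}}{d\mathbb{P}}\right)_t = \exp\!\left(-\beta B_t - \tfrac{1}{2}\beta^2 t\right).
\end{equation*}
Under $\mathbb{Q}$, the process $W_s := B_s + \beta s$ is a standard Brownian motion on $[0,t]$, so $Y^x_s = x + W_s$ and
\begin{equation*}
\Psi(t,x) = \mathbb{E}_{\mathbb{Q}}\!\left[\exp\!\left(\beta W_t - \tfrac{1}{2}\beta^2 t\right) \mathbf{1}_{\{\min_{s\le t} W_s > -x\}}\right].
\end{equation*}
Next, I would invoke the classical joint density coming from the reflection principle (see e.g. \cite[Section 2.8A]{KS91}),
\begin{equation*}
\mathbb{Q}\!\left(W_t \in dy,\ \min_{s \le t} W_s > -x\right) = \frac{1}{\sqrt{2\pi t}}\!\left(e^{-y^2/(2t)} - e^{-(y+2x)^2/(2t)}\right)dy, \qquad y>-x,
\end{equation*}
and compute the two resulting Gaussian integrals by completing the square. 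A direct calculation gives $\Phi((x+\beta t)/\sqrt{t})$ for the first integral, while the substitution $u = y + 2x$ in the second extracts a factor $e^{-2\beta x}$ and yields $\Phi((\beta t - x)/\sqrt{t})$; subtracting the two produces \eqref{eq:absorbpos}.

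\emph{Part (i).} Once \eqref{eq:absorbpos} is in hand, the PDE follows by direct differentiation: each summand is smooth on $(0,\infty)\times\R_+$, and a short calculation confirms that both $\Phi((x+\beta t)/\sqrt{t})$ and $e^{-2\beta x}\,\Phi((\beta t - x)/\sqrt{t})$ satisfy the backward Kolmogorov equation $\partial_t u = \beta\,\partial_x u + \tfrac{1}{2}\partial_{xx} u$. The boundary value $\Psi(t,0) = \Phi(\beta\sqrt{t}) - \Phi(\beta\sqrt{t}) = 0$ is immediate, and the initial condition $\Psi(0,x) = \mathbf{1}_{\{x>0\}}$ follows from the Gaussian tails as $t\downarrow 0$. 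An intrinsic alternative, which avoids relying on the explicit formula, is to combine the strong Markov identity $\Psi(t+h, x) = \mathbb{E}[\Psi(t, Y^x_h)\,\mathbf{1}_{\{\inf_{s\le h} Y^x_s > 0\}}]$ with an Itô expansion as $h\downarrow 0$; path continuity of $Y^x$ supplies the absorbing boundary condition.

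\emph{Main difficulty.} There is no substantive obstacle here; this is a textbook application of Girsanov plus reflection. The only care required is in the Gaussian bookkeeping in part (ii), making sure that the two terms assemble cleanly into the stated form and that the shift $u = y+2x$ indeed produces precisely the factor $e^{-2\beta x}$. The $\mathbb{Q}$-integrability of the integrand $\exp(\beta W_t - \tfrac{1}{2}\beta^2 t)\mathbf{1}_{\{\min W > -x\}}$ is automatic since this exponential is a genuine Radon--Nikodym density.
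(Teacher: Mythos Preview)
Your proposal is correct. You derive the explicit formula first via Girsanov and the reflection principle, and then read off the PDE and the boundary/initial data by direct differentiation; this is a clean, self-contained route and the Gaussian bookkeeping you sketch checks out. The paper proceeds in the opposite order and with less detail: it identifies $\Psi(t,x)=\P(T_x>t)$ as the Green function of the absorbed drifted Brownian motion and invokes Feynman--Kac to obtain the PDE, then for the formula simply notes that one can verify \eqref{eq:absorbpos} against the PDE and cites standard references. Your approach buys an actual derivation of the formula (rather than verification or citation), at the cost of a short Girsanov/completing-the-square computation; the paper's approach is terser but leans on outside sources. Either is perfectly adequate for this auxiliary lemma.
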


\begin{proof}
(1) Let $T_x: = \inf\{t>0: Y^x_t = 0\}$ be the first time at which $Y^x$ hits $0$. 
By duality, we have
\begin{equation*}
\Psi(t,x):= \mathbb{P}\left(\inf_{s \le t} Y^x_s > 0\right) = \mathbb{P}(T_x > t),
\end{equation*}
which is the Green function of $Y^x$ absorbed at $0$. 
It follows from the classical Feynman–Kac formula (see \cite[Section 5.4]{Gard04}) that 
$\Psi$ solves the initial-boundary problem \eqref{eq:IBPDE}.

(2) It is easy to check that the formula \eqref{eq:absorbpos} satisfies the PDE \eqref{eq:IBPDE}.
It can also be read from \cite{BW76}, \cite[Example 5.5]{CM65}.
\end{proof}

\begin{proof}[Proof of Theorem \ref{thm:allo} {\rm (iii)}]
First by the natural coupling, we have for any control $\phi \in \Phi_N$,
\begin{equation*}
\mathbb{E}S^{\phi, N} \ge \mathbb{E}S^{0, N} = \left(1-\E \left[e^{-2 \beta X^{N,1}_0}\right] \right) N.
\end{equation*}
This implies that $\liminf_{N \to \infty} \mathbb{E}S_N/N \ge 1-\E \left[e^{-2 \beta X^{N,1}_0}\right]$. Recall the definition of $\Psi(t,x)$ in Lemma \ref{lem:Greenpos}.
Let $Z_s = \sum_{i = 1}^N \Psi(t-s, X_s^{\phi, N, i})$, $s \le t$. 
Applying It\^o's formula, we get
\begin{equation}
\label{eq:ItoZ}
\begin{aligned}
Z_t & = N \Psi(t,X^{N,i}_0) 
+ \sum_{i = 1}^N \int_0^t \left(- \frac{\partial}{\partial t} + \beta \frac{\partial}{\partial x} + \frac{1}{2} \frac{\partial^2}{\partial x^2} \right) \Psi(t-s, X_s^{\phi, N,i}) ds \\
& \qquad \qquad \qquad + \int_0^t \sum_{i = 1}^N \phi^{N,i}(s) \frac{\partial}{\partial x}\Psi(t-s, X_s^{\phi, N,i})  ds + \mbox{martingale} \\
& = N \Psi(t,X^{N,i}_0) + \int_0^t \sum_{i = 1}^N \phi^{N,i}(s) \frac{\partial}{\partial x}\Psi(t-s, X_s^{\phi, N,i})  ds + \mbox{martingale},
\end{aligned}
\end{equation}
where the second equality follows from \eqref{eq:IBPDE}. 
Next by \eqref{eq:absorbpos}, we get
\begin{equation}
\label{eq:Psix}
\frac{\partial \Psi}{\partial x}(t,x) = p(t,x + \beta t) + e^{-2 \beta x} p(t,x - \beta t) + 2 \beta e^{-2 \beta x} \Phi\left(\frac{-x + \beta t}{\sqrt{t}} \right),
\end{equation}
where $p(t,x): = \frac{1}{\sqrt{2 \pi t}} \exp\left(- \frac{x^2}{2t} \right)$ is the heat kernel. 
Observe that for $x \ge 0$, 
\begin{equation}
\label{eq:estterms}
p(t, x+ \beta t) \lesssim t^{-\frac{1}{2}} e^{-\beta^2 t}, \quad
e^{-2 \beta x} p(t,x - \beta t) \lesssim t^{-\frac{1}{2}} e^{-\beta^2 t}, \quad
e^{-2 \beta x} \Phi\left(\frac{-x + \beta t}{\sqrt{t}} \right) \le 1.
\end{equation}
By \eqref{eq:Psix} and \eqref{eq:estterms}, we get
\begin{equation}
\label{eq:contphi}
\int_0^t \sum_{i = 1}^N \phi^{N,i}(s) \frac{\partial}{\partial x}\Psi(t-s, X_s^{\phi, N,i})  ds 
\le C \int_0^t \frac{e^{-\beta^2 (t-s)}}{\sqrt{t -s}} ds + 2 \beta t \le C' + 2 \beta t.
\end{equation}
Combining \eqref{eq:ItoZ} and \eqref{eq:contphi} yields
$\mathbb{E}S_N  \le \mathbb{E}Z_t \le N\E \Psi(t,X^{N,1}_0) + 2 \beta t + C'$ for any $t > 0$.
Fixing $\varepsilon > 0$ and taking $t = \frac{\varepsilon}{2 \beta} N$, we get
\begin{equation}
\label{eq:limsupeps}
\limsup_{N \to \infty} \frac{\mathbb{E}S_N}{N} \le \lim_{N \to \infty} \E\Psi\left(\frac{\varepsilon N}{2 \beta}, X^{N,1}_0\right) + \varepsilon
= 1-\E \left[e^{-2 \beta X^{N,1}_0}\right] + \varepsilon.
\end{equation}
Since \eqref{eq:limsupeps} holds for any $\varepsilon > 0$, we conclude that $\limsup_{N \to \infty} \mathbb{E}S_N/N \le 1-\E \left[e^{-2 \beta X^{N,1}_0}\right]$.
\end{proof}

\bibliographystyle{abbrv}
\bibliography{unique} 

\begin{thebibliography}{10}

\bibitem{APPR17}
V.~V. Acharya, L.~H. Pedersen, T.~Philippon, and M.~Richardson.
\newblock Measuring systemic risk.
\newblock {\em Rev. Financ. Stud.}, 30(1):2--47, 2017.

\bibitem{Aldous02}
D.~Aldous.
\newblock ``{U}p the {R}iver'' game story.
\newblock 2002.
\newblock Available at
  http://www.stat.berkeley.edu/~aldous/Research/OP/river.pdf.
  
\bibitem{ACM13}
H.~Amini, R.~Cont, and A.~Minca.
\newblock Resilience to contagion in financial networks.
\newblock {\em Mathematical Finance}, 26(2):329--365, 2016.  

\bibitem{AMS13}
H.~Amini, A.~Minca, and A.~Sulem.
\newblock Optimal equity infusions in interbank networks. 
\newblock {\em Journal of Financial Stability}, 31:1--17, 2017. 
  
\bibitem{BGGGS12}
S.~Battiston, D.~Gatti, M.~Gallegati, B.~Greenwald, and J.~E.~Stiglitz.
\newblock Liaisons dangereuses: Increasing connectivity, risk sharing, and systemic risk.
\newblock {\em Journal of Economic Dynamics and Control}, 36(8):1121--1141, 2012.
  
\bibitem{BGTZ20}
E.~Bayraktar, G.~Guo, W.~Tang, and Y.~P. Zhang.
\newblock {M}ckean-{V}lasov equations involving hitting times: blow-ups and
  global solvability.
\newblock  To appear in  {\em Ann. Appl. Probab.} (arXiv:2010.14646).

\bibitem{BW76}
P.~G. Buckholtz and M.~T. Wasan.
\newblock Characterization of certain first passage processes.
\newblock {\em Sankhy\={a} Ser. A}, 38(4):326--339, 1976.

\bibitem{CLY16}
N. Chen, X. Liu, and D. Yao
\newblock An optimization view of financial systemic risk modeling: network effect and market liquidity effect.
\newblock {\em Operations Research}, 64(5):1089--1108, 2016.

\bibitem{CCS2020}
C.~Cuchiero, C.~Reisinger, and S.~Rigger.
\newblock Optimal bailout strategies resulting from the drift controlled supercooled Stefan problem.
\newblock  To appear in {\em Ann Oper Res} (	arXiv:2111.01783).

\bibitem{CFS15}
R.~Carmona, J.-P. Fouque, and L.-H. Sun.
\newblock Mean field games and systemic risk.
\newblock {\em Commun. Math. Sci.}, 13(4):911--933, 2015.

\bibitem{CIM13}
C.~Chen, G.~Iyengar, and C.~C. Moallemi.
\newblock An axiomatic approach to systemic risk.
\newblock {\em Manag. Sci.}, 59(6):1373--1388, 2013.


\bibitem{CM65}
D.~R. Cox and H.~D. Miller.
\newblock {\em The theory of stochastic processes}.
\newblock John Wiley \& Sons, Inc., New York, 1965.

\bibitem{CRS2020}
C.~Cuchiero, S.~Rigger, and S.~Svaluto-Ferro.
\newblock Propagation of minimality in the supercooled {S}tefan problem.
\newblock \newblock {\em Ann. Appl. Probab.}, 33(2):1588--1618, 2023 .

\bibitem{Day83}
M.~V. Day.
\newblock Comparison results for diffusions conditioned on positivity.
\newblock {\em J. Appl. Probab.}, 20(4):766--777, 1983.

\bibitem{DIRT2015a}
F.~Delarue, J.~Inglis, S.~Rubenthaler, and E.~Tanr\'{e}.
\newblock Global solvability of a networked integrate-and-fire model of
  {M}c{K}ean-{V}lasov type.
\newblock {\em Ann. Appl. Probab.}, 25(4):2096--2133, 2015.

\bibitem{DIRT2015b}
F.~Delarue, J.~Inglis, S.~Rubenthaler, and E.~Tanr\'{e}.
\newblock Particle systems with a singular mean-field self-excitation.
  {A}pplication to neuronal networks.
\newblock {\em Stochastic Process. Appl.}, 125(6):2451--2492, 2015.

\bibitem{DNS2019}
F.~Delarue, S.~Nadtochiy, and M.~Shkolnikov.
\newblock Global solutions to the supercooled {S}tefan problem with blow-ups:
  regularity and uniqueness.
\newblock {\em Probab. Math. Phys.}, 3(1):171--213, 2022.

\bibitem{Durrett19}
R.~Durrett.
\newblock {\em Probability---theory and examples}, volume~49 of {\em Cambridge
  Series in Statistical and Probabilistic Mathematics}.
\newblock Cambridge University Press, 2019.
\newblock Fifth edition.

\bibitem{FL13}
J.-P. Fouque and J.~A. Langsam, editors.
\newblock {\em Handbook on systemic risk}.
\newblock Cambridge University Press, Cambridge, 2013.

\bibitem{Gard04}
C.~W. Gardiner.
\newblock {\em Handbook of stochastic methods for physics, chemistry and the
  natural sciences}, volume~13 of {\em Springer Series in Synergetics}.
\newblock Springer-Verlag, Berlin, third edition, 2004.

\bibitem{GSSS15}
K.~Giesecke, K.~Spiliopoulos, R.~B. Sowers, and J.~A. Sirignano.
\newblock Large portfolio asymptotics for loss from default.
\newblock {\em Math. Finance}, 25(1):77--114, 2015.

\bibitem{Glass16}
P.~Glasserman and H.~P. Young.
\newblock Contagion in financial networks.
\newblock {\em J. Econ. Lit.}, 54(3):779--831, 2016.

\bibitem{HLS2019}
B.~Hambly, S.~Ledger, and A.~S\o~jmark.
\newblock A {M}c{K}ean-{V}lasov equation with positive feedback and blow-ups.
\newblock {\em Ann. Appl. Probab.}, 29(4):2338--2373, 2019.

\bibitem{HG20}
E.~M. Hoover and F.~Giarratani.
\newblock An {I}ntroduction to {R}egional {E}conomics.
\newblock 2020.
\newblock Available at
  \url{https://researchrepository.wvu.edu/rri-web-book/4/}.

\bibitem{Kallenberg02}
O.~Kallenberg.
\newblock {\em Foundations of modern probability}.
\newblock Probability and its Applications (New York). Springer-Verlag, New
  York, second edition, 2002.

\bibitem{KS91}
I.~Karatzas and S.~E. Shreve.
\newblock {\em Brownian motion and stochastic calculus}, volume 113 of {\em
  Graduate Texts in Mathematics}.
\newblock Springer-Verlag, New York, second edition, 1991.

\bibitem{LBS09}
J.~Lorenz, S.~Battiston, and F.~Schweitzer.
\newblock Systemic risk in a unifying framework for cascading processes on
  networks.
\newblock {\em Eur. Phys. J. B}, 71(4):441--460, 2009.

\bibitem{Massart90}
P.~Massart.
\newblock The tight constant in the {D}voretzky-{K}iefer-{W}olfowitz
  inequality.
\newblock {\em Ann. Probab.}, 18(3):1269--1283, 1990.

\bibitem{MA10}
R.~M. May and N.~Arinaminpathy.
\newblock Systemic risk: the dynamics of model banking systems.
\newblock {\em J. R. Soc. Interface}, 7(46):823--838, 2010.

\bibitem{NS2019}
S.~Nadtochiy and M.~Shkolnikov.
\newblock Particle systems with singular interaction through hitting times:
  application in systemic risk modeling.
\newblock {\em Ann. Appl. Probab.}, 29(1):89--129, 2019.

\bibitem{Sznit91}
A.-S. Sznitman.
\newblock Topics in propagation of chaos.
\newblock In {\em \'{E}cole d'\'{E}t\'{e} de {P}robabilit\'{e}s de
  {S}aint-{F}lour {XIX}---1989}, volume 1464 of {\em Lecture Notes in Math.},
  pages 165--251. Springer, Berlin, 1991.

\bibitem{TT18}
W.~Tang and L.-C. Tsai.
\newblock Optimal surviving strategy for drifted {B}rownian motions with
  absorption.
\newblock {\em Ann. Probab.}, 46(3):1597--1650, 2018.

\bibitem{Whitt2002}
W.~Whitt.
\newblock {\em Stochastic-process limits: an introduction to stochastic-process
  limits and their application to queues}.
\newblock Springer Series in Operations Research and Financial Engineering.
  Springer-Verlag New York, 2002.

\end{thebibliography}

\end{document}